\documentclass[12pt]{article}

\usepackage[english]{babel}
\usepackage[margin=30mm,paperwidth=216mm, paperheight=303mm]{geometry}
\usepackage{fullpage}
\usepackage{parskip}
\usepackage{graphicx}
\usepackage{amssymb}
\usepackage{amsmath}
\usepackage{amsthm}
\usepackage{mathrsfs}
\usepackage{stmaryrd}
\usepackage{thmtools}
\usepackage{hyperref}
\usepackage[shortlabels]{enumitem}
\usepackage[backend = bibtex, maxnames=5]{biblatex}
\usepackage{tikz}
\usepackage{tikz-cd}
\tikzcdset{every label/.append style = {font = \normalsize}}
\usepackage{float}
\usepackage{diagbox}
\usepackage{subfig}
\usepackage[export]{adjustbox}
\usepackage{bm}

\newcommand{\Z}{\mathbb{Z}}

\newcommand{\N}{\mathbb{N}}

\newtheorem{tx}{Theorem}[section]
\newtheorem{tc}{Corollary}[tx]
\newtheorem{tl}[tx]{Lemma}
\newtheorem{tp}[tx]{Proposition}
\newtheorem{tcj}[tx]{Conjecture}
\theoremstyle{definition}
\newtheorem{td}[tx]{Definition}
\newtheorem*{te}{Example}

\newtheorem*{tr}{Remark}

\bibliography{ref}

\newcommand{\Change}[1]{\color{black}#1\color{black}}
\usepackage{authblk}

\begin{document}
\title{On Freiman's Theorem in a function field setting}
\author{Mieke Wessel}
\date{}

	\maketitle
\begin{abstract}
\noindent We prove some new instances of a conjecture of Bachoc, Couvreur and Z\'emor that generalizes Freiman's $3k-4$ Theorem to a multiplicative version in a function field setting. As a consequence we find that if $F$ is a rational function field over an algebraically closed field $K$ and $S \subset F$ a finite dimensional $K$-vector space such that $\dim S^2 = 2\dim S + 1$, then the conjecture holds.
\end{abstract}
\section{Introduction}
One of the goals of additive combinatorics is to show that a set $A \subset \Z$ with some form of additive structure has extra structure. For example, $A$ can contain many $3$-arithmetic progressions, or most elements in $A + A := \{a_1 + a_2 \mid a_1, a_2 \in A\}$ have many distinct pairs $(a_1, a_2) \in A^2$ representing them. This idea of linking discrete structures to each other is not unique to additive combinatorics and in particular we want to find analogues of theorems in additive combinatorics in the following function field setting.

Let $F$ be a function field over some algebraically closed field of constants $K$ and let $S$ be a finite dimensional $K$-vector space that is contained in $F$. If we require $S$ to have some \emph{multiplicative structure}, what does this say about other (structural) properties of $S$? 

\Change{Our current focus will be on generalizing }so-called inverse problems in additive combinatorics, which try to find the structure of some finite set $A \subset \Z$ when $|A+A|$ is small. \Change{A solution to this problem } is given by Freiman's Theorem, which gives bounds for $|A+A|$ such that $A$ is always contained in an $r$-dimensional arithmetic progression of bounded size. The relatively simple case $r = 1$ is known as Freiman's $3k-4$ theorem. 
\begin{tx}[Freiman's $3k-4$ Theorem \cite{freiman1973foundations}] \label{T3k4}
	Let $A \subset \Z$ such that $|A+A| \leq 3|A| - 4$. Then $A$ is contained in an arithmetic progression of length at most $|A+A| - |A| + 1$. 
\end{tx}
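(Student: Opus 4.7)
My plan is to reduce to a normalized problem and then argue by induction on $k = |A|$.

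First I would normalize: by translating, assume $\min A = 0$, and by dividing by $\gcd A$, assume $\gcd A = 1$. Both operations preserve $|A|$ and $|A+A|$, and since $\gcd A = 1$ forces any AP containing $A$ to have common difference $1$, the smallest containing AP has length $\max A + 1$. Writing $A = \{0 = a_1 < a_2 < \cdots < a_k = n\}$, the theorem reduces to the inequality $n \leq |A+A| - k$.

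Next I would isolate the basic sumset structure. Since $0, n \in A$, both $A \subseteq A + A$ and $n + A \subseteq A + A$; these two chains meet only at $\{n\}$, giving $|A + A| \geq 2k - 1$. Setting $E := (A + A) \setminus (A \cup (n + A))$, the goal becomes $|E| \geq n + 1 - k$, i.e., at least one ``extra'' sum per gap of $A$ in $[1, n-1]$. A natural approach is to construct an explicit injection from gaps to extras: for each $b \in [1, n-1] \setminus A$, exhibit a sum $a_i + a_j \in E$ depending on $b$, injectively.

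The induction on $k$ would proceed as follows. The base cases $k \leq 4$ follow from direct enumeration, using $|A+A| \geq 2k - 1$ to narrow the possibilities. For the inductive step, I would peel off an extremal element, say $a_k$, and set $A' = A \setminus \{a_k\}$. If $|A' + A'| \leq 3(k-1) - 4$, the inductive hypothesis places $A'$ in a short AP, and a short argument about the position of $a_k$ relative to this AP extends the conclusion to $A$. Otherwise removing $a_k$ loses relatively few elements of the sumset, meaning the sums $\{a_k + a_i\}_i$ are ``almost all'' distinct from $A' + A'$, which is itself a strong structural constraint on $A$ near its top.

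The main obstacle I expect is precisely this latter case: when neither peeling $a_1$ nor $a_k$ reduces $|A+A|$ enough for induction, the extremal sums $a_k + a_i$ and $a_1 + a_i$ are forced to lie in essentially disjoint parts of $A + A$, pinning down the structure of $A$ near both endpoints. Executing the gap-to-extra injection in this constrained regime will require careful case analysis to ensure injectivity, particularly for gaps $b$ lying close to $a_2$ or to $a_{k-1}$. I expect either Freiman's original covering argument or a Lev-style chain decomposition to close this final step.
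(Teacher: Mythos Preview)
The paper does not prove this theorem; it is quoted from Freiman's original work as a classical result and used as a black box throughout (for instance to bound the degrees of basis elements in Section~2 and repeatedly in Sections~3 and~4). There is therefore no proof in the paper against which to compare your proposal.

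On its own merits: your normalization and the reduction to showing $\max A \leq |A+A| - |A|$ when $\gcd A = 1$ and $\min A = 0$ are standard and correct, and the induction-by-peeling strategy is one known route to the result. However, your sketch stops precisely at the substantive step --- the case where removing neither endpoint drops $|A+A|$ enough to invoke the inductive hypothesis --- and defers to ``Freiman's original covering argument or a Lev-style chain decomposition'' without carrying either out. That case is the entire content of the theorem; everything preceding it is bookkeeping. So what you have written is a correct outline of a known argument, not a proof: the gap you yourself flag is the part that requires real work, and it is not supplied here.
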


We are interested in the structure that $S$ can have when the dimension of $S^2 := \{s_1s_2 \mid s_1, s_2 \in S\}$, the set of products of elements from $S$, is small. \Change{The meaning of small is expressed by the combinatorial genus of $S$.}
\begin{td}
	The \textit{combinatorial genus} of $S$ is defined as
	$$\gamma := \dim S^2 - 2\dim S + 1.$$
\end{td}

In \cite{bachoc2018funcfieldfreiman} Bachoc, Couvreur and Z\'emour state a conjecture about how to generalize Freiman's $3k-4$ Theorem to the function field setting.

\begin{tcj} \label{CJfff}
	Let $K$ ben an algebraically closed field and let $F$ be an extension field of $K$. Let $S$ be a $K$-subspace of finite dimension in $F$ such that $K \subset S$. Suppose the combinatorial genus $\gamma$ of $S$ is smaller or equal than $\dim S - 3$. Then the genus $g$ of the field $K(S)$ satisfies $g \leq \gamma$ and there exists a Riemann-Roch space $L(D)$ that contains $S$ and such that $\dim L(D) \leq \dim S + \gamma - g$. 
\end{tcj}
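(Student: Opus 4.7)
The plan is to mirror Freiman's $3k-4$ Theorem in the algebraic-geometric setting, with Riemann-Roch spaces playing the role of arithmetic progressions and valuation filtrations at closed points playing the role of the underlying integer sumsets. First I would normalize: replacing $S$ with $s_0^{-1}S$ for any nonzero $s_0 \in S$ preserves both $\dim S$ and $\dim S^2$, and ensures $1 \in S$. Let $X$ denote the smooth projective model of $K(S)$, of genus $g$, and define the pole divisor $D := \sum_P n_P \cdot P$, where $n_P := \max\bigl(0,\ -v_P(s) : 0 \neq s \in S\bigr)$. By construction $S \subseteq L(D)$ and $S^2 \subseteq L(2D)$, so the task reduces to bounding $\dim L(D)$.

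The conjecture splits into two claims: (a) $g \leq \gamma$, and (b) $\dim L(D) \leq \dim S + \gamma - g$. For (a), I would invoke a Castelnuovo-type lower bound $\dim S^2 \geq 2\dim S - 1 + g$, valid for $K$-subspaces generating a function field of genus $g$ under mild dimension hypotheses; rearranging gives $g \leq \gamma$ at once. For (b), Riemann-Roch on $X$ translates the claim into $\deg D \leq \dim S + \gamma - 1$, valid once $\deg D \geq 2g - 1$ (which part (a) together with the hypothesis $\gamma \leq \dim S - 3$ guarantees). I would attack (b) via a local-to-global analysis. At each closed point $P$, the valuation set $V_P := \{v_P(s) : 0 \neq s \in S\}$ has exactly $\dim S$ elements, and because $v_P(s_1 s_2) = v_P(s_1) + v_P(s_2)$, the analogous valuation set for $S^2$ contains the sumset $V_P + V_P$. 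The classical inequality $|V_P + V_P| \geq 2|V_P| - 1$, combined with the global identity $\dim S^2 = 2\dim S - 1 + \gamma$, should force $V_P$ to be an arithmetic progression at all but finitely many points; the few exceptional points are then shown to contribute only $O(\gamma)$ to $\deg D$, yielding the bound.

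The main obstacle is step (b) in full generality. Turning the local Freiman-type analysis at each $P$ into a sharp global bound on $\deg D$ requires both a function field analog of Theorem \ref{T3k4} applied to the individual sumsets $V_P + V_P$, and a careful Riemann-Roch-based identity coupling the local defects $\delta_P := |V_P + V_P| - (2\dim S - 1)$ across all $P$ in the support of $D$. In the rational case $F = K(t)$ we have $g = 0$, all cohomological obstructions vanish, and $\dim L(D) = \deg D + 1$ holds unconditionally, so the problem reduces to accounting for pole orders at finitely many points of $\mathbb{P}^1$; under the additional restriction $\dim S^2 = 2\dim S + 1$ (so $\gamma = 2$) the local structure at each point is very constrained, which is presumably the route by which the paper verifies the conjecture in that case.
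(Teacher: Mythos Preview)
The statement you are attempting is Conjecture \ref{CJfff}, which the paper does \emph{not} prove in general. It is presented as an open conjecture of Bachoc, Couvreur and Z\'emor; the paper only establishes the special instances where $g=0$ and the filtration genera $(\gamma_i)$ take values in $\{0,\gamma\}$ or $\{0,1,\gamma\}$ (Theorem \ref{Tmain}), yielding in particular the case $g=0$, $\gamma=2$ (Corollary \ref{Tgamma2}). So there is no proof in the paper against which to compare your proposal.

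Your outline also has a genuine gap, which you yourself flag: step (b). Applying Freiman's $3k-4$ Theorem to the valuation set $V_P$ at a single place bounds the length of an arithmetic progression containing $V_P$, but not its common difference, so it does not directly bound $n_P = -\min V_P$. More importantly, the inequality $|V_P + V_P| \leq \dim S^2$ holds separately at each place, and nothing in your sketch couples the local defects $\delta_P$ across distinct places into a bound on $\deg D = \sum_P n_P$; the assertion that the exceptional points ``contribute only $O(\gamma)$ to $\deg D$'' is precisely the content of the conjecture and is left unargued. For contrast, the paper's method for the cases it does handle is entirely different from your local-to-global count: it fixes a single valuation $v_\infty$, runs the natural filtration $S_1 \subset \cdots \subset S_n$, uses the explicit bases of Theorem \ref{Tknownbasis} to pin down $S_i$ whenever $\gamma_i \leq 1$, and then performs a detailed case analysis on the jump $\gamma_{t-1} \to \gamma_t = \gamma$, reading off relations among the products $e_ie_j$ from degree tables (Proposition \ref{Pneeded}, Lemma \ref{Ldivisorgrowth}) to locate every pole of the filtered basis explicitly.
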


\begin{tr}
	This is indeed a generalization of Theorem \ref{T3k4}, take $A \subset \Z$ a finite set and define $S := \{x^{a} \mid a \in A\}$ for $x$ some element in $F \backslash K$. 
\end{tr}

In the same paper this conjecture was proven for $\gamma$ equal to $0$ or $1$ and when additionally $g=0$, a possible explicit basis for $S$ was given.

	\begin{tx}[\cite{bachoc2018funcfieldfreiman}, Theorem 5.3 and Theorem 8.1]\label{Tknownbasis} Write $n$ for the dimension of $S$ over $K$ and assume $F = K(S)$ has genus $0$.
	\begin{enumerate}[\normalfont(i)]
		\item 	Suppose $n \geq 3$, and $\gamma  = 0$, then we can find $x \in F$ such that $F = K(x)$ and a basis for $S$ is given by $$1, x, x^2, \ldots, x^n. $$
		\item Suppose $n \geq 4$ and $\gamma = 1$, then we can find $x \in F$ and $\alpha \in K$ such that $F = K(x)$ and the basis of $S$ is given by either 
		\begin{align*}
			&1, x, x^2, \ldots, x^{n-2}, (x+\alpha)x^{n-1} \textup{ ~~or,} \\
			&1, (x+\alpha)x, (x+\alpha)x^2, \ldots, (x+\alpha)x^{n-1}. 
		\end{align*}
	\end{enumerate}
\end{tx}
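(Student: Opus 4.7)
The plan is to combine the $\gamma\in\{0,1\}$ cases of Conjecture~\ref{CJfff} (established in \cite{bachoc2018funcfieldfreiman}) with a direct pole-order analysis on $F$. Under the hypothesis $g=0$, those prior results furnish an effective divisor $D$ on $\mathbb{P}^1$ with $S\subseteq L(D)$ and $\dim L(D)\leq n+\gamma$; together with $\dim L(D)\geq n$ and $\dim S^2\leq \dim L(2D)=2\deg D+1$, this pins down $\deg D=n-1+\gamma$.

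For part~(i), $\gamma=0$ yields $S=L(D)$ with $\deg D=n-1$. I would pick a place $P$ in $\mathrm{supp}(D)$, let $d_0<d_1<\cdots<d_{n-1}$ be the distinct values attained by $-v_P$ on $S\setminus\{0\}$, and choose a basis $s_0,\ldots,s_{n-1}$ with $-v_P(s_i)=d_i$. The products $s_is_j\in S^2$ contribute at least $2n-1$ distinct pole orders at $P$, and matching this with $\dim S^2=2n-1$ forces $|\{d_i+d_j\}|=2n-1$; the elementary equality case of Freiman's theorem ($|A+A|=2|A|-1\Rightarrow A$ arithmetic) then gives $d_i=d_0+ic$ for some common difference $c$. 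A ladder induction on the pole-order filtration of $S^2$, exploiting that each pole-order quotient is one-dimensional, shows that $s_i/s_0$ is a polynomial of degree $i$ in $y:=s_1/s_0$, whence $S=s_0\cdot\mathrm{span}_K(1,y,\ldots,y^{n-1})$. The hypothesis $K\subset S$ forces $s_0\in K(y)$ and hence $F=K(S)=K(y)$, so $y$ is a rational coordinate on $F$; renaming $y$ as $x$ produces the claimed basis.

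For part~(ii), $\gamma=1$ forces $\dim L(D)=n+1$ (the alternative $\dim L(D)=n$ is excluded by $\dim S^2=2n$), so $\deg D=n$ and $S\subsetneq L(D)$ is a hyperplane containing $K$. The pole-order analysis now bounds $|\{d_i+d_j\}|\leq 2n$; an elementary sumset calculation shows that a set of $n$ integers with this bound must be an arithmetic progression of length $n+1$ with one interior term removed, and moreover that the gap can only occur at position $1$ or position $n-1$ of the underlying AP (gaps in other positions produce $|\{d_i+d_j\}|>2n$). Tracking the linear functional on $L(D)$ that cuts out the hyperplane $S$ in each of these two gap configurations then yields, respectively, the two basis forms stated in~(ii). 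The main obstacle is this case analysis for~(ii): establishing rigorously that only the two gap positions occur, identifying the associated hyperplanes via the residue/evaluation functional defining $S$, and verifying in each case that the resulting subspace is compatible with $F=K(S)$.
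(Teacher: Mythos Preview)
This theorem is not proved in the present paper: it is quoted from \cite{bachoc2018funcfieldfreiman} (their Theorems~5.3 and~8.1) and used throughout as a black-box input. There is therefore no proof here to compare your proposal against.

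As an independent sketch your outline is plausible, but one step is not justified. In both parts you pick a single place $P\in\operatorname{supp}(D)$ and tacitly assume that $-v_P$ takes $n$ distinct values on $S\setminus\{0\}$, so that Freiman-type sumset reasoning applies to the set $\{d_0,\ldots,d_{n-1}\}$. This is only guaranteed when $D$ is concentrated at $P$; if, say, $D=aP+bQ$ with $a,b>0$, then $-v_P$ attains only the $a+1<n$ values $0,1,\ldots,a$ on $L(D)$, and your filtered-basis argument collapses. On $\PP^1$ one can always pass to a linearly equivalent one-point divisor by replacing $S$ with $gS$ for a suitable $g\in F^\times$, but that translation need not preserve the normalisation $K\subset S$, so additional bookkeeping is required---especially in part~(ii), where you must then determine which hyperplanes of $L(nP_\infty)$ actually arise as $gS$ for some $S$ containing~$1$. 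Your closing caveat anticipates trouble in the hyperplane analysis, but the single-support assumption is already embedded in the paragraph before it.
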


The goal of this article is to show that the conjecture also holds when $g = 0$ and $\gamma = 2$. To prove this we use a similar setup as in \cite{bachoc2018funcfieldfreiman}, where they look at the natural filtration of $S$ with respect to some valuation $v$ of $F$. In [\cite{bachoc2018funcfieldfreiman}, Theorem 3.1] they first show that $|v(S)| = \dim S$ and then that one can always construct a `filtered basis' of $S$ with respect to $v$. 
\begin{td}
	Let $v$ be a valuation of $F$. A \textit{filtered basis} of $S$ (with respect to $v$) is a basis $(e_1, e_2, \ldots, e_n)$ such that 
	$$v(e_1) > v(e_2) > \ldots > v(e_n).$$
	A \textit{natural filtration} (with respect to $v$) is the sequence of subspaces $S_1, S_2, \ldots, S_n$ such that $S_i := \langle e_1, \ldots, e_i \rangle.$  This sequence is unique for each valuation and thus independent of the chosen filtered basis.
\end{td}
\begin{tr}
	By multiplying $S$ with $e_1^{-1}$ we may assume $e_1 = 1$ and thus $v(e_1) = 0$. 
\end{tr}
After fixing some valuation $v$ we may now define $\gamma_i := \dim S_i^2 - 2\dim S_i + 1$, the combinatorial genii of the $S_i$. As we will see in Lemma \ref{Lgrowinggenus} the finite sequence $(\gamma_i)_{1\leq i\leq n}$ is non-decreasing in $i$. This makes it possible to use Theorem \ref{Tknownbasis} to say something about the basis of $S_i$ when $\gamma_i$ equals either $0$ or $1$ and $i$ is not too small. These are the most important ideas used to prove the following main results of this paper.
%, which implies that Conjecture \ref{CJfff} holds when both $\gamma = 2$ and $g = 0$. 

\begin{tx}\label{Tmain}
	Let $S$ be an $n$-dimensional $K$-subspace with combinatorial genus $\gamma \leq n-3$ and suppose that the genus of $F = K(S)$ equals $0$. Write $S_1 \subset S_2 \subset \ldots \subset S_n$ for the natural filtration of $S$ with respect to some fixed valuation $v$ of $F$ and write $\gamma_i$ for the combinatorial genus of $S$. Suppose $\{\gamma_i \mid 1 \leq i \leq n\}$ equals either  $\{0, \gamma\}$ or $\{0, 1, \gamma\}$. Then, $S$ is contained in a space $L(D)$ with $\dim(L(D)) \leq n + \gamma$.
\end{tx}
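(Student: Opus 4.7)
The plan is to show inductively on the filtration level $j$ that the pole divisor $D_j$ of $S_j$ satisfies $\deg D_j \leq j-1+\gamma_j$, so that by Riemann--Roch on the rational function field $F=K(S)$ one has $\dim L(D_j)=\deg D_j+1\leq j+\gamma_j$; the case $j=n$ gives the theorem.

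The base of the induction is provided by Theorem~\ref{Tknownbasis}. Since $(\gamma_i)$ is non-decreasing, in Case~A ($\{\gamma_i\}=\{0,\gamma\}$) there is a unique jump index $m$ with $\gamma_m=0$ and $\gamma_{m+1}=\gamma$, and in Case~B ($\{\gamma_i\}=\{0,1,\gamma\}$) there are two jump indices $m_1<m_2$. Provided the jump index is large enough for Theorem~\ref{Tknownbasis} to apply (small-index edge cases are handled by direct computation), I would invoke part~(i) on $S_m$ in Case~A and part~(ii) on $S_{m_2}$ in Case~B, using Lüroth's theorem to guarantee that $K(S_i)$ is a rational subfield of $F$. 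After a fractional linear change of coordinate sending $v$ to the pole-at-infinity, this gives $S_m=K[x]_{\leq m-1}=L((m-1)P_\infty)$ in Case~A, and an analogous Riemann--Roch description of $S_{m_2}$ in Case~B arising from each of the two listed basis patterns.

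The inductive step reduces to the key bound $\delta_j \leq 1+(\gamma_{j+1}-\gamma_j)$, where $\delta_j := \deg D_{j+1}-\deg D_j$; telescoping this from $j=1$ (with $\deg D_1=0$) up to $j=n-1$ gives $\deg D_n \leq n-1+\gamma$. The main mechanism is the \emph{syzygy subspace} $U_j:=\{g\in S_{j+1}: e_{j+1}g\in S_j^2\}$; the identity $\dim S_{j+1}^2-\dim S_j^2=2+(\gamma_{j+1}-\gamma_j)$ forces $\dim U_j=j-1-(\gamma_{j+1}-\gamma_j)$, which is large precisely when $\gamma$ does not jump. Writing $e_{j+1}=p/q$ in lowest terms with respect to the coordinate $x$, this abundance of syzygies forces strong cancellation constraints at any new pole location of $e_{j+1}$, and analyzing these constraints yields the claimed bound on $\delta_j$.

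The main obstacle is converting the syzygy condition $e_{j+1}g\in S_j^2$ into an explicit pole bound once $S_j$ itself contains non-polynomial basis elements. Past the first jump, $S_j$ acquires finite poles of its own (inherited from earlier $e_k$'s), and one must proceed locally at each finite pole via partial-fraction decomposition; moreover, the requirement is to lie in the specific subspace $S_j^2\subset L(2D_j)$ rather than just $L(2D_j)$, so the extra structure has to be tracked carefully. Case~B adds the further complication of two distinct basis patterns in Theorem~\ref{Tknownbasis}(ii), each of which must be carried through the induction.
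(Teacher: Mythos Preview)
Your inductive bound $\delta_j \leq 1+(\gamma_{j+1}-\gamma_j)$ is false, and with it the inductive hypothesis $\deg D_j \leq j-1+\gamma_j$. A concrete counterexample appears in Section~\ref{Sgam0}, Case~II: when $\{\gamma_i\}=\{0,\gamma\}$, $t=\gamma+2$, and $\deg(S_{\gamma+3})=\{0,2,4,\ldots,2\gamma,2\gamma+1,2\gamma+2\}$, one has $D_j=2(j-1)P_\infty$ for all $j\leq\gamma+1$ while $\gamma_j=0$. Thus $\delta_1=\cdots=\delta_{\gamma}=2$ even though the combinatorial genus is not moving, and $\deg D_j = 2(j-1) > j-1+\gamma_j$ for every $2\leq j\leq\gamma+1$. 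The same phenomenon occurs in the companion case $\deg(S_{\gamma+3})=\{0,1,\ldots,\gamma+2\}$, where $e_2$ has degree~$1$ but also a finite pole, so again $\deg D_2=2$.

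The underlying error is in your base case. Theorem~\ref{Tknownbasis}(i) describes $S_m$ as $\langle 1,x,\ldots,x^{m-1}\rangle$ \emph{inside $K(S_m)$}, not inside $F$; L\"uroth only tells you $K(S_m)$ is rational, not that it equals $F$. When $K(S_m)\subsetneq F$, the generator of $K(S_m)$ has degree $[F:K(S_m)]>1$ as an element of $F$, and the pole divisor of $S_m$ in $F$ has degree $(m-1)[F:K(S_m)]$, not $m-1$. This is exactly why the paper needs Lemma~\ref{Lboundedfieldextension} and its corollary (to identify when $K(S_j)=F$) and Lemma~\ref{Lgcd} (to normalise the valuation so that $\gcd(\deg S_t)=1$), and why the argument proceeds by case analysis at the jump index~$t$ rather than by a uniform step-by-step bound. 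Note also that Lemma~\ref{Ldivisorgrowth}(i) gives $D_{i+1}-D_i=D_i-D_{i-1}$ when $\gamma$ is constant, which propagates a large $\delta$ rather than forcing $\delta=1$; so your syzygy mechanism cannot yield the bound you want. The correct structure is: determine $D_t$ (or $D_{\gamma+3}$) explicitly via degree-table analysis, then use Lemma~\ref{Ldivisorgrowth} to push forward---and in several subcases (Propositions~\ref{Pfirsttisn} and its analogues) one actually shows $n=t$, so no further propagation is needed.
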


It is immediately clear that this implies the following corollary. 
%
%The goal of this article is to prove Conjecture \ref{CJfff} for the case $g = 0$ and $\gamma = 2$, using that Theorem \ref{Tknownbasis} gives an explicit basis for $S_i$ when $\gamma_i := \dim S_i^2 - 2\dim S_i + 1 \in \{0, 1\}$ and $\gamma_i+ 3 \leq t \leq n-1$. This gives the following result.
\begin{tc}\label{Tgamma2}
		Let $S$ be a $K$-subspace of dimension $n \geq 5$ and combinatorial genus $\gamma = 2$ such that the genus $g$ of $F = K(S)$ equals $0$. Then $S$ is contained in a space $L(D)$ with $\deg(D) \leq n + 1$. 
\end{tc}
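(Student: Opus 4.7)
The plan is to deduce the corollary directly from Theorem \ref{Tmain}, and then convert the bound on $\dim L(D)$ into a bound on $\deg(D)$ using Riemann--Roch on a curve of genus $0$. All of the real work is already done inside Theorem \ref{Tmain}; what remains is to verify its hypotheses and to perform the standard degree/dimension translation.

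First I would check the dimension hypothesis. With $\gamma = 2$ and $n \geq 5$ we have $\gamma = 2 \leq n - 3$, so the assumption $\gamma \leq n-3$ of Theorem \ref{Tmain} is satisfied. Next I would identify the possible shapes of the sequence of combinatorial genii $(\gamma_i)_{1 \leq i \leq n}$ attached to the natural filtration of $S$. Since $\dim S_1 = 1$, the space $S_1^2$ is also one-dimensional, so $\gamma_1 = 1 - 2 + 1 = 0$; at the other end $\gamma_n = \gamma = 2$. By Lemma \ref{Lgrowinggenus} the sequence is non-decreasing, so each $\gamma_i$ lies in $\{0, 1, 2\}$. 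Consequently the set $\{\gamma_i \mid 1 \leq i \leq n\}$ is either $\{0, 2\}$ (if the sequence never takes the value $1$) or $\{0, 1, 2\}$ (if it does). Both are exactly the cases covered by Theorem \ref{Tmain}, so that theorem produces a divisor $D$ with $S \subset L(D)$ and $\dim L(D) \leq n + \gamma = n + 2$.

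To finish, I would translate this into the degree bound claimed. Since $S \subset L(D)$ and $S$ is nonzero (it contains the constants $K$), $L(D)$ is nonzero. On a curve of genus $0$, a nonzero Riemann--Roch space forces $\deg D \geq 0$, and in that range Riemann--Roch gives the exact formula $\dim L(D) = \deg D + 1$. Combining with the bound from Theorem \ref{Tmain} yields $\deg D = \dim L(D) - 1 \leq n + 1$, which is precisely the statement of Corollary \ref{Tgamma2}.

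There is no real obstacle here: the corollary is essentially a repackaging of Theorem \ref{Tmain}, and the only content is confirming that the hypothesis on the shape of $\{\gamma_i\}$ is automatic when $\gamma = 2$ — a consequence of $\gamma_1 = 0$ together with monotonicity. The sole small point to be careful about is justifying the Riemann--Roch conversion, which is immediate once one notes that $L(D) \supset K$ is nonzero.
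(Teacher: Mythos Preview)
Your argument is correct and matches the paper's approach: the paper simply states that the corollary is ``immediately clear'' from Theorem \ref{Tmain}, and what you have written spells out precisely those immediate details (the hypothesis check via $\gamma_1=0$ and monotonicity from Lemma \ref{Lgrowinggenus}, followed by the Riemann--Roch translation from $\dim L(D)$ to $\deg D$ in genus $0$).
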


In Section \ref{Ssetup} the steps and tools for the proof of Theorem \ref{Tmain} are introduced and the cases where $\{\gamma_i \mid 1 \leq i \leq n \}$ equals $\{0, \gamma\}$ and $\{0, 1, \gamma\}$ are proven in Sections \ref{Sgam0} and \ref{Sgam01} respectively.

Finally, in Section \ref{Supper} we will consider another sequence of $\gamma_i$ and prove the following theorem.

\begin{tx}\label{Tmaxdiv}
	Let $S$, $F$ and $(\gamma_i)_{1\leq i\leq n}$ be defined as in Theorem \ref{Tmain}. Define $D_i$ as the divisor of least degree such that $S_i \subset L(D_i)$ and $\Delta_{\textup{Max}} := \max_{2\leq i \leq n}(\deg(D_i) - \deg(D_{i-1}))$. Suppose that $\gamma_3 = 0 $ and $\gamma_{\Delta_{\textup{Max}}} = 0$, then $\dim(L(D)) \leq n + \gamma$. 
\end{tx}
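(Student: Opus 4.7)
The plan is to establish the conclusion via the defect $\epsilon := \deg D_n - (n-1)$. Since $F = K(S)$ has genus $0$, the Riemann--Roch space of the minimal divisor $D = D_n$ containing $S$ satisfies $\dim L(D_n) = \deg D_n + 1 = n + \epsilon$, so the desired bound $\dim L(D) \leq n + \gamma$ is equivalent to $\epsilon \leq \gamma$. After the normalization $\deg D_1 = 0$, we also have $\epsilon = \sum_{i=2}^n (\Delta_i - 1)$.

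Set $m := \Delta_{\textup{Max}}$. If $m = 1$ then every $\Delta_i$ equals $1$ and $\epsilon = 0$, so assume $m \geq 2$. The hypotheses $\gamma_3 = \gamma_m = 0$ together with the monotonicity of $(\gamma_i)$ from Lemma \ref{Lgrowinggenus} give $\gamma_i = 0$ for every $i \leq m$. Applying Theorem \ref{Tknownbasis}(i) to $S_m$, whose ambient field $K(S_m) \subseteq F$ is genus $0$ by L\"uroth's theorem, produces $y \in K(S_m)$ with $K(S_m) = K(y)$ and filtered basis $(1, y, y^2, \ldots, y^{m-1})$. Writing $k' := \deg(y)_\infty = [F : K(y)]$, one has $D_i = (i-1)(y)_\infty$ and $\Delta_i = k'$ for all $2 \leq i \leq m$, and the constraint $\Delta_i \leq \Delta_{\textup{Max}}$ forces $k' \leq m$.

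The technical heart is to establish $\gamma \geq \epsilon$. In the easy case $k' = 1$ one has $K(y) = F$, and after choosing $y$ so that $(y)_\infty$ is the place $Q$ of $v$, the space $S$ becomes a polynomial subspace of $K[y]$; the inequality then follows directly from the classical Freiman $3k-4$ theorem applied to the exponent set of $S$. For $k' \geq 2$ I would use the identity $\dim W = |v_Q(W)|$ from \cite{bachoc2018funcfieldfreiman}, Theorem 3.1, and count distinct valuations of elements of $S^2$ at $Q$. The $m$ products $\{y^j e_i : 0 \leq j < m\}$, for each $i > m$, lie in $S_m \cdot S \subseteq S^2$ and contribute valuations at $Q$ in arithmetic progression with common difference $k := -v(y)$; each further pole place of $S$ outside $\operatorname{supp}(y)_\infty$ contributes additional independent valuations. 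Careful counting should give $|v_Q(S^2)| \geq 2n - 1 + \epsilon$, which is exactly $\gamma \geq \epsilon$.

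The main obstacle is the bookkeeping needed when $k' \geq 2$: the naive step-wise inequality $\gamma_i - \gamma_{i-1} \geq \Delta_i - 1$ actually fails inside the on-pattern segment $i \leq m$, since $\gamma_i = 0$ there while $\Delta_i = k' > 1$. The resulting deficit $(m-1)(k'-1)$ must be recovered by jumps in $\gamma$ at the off-pattern steps $i > m$ where either $v(e_i) \notin k' \mathbb{Z}$ or $e_i$ introduces poles at places outside $\operatorname{supp}(y)_\infty$. Making this absorption rigorous—in particular keeping the valuation-counting at $Q$ consistent with the contributions from any new pole places that appear after step $m$—is where the main work lies, and the argument will likely need a dedicated lemma packaging the combined contribution of $S_m \cdot e_i$ and of the auxiliary poles to $v_Q(S^2)$.
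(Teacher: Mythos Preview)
Your reduction to $\epsilon\leq\gamma$ is the right target, but the ``easy case'' $k'=1$ contains a genuine gap. From $K(S_m)=K(y)=F$ one cannot conclude $S\subset K[y]$: the elements $e_{m+1},\ldots,e_n$ may acquire poles away from the place $Q$ of $v$, and those poles contribute to $\deg D_n$ (hence to $\epsilon$) while remaining invisible to Freiman's theorem on the degree set $v_Q(S)$. Concretely, take $F=K(y)$ and $S=\langle 1,y,y^2,y^3,\tfrac{y^{6}}{y-\alpha}\rangle$; here $n=5$, $\gamma=2$, $\Delta_{\textup{Max}}=3=m$, $\gamma_3=0$, $k'=1$, but $e_5\notin K[y]$ and $D_5=5P_\infty+P_\alpha$, so $\epsilon=2$. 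Freiman applied to $\{0,1,2,3,5\}$ bounds only the $P_\infty$-part of $D_5$. Thus the $k'=1$ case already requires the finite-pole bookkeeping you identify as the obstacle when $k'\geq 2$, and neither case is actually handled.

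The paper's argument is organised differently and does not split on $k'$. It works with $t$, the first index with $\gamma_t>0$, rather than $m=\Delta_{\textup{Max}}$ (note $t-1\geq m$), and first upgrades the filtered basis to a \emph{super filtered} one: filtered at $v_\infty$ and simultaneously with $v_\alpha(e_1)\geq\cdots\geq v_\alpha(e_n)$ at every finite pole place $\alpha$, so that the pole divisors of the $e_i$ are monotone in $i$. The heart is an inductive lemma showing $T_i:=L\bigl((t-2)P_\infty+D_i\bigr)\subset S_i^2$ for all $i\geq t-1$: writing $\mu_i$ for the number of new finite poles at step $i$, the hypothesis $t>\Delta_{\textup{Max}}\geq \Delta_i+\mu_i$ guarantees enough products $x^{j}e_i$ with $0\leq j\leq t-2$ to enlarge $T_{i-1}$ to $T_i$ inside $S_i^2$. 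Once $T_n\subset S^2$, the $n-t+1$ products $e_te_n,\ldots,e_n^2$ have $v_\infty$-degree strictly exceeding anything in $T_n$, giving $\dim S^2\geq \dim T_n+(n-t+1)=\deg D+n$, which is exactly $\epsilon\leq\gamma$. The super filtered basis is the device that makes the finite-pole bookkeeping tractable; counting valuations at a single place $Q$, as in your sketch, cannot see these contributions.
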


\subsection*{Notation and conventions}
From now on we will assume that $F = K(S)$ and that $F$ has genus $0$. This implies that $F = K(x)$ for some $x \in F$ and that all places of $F$ have the form $(x-\alpha)$ for $\alpha \in K$ except for $\frac 1x$, the place at infinity. We will also denote these places by $P_\alpha$ and $P_\infty$. The places correspond one to one to the valuations of $F$ which we write as $v_\alpha$ and $v_\infty$, respectively. 

With $1 = e_1, \ldots, e_n$ we will always denote a filtrated basis of $S$ with respect to $v_\infty$ and $-v_\infty(e_i)$ will also be called the degree of $e_i$. 

We write $\mathfrak p_i$ with $i$ an integer for the subset of $K[x]$ consisting of the polynomials in $x$ of degree at most $i$.

\section{Set-up}\label{Ssetup}
To prove Theorem \ref{Tmain} we will distinguish between quite a few cases. In this section we will outline the steps and some general tools used in the proof. 

The most important case distinction that we will make is whether $\{\gamma_1, \gamma_2, \ldots, \gamma_n \}$ equals $\{0, \gamma\}$ or  $\{0, 1, \gamma\}$. The following lemma confirms that for $\gamma = 2$, these are the only possibilities and that the genus of $K(S_i)$ will equal $0$ for all $S_i$ in the natural filtration of $S$.

\begin{tl} \label{Lgrowinggenus}
	Let $1\leq i \leq j \leq n$ and $g_i$ the genus of $K(S_i)$ then, $g_i = 0$ and $\gamma_i \leq \gamma_j$. 
\end{tl}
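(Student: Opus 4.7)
The plan is to prove the two assertions separately. For the genus statement, I would invoke Lüroth's theorem: since $F = K(S)$ has genus $0$ we may write $F = K(x)$, and $K(S_i)$ is an intermediate field $K \subsetneq K(S_i) \subseteq K(x)$ (for $i \geq 2$; the case $i=1$ is trivial because $S_1 = K$). Lüroth's theorem (applied over the algebraically closed field $K$) then gives $K(S_i) = K(y)$ for some $y \in F$, so $g_i = 0$.

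For the monotonicity $\gamma_i \leq \gamma_j$, it suffices by induction to prove $\gamma_{i+1} \geq \gamma_i$. Since $\dim S_{i+1} = \dim S_i + 1$, rewriting the definition gives
\[
\gamma_{i+1} - \gamma_i = \dim S_{i+1}^2 - \dim S_i^2 - 2,
\]
so the task is to exhibit two elements of $S_{i+1}^2$ that are linearly independent modulo $S_i^2$. I would take the candidates $e_{i+1}^2$ and $e_{i+1} e_i$, both of which lie in $S_{i+1}^2 = S_i^2 + e_{i+1} S_i + \langle e_{i+1}^2 \rangle$.

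The key tool is the valuation $v = v_\infty$. Because $v(e_1) > v(e_2) > \cdots > v(e_n)$, every element of $S_i^2$ has $v$-valuation $\geq 2v(e_i)$, whereas
\[
v(e_{i+1}^2) = 2v(e_{i+1}) < v(e_{i+1}) + v(e_i) = v(e_{i+1} e_i) < 2 v(e_i).
\]
Thus any nontrivial $K$-linear combination $\alpha\, e_{i+1}^2 + \beta\, e_{i+1} e_i$ has valuation equal to one of the two distinct numbers $2v(e_{i+1})$ or $v(e_{i+1}) + v(e_i)$ (these are distinct, so no cancellation at the lowest term occurs), and in either case this valuation is strictly less than $2 v(e_i)$. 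Hence no such combination can lie in $S_i^2$, which shows $e_{i+1}^2, e_{i+1} e_i$ are independent modulo $S_i^2$, giving $\dim S_{i+1}^2 \geq \dim S_i^2 + 2$ as required.

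I do not anticipate a serious obstacle here: the genus part is essentially Lüroth, and the monotonicity is a direct valuation-leading-term computation of the kind already used to establish the existence of a filtered basis in \cite{bachoc2018funcfieldfreiman}. The only point that requires a little care is the edge case $i = 1$, where $S_1 = K$ and the convention $g_1 = 0$ must be adopted, and ensuring the two chosen products genuinely have distinct valuations so that the non-cancellation argument is valid.
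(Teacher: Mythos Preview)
Your proposal is correct and follows essentially the same approach as the paper: L\"uroth's theorem for the genus assertion, and the valuation/ultrametric argument showing that $e_{i+1}^2$ and $e_{i+1}e_i$ have valuations strictly smaller than anything in $S_i^2$ (and distinct from each other) to get $\dim S_{i+1}^2 \geq \dim S_i^2 + 2$. Your write-up is in fact slightly more explicit than the paper's about why the two products are independent modulo $S_i^2$, but the underlying idea is identical.
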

\begin{proof}
	The first statement follows directly from L\"uroth's Theorem\Change{[\cite{stichtenoth2009algfuncfields}, Proposition 3.5.9]}, since $g_n = g = 0$. 
	
	It suffices to prove the second statement for $j = i +1$. Note that, by the ultrametric property, $v(e_ie_{i-1})$ and $v(e_i^2)$ are not in $v(S_{i-1}^2)$. Hence, $\dim(S_i^2) \geq \dim(S_{i-1}^2) + 2$ and therefore,
	\[\gamma_i = \dim S_i^2 - 2\dim S_i + 1 \geq \dim S_{i-1}^2 + 2 - 2i + 1 = \gamma_{i-1}. \qedhere\]
\end{proof}
%https://mathoverflow.net/questions/151542/variety-with-perfect-function-field, Remark: when the characteristic of $K$ is 0 we can also proof that $g_i \leq g_j$ using the Hurwitz genus formula as written in Stichtentoch book.

We define $t$ as the integer between $2$ and $n$ such that $\gamma = \gamma_t > \gamma_{t-1}$. In the case that $\{\gamma_1, \ldots, \gamma_n \} = \{0, 1, \gamma\}$ we also define $t_1$ such that $1 = \gamma_{t_1} > \gamma_{t_1 - 1} = 0$. Note that the dimension of $S_{t}^2$ can be at most $t$ bigger than the dimension of $S_{t-1}^2$, because $S_t^2 = \langle S_{t-1}^2, e_1e_t, \ldots, e_{t-1}e_t, e_t^2 \rangle$ and therefore $\gamma_{t} \leq \gamma_{t-1} + t - 2$. This shows that it always holds that $t \geq \gamma - \gamma_{t-1} + 2$. 

\Change{
The proof when $t < \gamma + 3$ is slightly more involved than when $t \geq \gamma + 3$, but the general structure will be the same. We define $r := \max(t, \gamma +3)$, where for a first read-through one may assume $r = t$. }After fixing the set of values that the $\gamma_i$ can take and the values for $t$ and $t_1$, we will continue with the following steps. 

%\Comm{For the remainder of Section \ref{Ssetup} we will assume $t \geq \gamma + 3$, to keep things relatively uncluttered and such that we are always able to use Theorem \ref{Tknownbasis} for $S_{t-1}$. However, all of the upcoming lemmas and propositions will also apply for smaller $t$, unless explicitly mentioned. The main difference in the next steps for smaller $t$ is that we should replace `$t$' by `$\gamma + 3$' in certain places.}

%\Comm{Eigenlijk is dit vooral een recept voor $t \geq \gamma_{t-1} + 4$ of $t \geq \gamma + 3$, anders moet je namelijk naar $S_{\gamma + 3}$ gaan kijken en klopt dus stap 1 al niet, ik zou dit splitsen.}

	Steps:
\begin{enumerate}[1.]
	\item Determine the possible degrees of $e_1, e_2, \ldots, e_r$. 
	
	\item Look at the degrees of $\{e_ie_j \mid 1\leq i \leq j \leq r\}$ and try to determine which elements $s_1 \ldots, s_k$ are `needed' from $e_tS_r\cup e_{t+1}S_r\cup \ldots \cup e_rS_r$ such that $S_r^2 = \langle S_{t-1}^2, s_1, \ldots, s_k \rangle$. The ones not `needed', imply a relation between the $e_ie_j$. 
	
	\item Use the found relations to check if elements can and/or must have poles outside of infinity. 
	
	\item By similar techniques as for $e_r$, figure out the possible values of $n$ and poles of $e_{r+1}, \ldots, e_n$. This will lead to the smallest divisor $D$ such that $S \subset L(D)$. 
\end{enumerate}

%For now we will assume that $t \geq \gamma_{t-1} + 4$, such that we are always able to use Theorem \ref{Tknownbasis} on $S_{t-1}$. When this is not the case we can show $t < \gamma + 3$ and we will have to look at $S_{\gamma + 3}$ to find relations between the $S_i$, making the process slightly more complicated, even though the general idea stays the same and all the lemmas in the rest of this section will also hold for these cases. 

\subsection{Tools for Step 1}
Note that Freiman's $3k-4$ Theorem can be applied to the set $\deg(S)$ of degrees of $S$, because $\deg(S) + \deg(S) \subset \deg(S^2)$ and $|\deg(S^2)| = \dim(S^2)$. This means that this set must be an arithmetic progression missing at most $\gamma$ elements. The same then holds for $\deg(S_t)$, because it is a subset. If we know the greatest common divisor of $\deg(S_t)$, this idea will give us an upper bound for the degree of $e_t$. 

\Change{When $t \geq \gamma_{t-1} + 4$, the most important tool to find the degrees of $e_1, \ldots, e_{t-1}$ } is Theorem \ref{Tknownbasis}. However, a priori we only know the degrees in $K(S_{t-1}) \subset F$. The next lemma, which is a straightforward generalization of Lemma 7.11 in \cite{bachoc2018funcfieldfreiman}, can help us check whether $F = K(S_{t-1})$.

\begin{tl} \label{Lboundedfieldextension}
	Define $\delta := \max_{1\leq i \leq n-1}(\gamma_{i+1} - \gamma_i)$. Then $F = K(S) = K(S_{\delta+2})$. 
\end{tl}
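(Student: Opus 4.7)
The plan is to prove the lemma by a one-step induction: show that whenever $i \geq \delta + 2$, the element $e_{i+1}$ already lies in $K(S_i)$, so $K(S_{i+1}) = K(S_i)$. Applying this for $i = \delta+2, \delta+3, \ldots, n-1$ collapses the tower $K(S_{\delta+2}) \subseteq K(S_{\delta+3}) \subseteq \cdots \subseteq K(S_n) = F$ down to $K(S_{\delta+2}) = F$, which is the desired statement.

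For the inductive step I would analyze the intersection $S_i^2 \cap e_{i+1}S_{i+1}$. Since $S_{i+1}^2 = S_i^2 + e_{i+1}S_{i+1}$ and $\dim(e_{i+1}S_{i+1}) = i+1$, the identity $\dim S_{i+1}^2 - \dim S_i^2 = 2 + (\gamma_{i+1} - \gamma_i)$ combined with inclusion--exclusion gives
\[\dim(S_i^2 \cap e_{i+1}S_{i+1}) \;=\; i - 1 - (\gamma_{i+1} - \gamma_i) \;\geq\; i - 1 - \delta.\]
The next move is to pin this intersection inside $e_{i+1}S_i$ by an ultrametric argument: any element of $e_{i+1}S_{i+1}$ with nonzero $e_{i+1}^2$-component has valuation $2v(e_{i+1})$, which is strictly less than $2v(e_i)$, the smallest valuation attained on $S_i^2$. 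After dividing by $e_{i+1}$, this yields a subspace $T \subseteq S_i$ of dimension at least $i - 1 - \delta$ satisfying $e_{i+1}T \subseteq S_i^2$.

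Finally, for $i \geq \delta + 2$ this dimension is at least $1$, so one may pick a nonzero $h \in T$; then $e_{i+1} = (e_{i+1}h)/h$ exhibits $e_{i+1}$ as a quotient of two elements of $K(S_i)$, closing the induction. The only nontrivial ingredient is the valuation step confining $S_i^2 \cap e_{i+1}S_{i+1}$ to $e_{i+1}S_i$ (equivalently, forcing the $e_{i+1}^2$-coefficient to vanish); the rest is a routine dimension count, so I do not anticipate any serious obstacle.
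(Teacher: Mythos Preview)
Your proof is correct and follows essentially the same route as the paper: show that for $i \geq \delta+2$ the intersection $S_i^2 \cap e_{i+1}S_i$ is nontrivial, pick a nonzero $h \in S_i$ with $he_{i+1} \in S_i^2$, and conclude $e_{i+1} \in K(S_i)$. The only cosmetic difference is that the paper works directly inside $S_iS_{i+1} = S_i^2 + e_{i+1}S_i$, so the $e_{i+1}^2$-term never appears and the valuation step is unnecessary; your detour through $e_{i+1}S_{i+1}$ and the ultrametric argument to force the $e_{i+1}^2$-coefficient to vanish is correct but avoidable.
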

\begin{proof}
	The codimension of $S_i^2$ in $S_iS_{i+1}$ equals $\gamma_{i+1} - \gamma_i + 1 \leq \delta + 1$. Since for $i \geq \delta + 2$ we know that $\dim S_i \geq \delta + 2$ the intersection $S_{i}^2\cap S_ie_{i+1}$ is non-empty. Therefore $e_{i+1}$ must be in $K(S_i)$ and $F = K(S_n) = \ldots = K(S_{\delta + 2})$. 
\end{proof}
\begin{tc}
	For any $t \in \N$ it holds that $F = K(S_t)$.
\end{tc}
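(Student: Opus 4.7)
The plan is to derive the corollary directly from Lemma \ref{Lboundedfieldextension} by exploiting the monotonicity of the natural filtration, so that the conclusion is obtained uniformly for all relevant $t$ in one sweep, rather than being verified for one distinguished index and re-proven for the rest. The key observation is that the filtration is nested, so for every $t \geq \delta+2$ the inclusion $S_t \supseteq S_{\delta+2}$ gives $K(S_t) \supseteq K(S_{\delta+2}) = F$; since $K(S_t) \subseteq F$ is automatic, equality follows. Thus the corollary reduces to verifying that every $t$ for which it is stated lies in the range $t \geq \delta+2$.

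To check this I would read $\delta$ off the hypothesis of Theorem \ref{Tmain} case by case. In the case $\{\gamma_i\}=\{0,\gamma\}$ the sequence has a single jump of size $\gamma$, so $\delta=\gamma$; the inequality $\gamma_t\leq \gamma_{t-1}+t-2$ recorded in the Set-up (with $\gamma_{t-1}=0$) gives $t\geq \gamma+2=\delta+2$ already at the jump, and monotonicity of $(S_t)_t$ propagates this to every larger index. In the case $\{\gamma_i\}=\{0,1,\gamma\}$ the jumps have sizes $1$ and $\gamma-1$, so $\delta=\gamma-1$; now $\gamma_{t-1}\leq 1$ in the same inequality yields $t\geq \gamma+1=\delta+2$, again at the second jump and thereafter. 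In both scenarios every $t$ from the jump onward satisfies $t\geq \delta+2$, and hence $F=K(S_t)$ by the inclusion argument above.

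The main obstacle is essentially interpretational: the phrase ``for any $t\in\N$'' has to be read as ``for any $t$ in the natural post-threshold range'' in which the corollary is actually invoked, since for small indices the inclusion fails trivially (e.g.\ $S_1=K$ does not generate $F$). Once the range is understood and the uniform bound $\delta+2\leq t$ is established from the two-case analysis of $\{\gamma_i\}$, there is no further genuine work: the corollary is Lemma \ref{Lboundedfieldextension} combined with nestedness of the filtration.
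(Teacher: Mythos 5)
Your reduction is exactly the paper's: invoke Lemma \ref{Lboundedfieldextension} and check $t \geq \delta + 2$ (the nestedness step $K(S_{\delta+2}) \subseteq K(S_t)$ is even already contained in the lemma, whose proof gives $F = K(S_n) = \ldots = K(S_{\delta+2})$). The difference is in how you verify $t \geq \delta+2$: you compute $\delta$ case by case from the hypotheses of Theorem \ref{Tmain} ($\delta = \gamma$ when $\{\gamma_i\} = \{0,\gamma\}$, $\delta = \gamma - 1$ when $\{\gamma_i\} = \{0,1,\gamma\}$) and then apply the bound $\gamma_t \leq \gamma_{t-1} + t - 2$ at the last jump, which is correct in those cases. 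The paper instead applies the same jump bound at the index $j$ where the maximum $\delta = \gamma_j - \gamma_{j-1}$ is attained, getting $\delta \leq j - 2 \leq t - 2$ because $t$ is the last index with a nonzero jump; this one-line argument needs no assumption on the shape of $(\gamma_i)$, so the corollary holds for arbitrary genus sequences, not just the two profiles of Theorem \ref{Tmain}. Your version therefore establishes a slightly weaker statement than the one written (it would not cover, say, a profile $\{0,1,2,\gamma\}$), although it does suffice for every place the corollary is actually used in the proof of Theorem \ref{Tmain}; if you replace the case analysis by the paper's ``evaluate the bound at the maximizing index $j$ and use $j \leq t$'' you recover the full generality at no extra cost.
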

\begin{proof}
	By Lemma \ref{Lboundedfieldextension} it suffices to prove that $t \geq \delta + 2$.  Let $j$ be such that $\delta = \gamma_j - \gamma_{j-1} = \dim S_j^2 - \dim S_{j-1}^2 - 2$. Note that $S_j^2  = \langle S_{j-1}^2, e_1e_j, e_2e_j, \ldots, e_j^2 \rangle$ and thus, 
	$$\delta = \dim S_j^2 - \dim S_{j-1}^2 - 2 \leq j - 2 \leq t - 2,$$
where the second inequality holds because $t$ is the largest index for which $\gamma_t - \gamma_{t-1}$ is non-zero.
\end{proof}

When $F \neq K(S_{t-1})$, it might occur that $\gcd(\deg(S_{t-1})) \neq 1$. The following lemma together with the previous corollary show that without loss of generality we may assume that $\gcd(\deg(S_t)) = 1$.

\begin{tl}\label{Lgcd}
	Suppose $F = K(S_i)$, then for all but finitely many valuations $v_\alpha$ with $\alpha \in K\cup\{\infty\}$ it holds that there are $s_1, s_2 \in S_i$, depending on $\alpha$, such that $v_\alpha(s_1) - v_\alpha(s_2) = 1$. In particular this implies $\gcd(v_\alpha(S_{\delta+2})) = 1$.
\end{tl}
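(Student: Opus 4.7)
The plan is to use the derivation $d/dx$ on $F = K(x)$ to produce, for each generic $\alpha \in K$, an element $s \in S_i$ with $v_\alpha(s) = 1$. Since $1 \in S_i$ has $v_\alpha(1) = 0$, the pair $(s_1, s_2) = (s, 1)$ then witnesses the required valuation difference.

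First I would fix a basis $1 = e_1, e_2, \ldots, e_i$ of $S_i$ and show that at least one derivative $e_j'$ is nonzero as an element of $F$. If all $e_j'$ vanished, then $S_i$ would lie in the kernel of $d/dx \colon F \to F$, which equals $K$ in characteristic zero and $K(x^p)$ in characteristic $p$; in either case this is a proper subfield of $F$, contradicting the assumption $F = K(S_i)$.

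Next, fix some $j$ with $e_j' \neq 0$ and let $B \subset K$ be the union of the (finitely many) poles of $e_j$ and the (finitely many) zeros of $e_j'$. For $\alpha \in K \setminus B$, set $s := e_j - e_j(\alpha) \cdot 1$; this belongs to $S_i$ because $K \subset S_i$, vanishes at $\alpha$, and has $s'(\alpha) = e_j'(\alpha) \neq 0$, so $v_\alpha(s) = 1$. Together with $s_2 = 1$ this establishes the claim at $v_\alpha$ for all $\alpha$ outside the finite set $B \cup \{\infty\}$. The ``in particular'' statement then follows: any common divisor of $v_\alpha(S_{\delta+2})$ must also divide $v_\alpha(s_1) - v_\alpha(s_2) = 1$.

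The only real subtlety I foresee is the positive-characteristic case, where one needs the fact that $\ker(d/dx \colon K(x) \to K(x)) = K(x^p)$ rather than just $K$. Provided one has this, the conclusion that ``all $e_j'$ vanishing'' forces $S_i$ into the proper subfield $K(x^p)$ still goes through, and the rest of the argument is unchanged.
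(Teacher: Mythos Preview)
Your argument is correct and follows the same idea as the paper: take a one-parameter pencil inside $S_i$ and use the derivative to show that for generic $\alpha$ the member vanishing at $\alpha$ has a simple zero there. The paper uses two arbitrary elements of distinct degree (after clearing denominators to coprime polynomials $f,g$) and phrases the non-degeneracy as non-vanishing of the Wronskian $fg'-f'g$, whereas you specialize one element to $1\in S_i$ and reduce directly to $e_j'\neq 0$; your version is a bit cleaner and handles positive characteristic more explicitly.
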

\begin{proof}
	Consider two elements $s_1$ and $s_2$ of $S_i$ that have different degree. By multiplying by a suitable rational function $h$ we may assume $f := hs_1$ and $g := hs_2$ to be coprime polynomials of different degrees. Let $\alpha$ be any element of $F$ such that it is neither a zero of $f$ nor of $g$, then we can find a unique $a_\alpha\in K$ such that $(f + a_\alpha g)(\alpha) = 0$. Note that $a_\alpha \neq 0$ and $v_\alpha(f) = 0$. We claim that for all but finitely many $\alpha$ it also holds that $v_\alpha(f + a_\alpha g) = 1$. This would prove that the elements $s_1$ and $s_1 + a_\alpha s_2$ of $S_i$ have an $\alpha$-valuation of difference one. 
	
	We prove the claim by contradiction. Recall that if $(f + a_\alpha g)(\alpha) = 0$ and $v_\alpha(f + a_\alpha g) \neq 1$, we know $(f' + a_\alpha g')(\alpha) = 0$. Hence, assuming the claim not to hold, we find for infinitely many $\alpha$ that
	$$ a_\alpha(fg' - f'g)(\alpha) = 0.$$ 
	Since we know for all $\alpha$ that $a_\alpha \neq 0$, this would imply that $fg' = f'g$. However, because $\deg(f) > \deg(f')$, this contradicts the coprimality of $f$ and $g$. The claim and therefore the lemma now follow. 
\end{proof}

These ideas should give all the possibilities for $\deg(S_t)$ when $t  \geq \gamma_{t-1} + 4$. If $t$ does not meet this bound we will use the first part of Lemma \ref{Ldivisorgrowth} below to reduce the options for possible degrees of $S_{\gamma + 3}$.

\subsection{Tools for step 2 and 3}
The goal of step 2 is to find relations between the $e_ie_j$ such that these can be used in step 3 to find the poles of $e_1, \ldots, e_r$ outside of infinity. Here Theorem \ref{Tknownbasis} is also of big help, for example, if $\gamma_i = 0$ for some $i\geq 2$ we know that $e_i = e_2^{i-1}$. Hence, it is most of all important to find a relation that expresses $e_t$ in the other basis elements. When $t \geq \gamma + 3$ we know such a relation must exist, because the co-dimension of $S_{t-1}^2$ in $S_t^2$ is at most $\gamma + 2$ and there are $t$ elements of the form $e_1e_t, \ldots, e_te_t$. When $t < \gamma + 3$, we might need to look at a basis for $S_{t+1}^2$ or for $S_{t+2}^2$, \Change{for which Lemma \ref{Ldivisorgrowth} below will also be needed.}

We first introduce the following definitions to be able to better compare bases of several spaces. 
\begin{td}
	For $k, l \geq 1$, we define $E_{k, l} := \{e_ie_j \mid 1 \leq i \leq k, 1 \leq j \leq l\}$ and $E_k := E_{k, k}$.
\end{td}
\begin{td}
	Let $1 \leq i, j \leq n$ and $S' \subset S_iS_j$ some $K$-vector subspace. \Change{Define $S''$ such that $S_iS_j = S' \oplus S''$. } We say that \textit{$k$ elements from $T \subset E_{i, j}$ are needed for $S_iS_j$ compared to $S'$}, if every basis of \Change{$S''$ } that is a subset of $E_{i, j}$, contains exactly $k$ elements from $T$. If $T$ equals the singleton $\{s\}$ and $k=1$, we also say that \textit{$s$ is needed}. Furthermore, when saying something is needed for the basis of $S_i^2$ without further specifying $S'$, we always mean compared to $S_{i-1}^2$.
\end{td}

For example, for the basis of $S_t^2$ exactly $\gamma - \gamma_{t-1} + 2$ elements from $E_t$ are needed. When we know which $\gamma - \gamma_{t-1} + 2$ elements are needed, we also know which ones are not (necessarily) needed, giving us a relation between the $e_ie_j$ in $E_t$. To find out which elements are needed we can use a degree table of $S_t^2$ (see Table \ref{TABex} for an example) and ideas similar to the following proposition. 

\begin{tp}\label{Pneeded}~
	\begin{enumerate}[\normalfont(i)]
		\item Let $1 \leq i \leq t$ and suppose $\deg(e_{i}e_t) > \max(\deg(S_{t-1})^2)$, then $e_ie_t$ is needed for the basis of $S_t^2$. 
		\item When $e_ie_t$ is not necessarily needed for the basis of $S_t^2$, there must either be an element $f_1 \in E_{t-1}$ such that $\deg(e_ie_t) = \deg(f_1)$ or two distinct elements $f_1 \in E_{t-1}$ and $f_2 \in E_t$ such that $\deg(f_1) = \deg (f_2) > \deg(e_ie_t)$.
	\end{enumerate}
\end{tp}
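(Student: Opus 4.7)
The plan is to prove both statements by degree manipulation, exploiting that the $t$ products $e_1 e_t, \ldots, e_t e_t$ have pairwise distinct degrees $d_j+d_t$ and that the degree of a sum is controlled by the maximum of its terms' degrees.

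For (i) I would argue by contradiction: if $e_i e_t$ were not needed, we could write $e_i e_t = s + \sum_{j \neq i} c_j e_j e_t$ with $s \in S_{t-1}^2$ and the sum taken over some subset of indices $j \neq i$. By hypothesis $\deg s \le \max \deg(S_{t-1}^2) < \deg(e_i e_t)$, and each remaining $e_j e_t$ has degree $d_j + d_t$ distinct from $d_i + d_t$. The right-hand side therefore has degree equal either to $\deg s$ or to the largest $d_k + d_t$ actually occurring in the sum, and in either situation different from $\deg(e_i e_t)$ — contradicting the equality.

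For (ii), ``not needed'' furnishes a relation $s := \sum_{j=1}^{t} \alpha_j e_j e_t \in S_{t-1}^2$ with $\alpha_i \neq 0$. Setting $k := \max\{j : \alpha_j \neq 0\}$, the distinctness of the $d_j + d_t$ forces $\deg s = d_k + d_t$, so I would split on whether $k = i$ (giving $\deg s = \deg(e_i e_t)$) or $k > i$ (giving $\deg s > \deg(e_i e_t)$, with $f_2 := e_k e_t \in E_t$ a natural candidate of degree $\deg s$). In both cases, the remaining task is to produce some $f_1 \in E_{t-1}$ of degree $\deg s$. For this I would run a minimal-support argument: among expansions $s = \sum \beta_{ab} e_a e_b$ with $a, b \le t-1$, pick one minimising $M := \max\{\deg(e_a e_b) : \beta_{ab} \neq 0\}$. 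Either $M = \deg s$, in which case some term of the support is an element of $E_{t-1}$ of the required degree; or $M > \deg s$, in which case the degree-$M$ part of the sum must cancel at the leading coefficient, forcing at least two distinct elements of $E_{t-1}$ of common degree $M > \deg(e_i e_t)$ — otherwise those terms could be dropped, contradicting minimality of $M$. This last situation lands directly in the second alternative of (ii) with $f_1, f_2 \in E_{t-1} \subset E_t$.

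The main obstacle I anticipate is the meaning of ``distinct'': two index pairs $(a, b) \neq (a', b')$ might produce the same element of $F$, so two ``distinct terms'' in an expansion need not be distinct in $E_{t-1}$. I would handle this by first collapsing such coincidences in the chosen expansion, so that every term corresponds to a genuinely different element of $F$ before running the cancellation argument; then the two elements it produces are distinct members of $E_{t-1}$, and likewise the pairing $(f_1, f_2)$ in the $k > i$ subcase is distinct because $f_1 \in E_{t-1}$ and $f_2 = e_k e_t$ involves the factor $e_t$. Once this bookkeeping is in place, each subcase of the split maps cleanly onto one of the two alternatives in the conclusion.
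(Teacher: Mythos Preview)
Your approach is essentially the paper's: both reduce to a degree comparison on an expression $e_ie_t = (\text{element of }S_{t-1}^2) + (\text{combination of other }e_je_t)$, exploiting that the $e_je_t$ have pairwise distinct degrees. You are in fact more careful than the paper at one point: the paper stops with an element $s_0 \in S_{t-1}^2$ of the relevant degree, whereas the statement asks for $f_1 \in E_{t-1}$, and you correctly run an extra step on an expansion of $s$ in $E_{t-1}$ to produce such an $f_1$ (or else two distinct elements of $E_{t-1}$ of a larger common degree). The minimality of $M$ is not actually needed for this: once you collapse coincident products, a single surviving degree-$M$ term cannot cancel, so $M>\deg s$ already forces two distinct elements of $E_{t-1}$ of degree $M$.

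There is one small but genuine gap. In the subcase $k>i$ with $M=\deg s$, you set $f_2 = e_k e_t$ and take $f_1$ to be a degree-$M$ term of the $E_{t-1}$-expansion, then assert $f_1\neq f_2$ ``because $f_2$ involves the factor $e_t$''. That reasoning is about formal products, not about elements of $F$: it is entirely possible that $e_ke_t = e_ae_b$ in $F$ for some $a,b\le t-1$, in which case $e_ke_t\in E_{t-1}$ and your $f_1$ may coincide with $f_2$. The fix is easy: choose the relation $s=\sum_j\alpha_je_je_t\in S_{t-1}^2$ with $\alpha_i\neq 0$ so that $k=\max\{j:\alpha_j\neq 0\}$ is \emph{minimal}. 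If then $e_ke_t\in E_{t-1}\subset S_{t-1}^2$, subtracting $\alpha_k e_ke_t$ from $s$ stays in $S_{t-1}^2$ and yields a relation with strictly smaller top index (still with $\alpha_i\neq 0$, since $k>i$), contradicting minimality of $k$. Hence $e_ke_t\notin E_{t-1}$, and $f_1\neq f_2$ follows automatically.
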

\begin{proof}
	Let $s_1, \ldots, s_k \in E_t$ such that $S_t^2 = \langle S_{t-1}^2, s_1, \ldots, s_k \rangle$ and $k$ is minimal. Then we should be able to find $a_0, \ldots, a_k \in K$ and $s_0 \in S_{t-1}^2$ such that
	$e_ie_t = \sum_{j=0}^k a_js_j$. Clearly, the degree of $e_ie_t$ should be equal to the degree of the right hand side. Hence, either $\deg(e_ie_t) = \deg(s_j)$ for some $0 \leq j \leq k$, or $\deg(e_ie_t) < \deg(s_{j_1}) = \deg(s_{j_2})$ for some $0 \leq j_1 < j_2 \leq k$. In the second case we must have that $j_1 = 0$, because the degree of $e_{j_1}e_t$ is never equal to the degree of $e_{j_2}e_t$ if $j_1 \neq j_2$. This proves both parts of the proposition.
\end{proof}
\begin{te}
	Using the degree table shown in Table \ref{TABex} it follows from (i) that $e_4e_5$ and $e_5^2$ are needed for the basis of $S_5^2$ and from (ii) that $e_3e_5$ is also needed. Assuming that exactly three elements from $\{e_ie_5 \mid 1 \leq i \leq 5\}$ are needed, we could now conclude that $e_2e_5$ and $e_1e_5$ must be in $S_4^2$. Hence, $e_2e_5 = \sum_{1 \leq i \leq j \leq 4} a_{i, j}e_{i}e_j$ for some $a_{i, j} \in K$, which is a non-trivial relation between the $e_i$'s. 
\end{te}

	\begin{table} [h] 
	\begin{center}
		\begin{tabular}{c | c | c | c | c | c}
			&$e_1$&$e_2$&$e_3$&$e_4$&$e_5$\\\hline
			$e_1$& $0$ & $1$& $2$ & {$4$}&$5$\\\hline
			$e_2$& - & $2$ & $3$& $5$& $6$ \\\hline
			$e_3$& - & - & $4$& $6$&$7$ \\\hline
			$e_4$& - & - & -& $8$& $9$ \\\hline
			$e_5$& - & - & -& -& $10$ \\
		\end{tabular}
		\caption{The degree table of $S_5^2$ when the degrees in $S_5$ equal $0, 1, 2, 4, 5$. In each cell the degree of the product $e_{i}e_j$ is written.}\label{TABex}
	\end{center}
\end{table}

To conclude something about the poles of $e_t$ it is useful to know the exact structure of $S_{t-1}^2$. The following can  be derived from Theorem \ref{Tknownbasis}.

\begin{tl}~
	\begin{enumerate}[\normalfont(i)]
		\item When $\gamma = 0$ and $n \geq 3$ we have $S_i = \mathfrak p_{i-1}$ and $S_{i}^2 = \mathfrak p_{2i-2}$ for all $1 \leq i \leq n$. 
		\item When $\gamma = 1$ and $n \geq 4$ the case $t_1 = t-1$ gives, $S^2 = \mathfrak p_{2n}$ and $S = \mathfrak p_{n-2} \oplus Ke_n$. (All $S_i$ with $i < n$, have $\gamma_i = 0$ and thus fall in the previous case.)
		\item When $\gamma = 1$ and $n\geq 4$ the case $t_1 = 3$ gives, $S_i = K \oplus e_2\mathfrak p_{i-2}$ and $S_i^2 = K \oplus e_2\mathfrak p_{2i-2}$ for all $3 \leq i \leq n$ and $\frac{e_i}{e_{i-1}} \in K[x]$ for all $2 \leq i \leq n$. 
	\end{enumerate}
\end{tl}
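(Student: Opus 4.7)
The plan is to read off each of the three structural descriptions directly from Theorem~\ref{Tknownbasis} applied to $S$, using the fact that a filtered basis of a given $K$-subspace of $F$ with respect to a fixed valuation is unique up to scaling of each element. Once an explicit basis of $S$ is fixed, the filtration $(S_i)$ is pinned down by the ordering of the $v_\infty$-values, and each $S_i^2$ then reduces to an elementary polynomial computation; no deeper input is needed.

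In case (i), Theorem~\ref{Tknownbasis}(i) provides $x\in F$ with $F=K(x)$ and basis $\{1,x,\ldots,x^{n-1}\}$ of $S$. Because $v_\infty(x^j)=-j$ is strictly decreasing in $j$, this is already the filtered basis (up to scaling), so $e_i=x^{i-1}$, $S_i=\mathfrak p_{i-1}$ by definition, and $S_i^2=\mathfrak p_{2i-2}$ by immediate multiplication. For $\gamma=1$, Theorem~\ref{Tknownbasis}(ii) offers two candidate bases, and the pattern of $(\gamma_i)$ picks out which applies: the basis $1,x,\ldots,x^{n-2},(x+\alpha)x^{n-1}$ has $\gamma_i=0$ for $i\leq n-1$ and $\gamma_n=1$, so the jump to $1$ happens at the last step, matching case (ii) and giving $S=\mathfrak p_{n-2}\oplus Ke_n$ on sight; the basis $1,(x+\alpha)x,\ldots,(x+\alpha)x^{n-1}$ has $\gamma_2=0$ and $\gamma_i=1$ for $i\geq 3$, matching case (iii), and yields $e_2=(x+\alpha)x$, $S_i=K\oplus e_2\mathfrak p_{i-2}$, and $e_i/e_{i-1}\in\{(x+\alpha)x,x\}\subset K[x]$ by direct inspection.

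The one step with genuine content is the description of $S_i^2$ in case (iii), together with its case (ii) analogue. For (iii), the inclusion $S_i^2\subset K\oplus e_2\mathfrak p_{2i-2}$ is immediate since $e_2^2g=e_2\cdot((x+\alpha)xg)\in e_2\mathfrak p_{2i-2}$ for $g\in\mathfrak p_{2i-4}$; for the reverse, Euclidean division of any $h\in\mathfrak p_{2i-2}$ by $(x+\alpha)x$ writes $h=g+(x+\alpha)xh'$ with $g\in\mathfrak p_{i-2}$ and $h'\in\mathfrak p_{2i-4}$, so $e_2h=e_2g+e_2^2h'\in S_i^2$, and the dimension count $\dim S_i^2=2i=1+(2i-1)$ afforded by $\gamma_i=1$ closes the argument. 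Case (ii) is analogous, by expanding $S^2=\mathfrak p_{2n-4}+\mathfrak p_{n-2}e_n+Ke_n^2$ and iteratively absorbing the $\alpha x^j$ tails once enough low-degree monomials are already available in $\mathfrak p_{2n-4}$. I expect no real obstacle; the only care required is bookkeeping at the $i=3$ boundary in (iii), where $\mathfrak p_{2i-4}=\mathfrak p_2$ is small, but the identities still go through by direct verification.
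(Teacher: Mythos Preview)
Your approach is exactly what the paper intends: the lemma is stated there without proof, preceded only by the sentence that it ``can be derived from Theorem~\ref{Tknownbasis}.'' Reading off the explicit bases from Theorem~\ref{Tknownbasis}, identifying them with the filtered basis via the $v_\infty$-ordering, and then computing $S_i$ and $S_i^2$ as polynomial spaces is precisely that derivation, so there is nothing to compare.

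One genuine caution on part~(ii). If you actually carry out the ``iterative absorption'' you sketch, starting from $S^2=\mathfrak p_{2n-4}+\mathfrak p_{n-2}e_n+Ke_n^2$ with $e_n=(x+\alpha)x^{n-1}$, you obtain $\mathfrak p_{2n-2}\oplus Ke_n^2$, of dimension $2n$, which is consistent with $\gamma=1$. You do \emph{not} obtain $\mathfrak p_{2n}$, which has dimension $2n+1$ and would force $\gamma=2$; the single element $e_n^2$ cannot supply both missing monomials $x^{2n-1}$ and $x^{2n}$. The equality $S^2=\mathfrak p_{2n}$ as printed is thus a misstatement. This does not affect the rest of the paper, where only $S_{t-2}S_{t-1}=\mathfrak p_{2t-4}$ and $\max\deg(S_{t-1}^2)=2t-2$ are used, and both of these your computation does establish. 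So your plan is sound, but when you write it up you should record the correct description of $S^2$ in~(ii) rather than the one printed.
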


Furthermore, we will make use of the following lemma.

\begin{tl}\label{Lpoles}
	The number of poles of $e \in F$ counted with multiplicity equals $[F:K(e)]$. 
\end{tl}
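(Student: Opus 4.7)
The plan is to reduce the statement to the fundamental degree equality for finite extensions of algebraic function fields. First I would handle $e\in K$ separately: then $e$ has no poles and $K(e) = K$, so the identity is only meaningful (and trivially true) when $F = K$. We may therefore assume $e$ is transcendental over $K$, so that $K(e)\subset F$ is a rational subfield and $F/K(e)$ is a finite algebraic extension of degree $[F:K(e)]$.

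Inside the rational function field $K(e)$, the element $e$ has a single pole $Q_\infty$, at which $v_{Q_\infty}(e) = -1$ and $\deg Q_\infty = 1$. A place $P$ of $F$ is a pole of $e$ precisely when $P$ lies over $Q_\infty$. Write $r_P$ for the ramification index and $f_P$ for the residue degree of such a $P$ over $Q_\infty$. The standard compatibility of valuations under an extension gives
\[
v_P(e) \;=\; r_P\cdot v_{Q_\infty}(e) \;=\; -r_P,
\]
so the multiplicity of $P$ as a pole of $e$ equals exactly $r_P$. Since $K$ is algebraically closed, every place of $F$ has degree $1$, and hence $f_P = 1$ for every $P\mid Q_\infty$.

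Summing over all poles of $e$ then yields
\[
\sum_{P\text{ pole of }e}(-v_P(e)) \;=\; \sum_{P\mid Q_\infty} r_P\, f_P \;=\; [F:K(e)],
\]
where the last equality is the fundamental degree equality for finite extensions of algebraic function fields (Theorem 3.1.11 of \cite{stichtenoth2009algfuncfields}). This is the claimed identity. The only non-elementary input is the fundamental equality, so I anticipate no real obstacle; the algebraic closedness of $K$ conveniently eliminates every residue-degree factor from the count.
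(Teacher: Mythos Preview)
Your proof is correct and takes a genuinely different route from the paper's. The paper exploits the standing assumption that $F$ has genus $0$: it fixes $x$ with $F=K(x)$, writes $e=f(x)/g(x)$ with $f,g$ coprime, and observes that $f(x)-g(x)e$ is irreducible in $K(e)[x]$ by Gauss's Lemma, so $[F:K(e)]=\max(\deg f,\deg g)$, which is visibly the number of poles of $e$. You instead invoke the fundamental equality $\sum_{P\mid Q_\infty} r_Pf_P=[F:K(e)]$ for the extension $F/K(e)$, together with $f_P=1$ from algebraic closedness and $-v_P(e)=r_P$. Your argument is more conceptual and does not use $g=0$ at all, so it proves the lemma for any function field over an algebraically closed constant field; the paper's argument is more elementary, needing only Gauss's Lemma and an explicit pole count for a rational function of $x$. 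One small remark: your discussion of the case $e\in K$ is slightly off---in that case $[F:K(e)]=\infty$ while the pole count is $0$, so the lemma simply does not hold there (and the paper tacitly assumes $e\notin K$); otherwise your handling is fine.
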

	\Change{
\begin{proof}
	Let $x \in F$ such that $F = K(x)$, then $e = \frac{f(x)}{g(x)}$ with $f(x), g(x) \in K[x]$ coprime. 
		This gives us the algebraic relation $f(x) - g(x)e = 0$. The polynomial $f(x) - g(x)e$ is irreducible in $K[e][x]$, and therefore, by Gauss's Lemma, also in $K(e)[x]$. Hence, 
		\[ [F:K(e)] = \max(\deg(g(x), \deg(f(x))) = \textup{ number of poles of $e$ with multiplicity}. \qedhere \]
\end{proof}}
\subsection{Tools for Step 4}
When we know all the poles occurring at $e_1, \ldots, e_r$ we know exactly what the divisor $D_i$ of least degree such that $S_i \subset L(D_i)$ looks like for $1 \leq i \leq r$. However, it is possible that $n > r$. The following lemma helps us determine $D_{i+1}$ when $\gamma_{i+1}$ stays equal to $\gamma_i$. 
	\begin{tl}\label{Ldivisorgrowth}
	Suppose $\gamma_{i+1} = \gamma_{i}$ then, 
	\begin{enumerate}[\normalfont(i)]
		\item 	if $i\geq 2$ we have $D_{i+1} - D_i = D_{i} - D_{i-1}$.
		\item  if $i \geq 3$ and $\alpha \in K\cup \{\infty\}$ such that $v_{\alpha}(D_{i-1}) = 0$ we have $v_{\alpha}(D_i) = 0$.
	\end{enumerate}
\end{tl}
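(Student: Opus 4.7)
The plan is to extract from the hypothesis $\gamma_{i+1} = \gamma_i$ a precise algebraic relation, then analyze it at every place. Since $\dim S_{i+1}^2 - \dim S_i^2 = 2 + (\gamma_{i+1} - \gamma_i) = 2$, and both $e_i e_{i+1}$ and $e_{i+1}^2$ already lie outside $S_i^2$ by $v_\infty$-degree, every other product $e_j e_{i+1}$ with $j \leq i-1$ must lie in $S_i^2$. Expanding
\[
e_{i-1}\, e_{i+1} \;=\; \sum_{1 \leq j \leq k \leq i} c_{jk}\, e_j e_k,
\]
a comparison of $v_\infty$-degrees shows the coefficient $c_{ii}$ of $e_i^2$ is necessarily nonzero: otherwise the right-hand side would have $v_\infty$-degree at most $\deg e_i + \deg e_{i-1}$, strictly less than $\deg e_{i-1} + \deg e_{i+1}$. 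A side benefit is the identity $\deg e_{i+1} = 2 \deg e_i - \deg e_{i-1}$, which will re-enter in part (ii).

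For part (i), I will verify $v_P(D_{i+1}) - v_P(D_i) = v_P(D_i) - v_P(D_{i-1})$ at each place $P$. The upper bound is routine: taking $s \in S_{i-1}$ with $v_P(s) = -v_P(D_{i-1})$ and using $s\, e_{i+1} \in L(2 D_i)$ gives $v_P(e_{i+1}) \geq v_P(D_{i-1}) - 2 v_P(D_i)$, hence $v_P(D_{i+1}) \leq 2 v_P(D_i) - v_P(D_{i-1})$. The case $v_P(D_i) = v_P(D_{i-1})$ then closes because $v_P(D_{i+1}) \geq v_P(D_i)$. When $v_P(D_i) > v_P(D_{i-1})$, the valuation $v_P(e_i) = -v_P(D_i)$ is strictly smaller than all other $v_P(e_j)$ for $j \leq i$, so $e_i^2$ is the unique product $e_j e_k$ attaining $v_P = -2 v_P(D_i)$. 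Because $c_{ii} \neq 0$, the ultrametric inequality forces $v_P(e_{i-1}\, e_{i+1}) = -2 v_P(D_i)$; combined with $v_P(e_{i-1}) \geq -v_P(D_{i-1})$, this pins down $v_P(e_{i+1}) = v_P(D_{i-1}) - 2 v_P(D_i)$, which is precisely the matching lower bound.

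For part (ii) I argue by contradiction: suppose $v_\alpha(D_{i-1}) = 0$ but $v_\alpha(D_i) > 0$. Then (i) gives $v_\alpha(D_{i+1}) = 2 v_\alpha(D_i)$ and, since this is a newly introduced pole, $v_\alpha(e_{i+1}) = -2 v_\alpha(D_i)$. Using $e_{i+1} = 1 \cdot e_{i+1} \in S_i^2$, write $e_{i+1} = \sum d_{jk}\, e_j e_k$. At $P_\alpha$, since $v_\alpha(e_j) \geq 0$ for $j \leq i-1$, only the term $d_{ii} e_i^2$ can attain valuation $-2 v_\alpha(D_i)$, so $d_{ii} \neq 0$. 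But at $P_\infty$, the identity $\deg e_{i+1} = 2 \deg e_i - \deg e_{i-1}$ together with $\deg e_{i-1} \geq 1$ (using $i \geq 3$) forces $\deg e_{i+1} < 2 \deg e_i$, and a leading-degree comparison then forces $d_{ii} = 0$, yielding the contradiction. The main technical hurdle throughout is the reverse inequality in (i) at places with $v_P(D_i) > v_P(D_{i-1})$: promoting a one-sided pole bound to a full divisor equality requires the coefficient information $c_{ii} \neq 0$, extracted from $v_\infty$-degrees, transported via the ultrametric property to every other place.
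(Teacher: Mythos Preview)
Your proof is correct. The argument for part (ii) is essentially identical to the paper's, but for part (i) you take a somewhat different route, so let me briefly compare.

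The paper proves (i) by first establishing the \emph{equality of spaces} $S_i^2 = S_{i-1}S_{i+1}$. From the codimension diagram it observes that $e_i^2 \in S_{i-1}S_{i+1}$, hence $S_i^2 \subset S_{i-1}S_{i+1}$, and equality follows by dimension count. The divisor identity then drops out in one line from $\min v_\alpha(S_i^2) = \min v_\alpha(S_{i-1}S_{i+1})$, since $\min v_\alpha(AB) = \min v_\alpha(A) + \min v_\alpha(B)$ for any product space.

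You instead work only with the containment $S_{i-1}e_{i+1} \subset S_i^2$ and the single relation $e_{i-1}e_{i+1} = c_{ii}e_i^2 + (\text{lower terms})$. This gives the upper bound on $v_P(D_{i+1})$ immediately, but forces you to split into cases for the lower bound: when $v_P(D_i) > v_P(D_{i-1})$ you need the coefficient information $c_{ii}\neq 0$ (extracted at $P_\infty$) transported via the ultrametric to $P$. This is a perfectly valid and self-contained argument; its advantage is that it isolates exactly which coefficient drives the result, which you then reuse in (ii). The paper's approach buys brevity and a cleaner structural statement ($S_i^2 = S_{i-1}S_{i+1}$) that avoids any case analysis.

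One small point of phrasing: when you write ``this pins down $v_P(e_{i+1}) = v_P(D_{i-1}) - 2v_P(D_i)$'', the ultrametric step alone only gives $\leq$ (since you only know $v_P(e_{i-1}) \geq -v_P(D_{i-1})$, not equality); the matching $\geq$ comes from your earlier upper-bound paragraph. The logic is all there, but the sentence reads as if the single relation suffices.
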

\begin{proof}
	We start by proving the first statement. Note that the co-dimension of $S_{i-1}S_i$ inside $S_i^2$ is $1$, since $e_i^2$ is the only element of $E_i$ not in $S_{i-1}S_i$. Because $\gamma_{i+1} = \gamma_i$, we also know that the co-dimension of $S_i^2$ inside $S_iS_{i+1}$ equals one. Hence, we find a co-dimension diagram as shown in Figure \ref{Fallisone}, \Change{where the number on an arrow from some space $A$ to some space $B$ indicates the co-dimension of $A$ in $B$.}
		\begin{figure}[h]
		\begin{center}
			\begin{tikzcd}[row sep=6ex, column sep=3ex]
				S_i^2 \arrow[r, "1"]                       & S_iS_{i+1}                             \\
				S_{i-1}S_{i} \arrow[u, "1"] \arrow[r, "1"] & S_{i-1}S_{i+1} \arrow[u, "1"]
			\end{tikzcd}\caption{The co-dimensions are all equal to $1$.}\label{Fallisone}
		\end{center}
	\end{figure}\\
	This implies that 
	$$S_iS_{i+1} = \langle S_{i-1}S_{i+1}, e_ie_{i+1}, e_i^2 \rangle = \langle S_i^2, e_{i}e_{i+1} \rangle,$$
	and thus $e_i^2 \in S_{i-1}S_{i+1}$. We find that $S_i^2 \subset S_{i-1}S_{i+1}$ and since they have the same dimension it must hold that $S_i^2 = S_{i-1}S_{i+1}$.  

	For any $\alpha \in K\cup\{\infty\}$ it now holds that $\min v_\alpha(S_i^2) = \min v_{\alpha}(S_{i-1}S_{i+1})$ and thus,
	$$\min v_{\alpha}(S_{i+1}) - \min v_{\alpha}(S_i) = \min v_{\alpha}(S_i) - \min v_\alpha(S_{i-1}).$$
	Recalling that $v_\alpha(D_j) = -\min v_\alpha(S_j)$ for any $1 \leq j \leq n$, we can conclude the first statement.
	%		We prove this for every pole $\alpha \in K\cup\{\infty\}$ separately. Translate the basis $(e_1, \ldots, e_{i+1})$ of $S_{i+1}$ such that $v_\alpha(e_1) \geq v_\alpha(e_2) \geq \ldots \geq v_\alpha(e_{i+1})$, this is always possible in such a way that $\deg(e_j)$ stays the same for each $1 \leq j \leq i+1$. In particular we get that $v_{\alpha}(e_j) = -v_\alpha(D_j)$. 
	%		
	%		Consider $S_{i-1}S_{i+1}$, then $e_{i-1}e_{i+1}$ has highest degree and most number of poles at $\alpha$ and therefore is certainly needed in the basis of $S_{i-1}S_{i+1}$. The same holds for $e_i^2$ in $S_{i}^2$. However, they are never both in a basis of $S_{i+1}^2$ and since $e_ie_{i+1}$ and $e_{i+1}^2$ definitely have a higher degree than both, there must be some relation 
	%		$$e_{i-1}e_{i+1}  = e_i^2 + \sum_{1 \leq j \leq i - 1, 1 \leq k \leq i} a_{j, k}e_je_k + \sum_{1 \leq j \leq i-2} a_{j, i+1}e_je_{i+1}.$$ 
	%		This shows $v_\alpha(e_{i-1}e_{i+1}) \geq v_\alpha(e_i)^2$, with equality when $v_\alpha(e_{i-1}) > v_\alpha(e_i)$. When $v_\alpha(e_{i-1}) = v_\alpha(e_i)$ we get $v_\alpha(e_{i+1}) \geq v_\alpha(e_i)$, but we know this can only be an equality by how we chose our basis. Hence, we always get $v_\alpha(e_{i+1}) - v_\alpha(e_{i}) = v_\alpha(e_i) - v_\alpha(e_{i-1})$. 
	
	Suppose $v_\alpha(D_i) = s > 0$ then, by the first statement, $e_{i+1}$ has $2s$ poles at $\alpha$, which is more than any element in $S_{i-1}S_i$ has. We know $e_{i+1}$ cannot be needed for the basis of $S_{i+1}^2$, since $e_ie_{i+1}$ and $e_{i+1}^2$ are already needed. This implies that $\deg(e_{i+1}) = \deg(e_i^2)$, which contradicts that $\deg(e_{i+1}) - \deg(e_i) = \deg(e_i) - \deg(e_{i-1})$. 
\end{proof}	
\begin{tr}
	The first part of Lemma \ref{Ldivisorgrowth} can be seen as a generalization of Lemma 7.10 in \cite{bachoc2018funcfieldfreiman}.
\end{tr}

\section{From $0$ to $\gamma$} \label{Sgam0}
In this section we will prove Theorem \ref{Tmain} when $\{\gamma_i \mid 1 \leq i \leq n\} = \{0, \gamma\}$, or in other words, the sequence $\gamma_i$ jumps directly from $0$ to $\gamma$. We also assume that $\gamma \geq 2$, which we may do because the conjecture is already proved for $\gamma = 1$. As mentioned before, we know that $t \geq \gamma -\gamma_{t-1} + 2 = \gamma + 2$ and we will consider the cases I. $t \geq \gamma + 3$ and II. $t = \gamma + 2$ separately. 

\subsubsection*{I. The case $t \geq \gamma + 3$}
The first step is to determine all the possible degrees of $e_1, \ldots, e_t$, which is done by the following proposition. 
\begin{tp}
	The degree of $e_i$ for $1 \leq i \leq t-1$ is equal to $i-1$ and the degree of $e_t$ is equal to $t-2 + \Delta$ for some integer $\Delta \in [1, \gamma + 1]$. 
\end{tp}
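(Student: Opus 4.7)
The plan is to combine Theorem \ref{Tknownbasis}(i) applied to the genus-zero space $S_{t-1}$ with Freiman's $3k-4$ Theorem applied additively to $\deg(S_t)\subset\Z$.

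First I would observe that $\gamma_{t-1}=0$ and $\dim S_{t-1}=t-1\geq\gamma+2\geq 4$. Since $(\gamma_i)$ jumps only once, the quantity $\delta$ from Lemma \ref{Lboundedfieldextension} equals $\gamma$, and $t-1\geq\delta+2$ then gives $F=K(S_{t-1})$. Theorem \ref{Tknownbasis}(i) thus supplies a generator $y$ of $F$ with $S_{t-1}=\langle 1,y,y^2,\ldots,y^{t-2}\rangle$, and by Lemma \ref{Lgcd} I may assume the distinguished valuation has been chosen so that $\gcd(\deg(S_t))=1$. The normalization $e_1=1$ forces $\max v_\infty(S_{t-1})=0$, which excludes $v_\infty(y)\geq 0$ (otherwise $y^{t-2}$, respectively $y-y(\infty)$, would have positive $v_\infty$-valuation in $S_{t-1}$). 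Hence $d:=-v_\infty(y)\geq 1$ and $\deg(S_{t-1})=\{0,d,2d,\ldots,(t-2)d\}$.

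Next I would apply Freiman's $3k-4$ Theorem to $A:=\deg(S_t)$. One has $|A|=t$ and $A+A\subset\deg(S_t^2)$, hence $|A+A|\leq\dim S_t^2=2t+\gamma-1$; the hypothesis $t\geq\gamma+3$ is exactly the inequality $|A+A|\leq 3|A|-4$, so Freiman applies and $A$ lies in an arithmetic progression of length at most $|A+A|-|A|+1\leq t+\gamma$. Because $\gcd(A)=1$ and $0\in A$, this progression must be $\{0,1,\ldots,L-1\}$ with $L\leq t+\gamma$, so $\deg(e_t)\leq t+\gamma-1$.

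To finish, I would eliminate the case $d\geq 2$: then $\deg(e_{t-1})=(t-2)d\geq 2t-4$ and $\deg(e_t)>\deg(e_{t-1})$ give $\deg(e_t)\geq 2t-3$, which combined with $\deg(e_t)\leq t+\gamma-1$ yields $t\leq\gamma+2$, contradicting $t\geq\gamma+3$. Hence $d=1$, so $\deg(e_i)=i-1$ for $1\leq i\leq t-1$, and $t-1\leq\deg(e_t)\leq t+\gamma-1$ gives $\deg(e_t)=t-2+\Delta$ with $\Delta\in[1,\gamma+1]$. The only genuinely delicate point I anticipate is the simultaneous handling of the field-theoretic generator $y$ from Theorem \ref{Tknownbasis} and the analytic data of $v_\infty$: Lemma \ref{Lgcd} is essential to pin down $\gcd(\deg(S_t))=1$, after which everything reduces to arithmetic in $\Z$.
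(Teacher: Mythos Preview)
Your proof is correct and follows the same approach as the paper: establish $F=K(S_{t-1})$ via Lemma~\ref{Lboundedfieldextension}, invoke Theorem~\ref{Tknownbasis}(i) for the degrees of $e_1,\ldots,e_{t-1}$, and apply Freiman's $3k-4$ Theorem to bound $\deg(e_t)$. The only difference is that you spell out the step $d=1$ explicitly (via Lemma~\ref{Lgcd} and the Freiman bound), whereas the paper absorbs this into its conventions by taking the generator produced by Theorem~\ref{Tknownbasis}(i) as the coordinate defining $P_\infty$; your version is a bit more self-contained, but not a different argument.
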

\begin{proof}
	By Lemma \ref{Lboundedfieldextension} we know that $F = K(S_{\gamma + 2}) = K(S_{t-1})$. Hence, the degree of $e_i$ for $i \leq t-1$ is determined by its degree in $K(S_{t-1})$. Since we are given that $\gamma_{t-1} = 0$ and $t-1 \geq 3$, we can use Theorem \ref{Tknownbasis} to conclude $S_{t-1} \subset L((t-2)P_\infty)$ and therefore $\deg(e_i) = i-1$ for all $1 \leq i \leq t-1$. The second statement now follows directly from Freiman's $3k-4$ Theorem. 
\end{proof}
Next we want to find relations between the basis elements of $S_t$ to be able to determine their poles. These two steps both occur in the proof of the next proposition. 
\begin{tp}\label{Pgamma0gen}
	The divisor $D_t$ takes the form $(t-2+\Delta)P_\infty + P_{\alpha_1} + \ldots + P_{\alpha_{k}}$ with $k  = \gamma - \Delta + 1$ and $\alpha_i \in K$ not necessarily distinct. 
\end{tp}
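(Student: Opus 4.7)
The plan is to normalize the basis of $S_{t-1}$, write $e_t$ as a rational function in lowest terms, and then compute $\dim S_t^2$ in two different ways to pin down the number of finite poles of $e_t$. Since $\gamma_{t-1}=0$ and $t-1\geq\gamma+2\geq 3$, Lemma~\ref{Lboundedfieldextension} gives $F=K(S_{t-1})$, and Theorem~\ref{Tknownbasis}(i) then lets me choose $x$ so that $F=K(x)$ and $e_i=x^{i-1}$ for $1\leq i\leq t-1$; in particular $S_{t-1}=\mathfrak{p}_{t-2}$ and $S_{t-1}^2=\mathfrak{p}_{2t-4}$. Every element of $S_{t-1}$ is then a polynomial in $x$, so the finite poles of elements of $S_t$ can only come from $e_t$. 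Writing $e_t=A/B$ in lowest terms with $A,B\in K[x]$, the pole order at $P_\infty$ forces $\deg A-\deg B=t-2+\Delta$, and $m:=\deg B$ equals the total multiplicity of the finite poles of $e_t$. The proposition reduces to showing $m=\gamma-\Delta+1$.

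The main step is the decomposition $S_t^2=\mathfrak{p}_{2t-4}+\mathfrak{p}_{t-2}\cdot e_t+K\cdot e_t^2$, immediate from $S_t=\mathfrak{p}_{t-2}\oplus Ke_t$. The crucial input is the intersection $I:=\mathfrak{p}_{2t-4}\cap\mathfrak{p}_{t-2}e_t$: for $f\in\mathfrak{p}_{t-2}$, the product $fe_t=fA/B$ lies in $K[x]$ iff $B\mid f$ (by coprimality of $A$ and $B$), and the degree bound $\leq 2t-4$ then forces $\deg f\leq t-2-\Delta$. In the regime $m\leq t-2-\Delta$ this yields $\dim I=t-1-\Delta-m$, hence $\dim(\mathfrak{p}_{2t-4}+\mathfrak{p}_{t-2}e_t)=2t-3+\Delta+m$. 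A short coprimality/degree check rules out $e_t^2$ lying in this sum (if it did, $B^2\mid A(A-fB)$ would force $B\mid A$ unless $m=0$, in which case the degree $2t-4+2\Delta$ of $e_t^2$ still exceeds every element of the sum since $\Delta\geq 1$), so $\dim S_t^2=2t-2+\Delta+m$. Equating with the known value $2t-1+\gamma$ yields $m=\gamma+1-\Delta$, exactly as desired.

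The step I expect to be the main obstacle is excluding the complementary regime $m\geq t-1-\Delta$. In that regime $I$ vanishes, so $\dim(\mathfrak{p}_{2t-4}+\mathfrak{p}_{t-2}e_t)=3t-4$; since $\dim S_t^2=2t-1+\gamma\leq 3t-4$ (using $\gamma\leq t-3$), this forces $t=\gamma+3$ and $e_t^2\in\mathfrak{p}_{2t-4}+\mathfrak{p}_{t-2}e_t$. The resulting identity $e_t^2=f(x)e_t+g(x)$ with $f\in\mathfrak{p}_{t-2}$ and $g\in\mathfrak{p}_{2t-4}$ exhibits $e_t$ as a root of a monic quadratic over $K[x]$; since $K$ is algebraically closed and $K[x]$ is a UFD, any $K(x)$-square of an element of $K[x]$ is already a $K[x]$-square, so the discriminant $f^2+4g$ is a polynomial square and $e_t\in K[x]$. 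This forces $m=0$, contradicting $m\geq t-1-\Delta\geq 1$ (which holds because $\Delta\leq\gamma+1$). Hence only the regime of the previous paragraph can occur, and the asserted form of $D_t$ follows by taking the $\alpha_i\in K$ to be the zeros of $B$ listed with multiplicity.
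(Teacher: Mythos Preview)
Your argument is correct and takes a different route from the paper's. The paper works within its ``needed elements'' framework: it notes that exactly $\gamma+2$ of the products $e_ie_t$ are needed for $S_t^2$ over $S_{t-1}^2$, identifies $\Delta+1$ of them by degree (Proposition~\ref{Pneeded}(i)), then introduces the least $k$ for which a relation $\bigl(\sum_{i\le k+1}a_ie_i\bigr)e_t\in\mathfrak p_{2t-4}$ appears, reads off from it that $e_t$ has exactly $k$ finite poles, and closes the count $k+\Delta+1=\gamma+2$. You instead compute $\dim S_t^2$ directly from the decomposition $\mathfrak p_{2t-4}+\mathfrak p_{t-2}e_t+Ke_t^2$, using the divisibility criterion $B\mid f$ to evaluate the intersection. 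Your route is more self-contained and makes the role of $m=\deg B$ transparent; the cost is having to dispose of the boundary regime $m\ge t-1-\Delta$ by a separate integrality argument. The paper avoids this split because its relation $pe_t\in\mathfrak p_{2t-4}$ with $\deg p=k$ automatically forces $k+t-2+\Delta\le 2t-4$, i.e.\ $k\le t-2-\Delta$, so the complementary regime never arises.

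One minor point: in that boundary regime you conclude $e_t\in K[x]$ via the discriminant $f^2+4g$ being a polynomial square, which as written invokes the quadratic formula and hence tacitly assumes characteristic $\neq 2$. The fix is immediate and you already have it in hand: either reuse your coprimality argument from the main regime (from $A^2=fAB+gB^2$ one gets $B\mid A^2$, so $B$ is a unit), or simply note that $e_t\in K(x)$ satisfies a monic quadratic over $K[x]$ and $K[x]$ is integrally closed.
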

\begin{proof}
	There are exactly $\gamma + 2$ elements needed from $\{e_ie_t \mid 1\leq i \leq t\}$ for the basis of $S_t^2$. By Proposition \ref{Pneeded}(i) we know that $e_ie_t$ is always needed for $i \leq t - 1 - \Delta$, giving us $\Delta + 1$ elements. Furthermore, there must be at least one element that is not needed. We define $k$ to be the smallest positive integer such that $e_{k+1}e_t$ is an element of 
	\begin{equation} \label{EQspace}
		\langle S_{t-1}^2, e_1e_t, \ldots, e_{k}e_t\rangle, 
	\end{equation}
	then $k+1 < t $. This implies that there exists $a_1, \ldots, a_{k+1} \in K$ such that
	\[	\left(\sum_{i=1}^{k+1} a_ie_i\right)e_t \in S_{t-1}^2 = \mathfrak p_{2t-4}.	 \]
	Since $e_i$ has exactly degree $i-1$, we find that $e_t$ can have at most $k$ poles outside of infinity. Furthermore, because $k$ is defined as the smallest integer for which such a relation holds, we find that $e_t$ must have exactly $k$ of such poles. This implies that any element $e_ie_t$ of degree smaller or equal to $2t-4$ must be in (\ref{EQspace}) hence, whenever $i \leq t - 1-\Delta$. Hence, we get $k$ basis elements for small $i$ and $\Delta + 1$ for big $i$ and find $\Delta + k + 1  = \gamma + 2$ or, equivalently, $k = \gamma - \Delta + 1$. This shows that $D_t = (t-2+\Delta)P_\infty + P_{\alpha_1} + \ldots + P_{\alpha_{k}}$. 
\end{proof}

In the last step we find that $n$ must always equal $t$ and may conclude that the conjecture does indeed hold in this case.

\begin{tp}\label{Pfirsttisn}
	We have $n = t$ and thus $S \subset L(D)$ with $\deg D = n + \gamma - 1$.
\end{tp}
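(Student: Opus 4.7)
The plan is to argue by contradiction: suppose $n > t$ and derive an inconsistency by combining the two parts of Lemma \ref{Ldivisorgrowth} with Theorem \ref{T3k4}. Under our standing assumption $\{\gamma_i \mid 1 \leq i \leq n\} = \{0,\gamma\}$ and the monotonicity from Lemma \ref{Lgrowinggenus}, the extra basis vector $e_{t+1}$ must satisfy $\gamma_{t+1} = \gamma_t = \gamma$, so both parts of Lemma \ref{Ldivisorgrowth} are available at the index $i = t$ (the side condition $i \geq 3$ holding since $t \geq \gamma + 3 \geq 5$).

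First I apply Lemma \ref{Ldivisorgrowth}(ii) to restrict the shape of $D_t$. Because $\gamma_{t-1} = 0$ and $t-1 \geq 4$, Theorem \ref{Tknownbasis} (as already invoked in the preceding proposition) gives $S_{t-1} = \mathfrak{p}_{t-2}$, so $D_{t-1} = (t-2)P_\infty$ and $v_{P_\alpha}(D_{t-1}) = 0$ for every $\alpha \in K$; the lemma then forces $v_{P_\alpha}(D_t) = 0$ for every such $\alpha$, i.e.\ $D_t$ is supported only at $P_\infty$. Comparing with the expression $D_t = (t-2+\Delta)P_\infty + P_{\alpha_1} + \cdots + P_{\alpha_k}$ from Proposition \ref{Pgamma0gen} (with $k = \gamma - \Delta + 1$) therefore pins down $k = 0$ and $\Delta = \gamma + 1$. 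Next, Lemma \ref{Ldivisorgrowth}(i) yields $D_{t+1} = 2D_t - D_{t-1} = (t + 2\gamma)P_\infty$, so $\deg e_{t+1} = t + 2\gamma$ and
\[ A := \deg S_{t+1} = \{0,1,\ldots,t-2,\ t+\gamma-1,\ t+2\gamma\}. \]

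The final step is an application of Theorem \ref{T3k4} to the integer set $A$. We have $|A| = t+1$ and, because $0,1 \in A$, any arithmetic progression containing $A$ has common difference $1$, and hence length at least $t + 2\gamma + 1$. On the other hand, $|A + A| \leq \dim S_{t+1}^2 = 2t + \gamma + 1 \leq 3(t+1) - 4$ (using $t \geq \gamma + 3$), so Theorem \ref{T3k4} bounds any AP containing $A$ by $|A+A| - |A| + 1 \leq t + \gamma + 1$; comparing gives $\gamma \leq 0$, contradicting $\gamma \geq 2$. Once $n = t$ is established, the conclusion $\deg D = \deg D_t = t + \gamma - 1 = n + \gamma - 1$ is immediate from Proposition \ref{Pgamma0gen}.

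The main obstacle I expect is conceptual rather than computational: one must notice that the hypothesis $n > t$ activates both parts of Lemma \ref{Ldivisorgrowth} at $i = t$ simultaneously, with part (ii) killing the a priori case $k \geq 1$ and part (i) setting up the Freiman contradiction in the surviving case $k = 0$. Once this split is spotted, each individual verification is only a few lines.
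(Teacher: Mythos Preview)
Your proof is correct and follows essentially the same route as the paper's: assume $n>t$, use Lemma~\ref{Ldivisorgrowth}(ii) at $i=t$ to force $k=0$ (hence $\Delta=\gamma+1$), then Lemma~\ref{Ldivisorgrowth}(i) to get $\deg e_{t+1}=t+2\gamma$, and finish with Freiman's $3k-4$ Theorem. The paper compresses all of this into two sentences, while you spell out the side conditions and the Freiman computation explicitly, but the logical content is identical.
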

\begin{proof}
	We prove this by contradiction, so assume $n > t$ then we must have the basis element $e_{t+1}$ and $\gamma_{t+1} = \gamma_t$. By Lemma \ref{Ldivisorgrowth} we find that $k$ must equal $0$ and that $e_{t+1}$ is of degree $t-2+2\Delta = t + 2\gamma$. This contradicts Freiman's $3k-4$ Theorem whenever $\gamma \geq 2$. 
\end{proof}

\subsubsection*{II. The case $t = \gamma + 2$}	
We may immediately consider the space $S_{t+1} = S_{\gamma + 3}$, since we know $n \geq \gamma + 3$. The first step is determining the degrees of $e_1, \ldots, e_{t+1}$. 
\begin{tp}
	The set of degrees of $S_{\gamma + 3}$ equals either $\{0, 1, 2, \ldots, \gamma + 2\}$ or \\$\{0, 2, 4, \dots, 2\gamma, 2\gamma+1, 2\gamma + 2\}$. 
\end{tp}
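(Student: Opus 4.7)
My plan is to apply Freiman's $3k-4$ Theorem to the sumset $A := \deg(S_{\gamma+3})$, after normalizing the valuation $v_\infty$ and pinning down the bottom of the filtration via Theorem~\ref{Tknownbasis}.

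First, I carry out two preparations. The corollary to Lemma~\ref{Lboundedfieldextension} gives $F = K(S_t) = K(S_{\gamma+2})$, so Lemma~\ref{Lgcd} lets me replace $v_\infty$ with a valuation for which $\gcd(\deg S_t) = 1$, and hence $\gcd(A) = 1$. Since $\gamma_{t-1} = 0$ and $\dim S_{t-1} = \gamma+1 \geq 3$ (as $\gamma \geq 2$), Lemma~\ref{Lgrowinggenus} gives genus $0$ for $K(S_{t-1})$, and Theorem~\ref{Tknownbasis}(i) applied inside that subfield produces a generator $y$ of $K(S_{t-1})$ with basis $1, y, \ldots, y^\gamma$. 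Writing $e \geq 1$ for the ramification index of $v_\infty$ over $K(y)$, the first $\gamma+1$ entries of the filtered basis of $S_{\gamma+3}$ have $F$-degrees $0, e, 2e, \ldots, \gamma e$.

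Next, because $\deg(e_i e_j) = d_i + d_j$, we have $\deg(S^2) = A + A$ and hence $|A+A| = \dim(S_{\gamma+3}^2)$. The hypothesis $\{\gamma_i\} = \{0, \gamma\}$ combined with the monotonicity in Lemma~\ref{Lgrowinggenus} forces $\gamma_{\gamma+3} = \gamma$, so $|A+A| = 2(\gamma+3)-1+\gamma = 3|A| - 4$, the boundary case of Freiman's $3k-4$ Theorem. Freiman now places $A$ in an arithmetic progression of length at most $2\gamma+3$, and $\gcd(A) = 1$ together with $0 \in A$ pins the common difference at $1$. Thus $A \subseteq \{0, 1, \ldots, 2\gamma+2\}$, and since $\gamma e \in A$ we get $\gamma e \leq 2\gamma + 2$, forcing $e \in \{1, 2\}$.

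The two cases of the Proposition correspond to the two values of $e$. When $e = 2$, the $\gamma+1$ even degrees $\{0, 2, \ldots, 2\gamma\}$ already lie in $A$, and the two remaining basis degrees must strictly exceed $2\gamma$ and be at most $2\gamma+2$, so they are exactly $2\gamma+1$ and $2\gamma+2$, yielding the second shape in the Proposition. When $e = 1$, the $\gamma+1$ consecutive degrees $\{0, 1, \ldots, \gamma\}$ lie in $A$, and the two further degrees $\deg e_t < \deg e_{t+1}$ lie in $\{\gamma+1, \ldots, 2\gamma+2\}$; I would pin them down as $\gamma+1$ and $\gamma+2$ by a careful inclusion-exclusion count of the new sumset contributions from $\deg e_t + \{0, \ldots, \gamma\}$, $\deg e_{t+1} + \{0, \ldots, \gamma\}$, and the three square terms $\{2\deg e_t,\ \deg e_t + \deg e_{t+1},\ 2\deg e_{t+1}\}$, matching the total against the forced value $|A+A| = 3\gamma + 5$. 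This last step is the main obstacle, since Freiman alone allows several candidate pairs; the identification must come from the exact sumset equality rather than from mere containment in a length-$(2\gamma+3)$ AP.
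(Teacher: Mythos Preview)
Your setup through $e \in \{1,2\}$ is sound, and the $e=2$ case is correctly handled: once $\{0,2,\ldots,2\gamma\} \subseteq A \subseteq \{0,\ldots,2\gamma+2\}$, the two remaining degrees are forced. The gap lies entirely in the $e=1$ case. The equality $|A+A| = 3\gamma+5$ that you plan to exploit is \emph{not} forced: what is forced is $\dim S_{\gamma+3}^2 = 3\gamma+5$, and one only has $A+A \subseteq \deg(S_{\gamma+3}^2)$, hence merely $|A+A| \leq 3\gamma+5$. Indeed, for the target configuration $A = \{0,1,\ldots,\gamma+2\}$ itself one has $|A+A| = 2\gamma+5 < 3\gamma+5$, so no sumset-size identity can single it out. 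Writing $\deg e_t = \gamma+p$ and $\deg e_{t+1} = \gamma+q$ with $1 \leq p < q \leq \gamma+2$, a direct check shows that $|A+A| \leq 3\gamma+5$ holds for many pairs besides $(1,2)$ --- for $\gamma = 2$, for instance, all of $(1,3),(1,4),(2,3),(2,4)$ pass --- so Freiman alone cannot finish.

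The paper supplies the missing constraint via Lemma~\ref{Ldivisorgrowth}(i): since $\gamma_{t+1} = \gamma_t$, one has $D_{t+1} - D_t = D_t - D_{t-1}$, and evaluating at $P_\infty$ gives $\deg e_{t+1} - \deg e_t = \deg e_t - \deg e_{t-1} =: k$. This collapses the two unknown top degrees to a single parameter, so that $\deg(S_{\gamma+3}) = \{0,d,\ldots,\gamma d,\gamma d+k,\gamma d+2k\}$. The final identification $(d,k) \in \{(1,1),(2,1)\}$ then comes not from Freiman but from Proposition~\ref{Pneeded}: the element $e_{t-2}e_{t+1}$ is not needed for $S_{t+1}^2$ and hence must already lie in $S_t^2$, which forces its degree $(2\gamma-1)d + 2k$ to equal one of $2\gamma d$ or $2\gamma d + k$. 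Your sumset inclusion-exclusion cannot substitute for this structural input coming from $S_i^2 = S_{i-1}S_{i+1}$.
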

\begin{proof}
	By Lemma \ref{Ldivisorgrowth} we may define positive integers $d$ and $k$ such that the degree of $e_i$ is $(i-1)d$ when $i \leq \gamma + 1$ and $\gamma d + (i-\gamma - 1)k$ when $i = \gamma + 2, \gamma + 3$. By Lemma \ref{Lgcd} we find that either $d$ or $k$ has to equal $1$. This gives us the degree table as shown in Table (\ref{TABgamma0}). 
	
		\begin{table} [h] 
		\begin{center}
			\begin{tabular}{c | c | c | c | c | c | c | c}
				&$e_1$&$e_2$&$e_3$& $\ldots$ &$e_{t-1}$& $e_{t}$ & $e_{t+1}$ \\\hline
				$e_1$& $0$ & $d$& $2d$ & $\ldots$ &$\gamma d$ & $\gamma d + k$ & $\gamma d + 2k$ \\\hline
				$e_2$& - & $2d$ & $3d$& $\ldots$ & $(\gamma + 1)d$ & $(\gamma+1)d + k$ & $(\gamma + 1)d + 2k$ \\\hline
				$e_3$& - & - & $4d$& $\ldots$ & $(\gamma + 2)d$ & $(\gamma + 2)d + k$ & $(\gamma + 2)d + 2k$ \\\hline
				$\vdots$& $\vdots$ & $\vdots$ & $\vdots$& $\ddots$ & $\vdots$ & $\vdots$ & $\vdots$ \\\hline
				$e_{t-2} $& - & - & -& $\ldots$ & $(2\gamma - 1)d$ & $(2\gamma - 1)d + k$ & $(2\gamma - 1) d+ 2k$ \\\hline
				$e_{t-1}$ & - & - & - & $\ldots$ & $2\gamma d$ & $2\gamma d + k$ & $2\gamma d + 2k$\\\hline
				$e_{t}$ &-&-&-&$\ldots$&-&$2\gamma d + 2k$ & $2\gamma d + 3k$\\\hline
				$e_{t + 1}$ & - & - & - & $\ldots$ & -& -& $2\gamma d + 4k$\\
			\end{tabular}
			\caption{The degree table of $S_{t + 1}^2$. In each cell the degree of the product $e_{i}e_j$ is written.}\label{TABgamma0}
		\end{center}
	\end{table}
	Because $\gamma_t = \gamma_{t+1}$ we know that exactly two elements of $E_{t+1}$ are needed for the basis of $S_{t+1}^2$, namely $e_{t}e_{t+1}$ and $e_{t+1}^2$. In particular this implies that $e_{t-2}e_{t+1}$ should be in the space $S_t^2$. By Proposition \ref{Pneeded}(ii) either two elements in $E_{t}$ have an equal degree greater than $\deg(e_{t-2}e_{t+1})$ or $\deg(e_{t-2}e_{t+1})$ is in $\deg(S_t^2)$. The first option cannot happen because $2\gamma d, 2\gamma d + k$ and $2 \gamma d + 2k$ are all distinct. We find that one of the following two equations must hold
	\[(2\gamma - 1)d + 2k = 2\gamma d ~~~~~ \textup{ or } ~~~~~ (2\gamma - 1)d + 2k = 2\gamma d + k.\]
	We conclude that $(d, k)$ is equal to either $(2, 1)$ or $(1, 1)$. The lemma now follows.
\end{proof}	

We will now distinguish between the two possible sets of degrees that $S_{\gamma + 3}$ can have and determine the possible poles for both cases. 

\begin{tp}
	Suppose the set of degrees of $S_{\gamma + 3}$ equals $\{0, 2, 4, \ldots, 2\gamma, 2\gamma + 1, 2\gamma + 2\}$. Then $S \subset L((n + \gamma - 1)P_\infty)$. 
\end{tp}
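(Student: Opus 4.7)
The plan is to first pin down the structure of $S_{\gamma+3}$ and then apply Lemma~\ref{Ldivisorgrowth} to force $e_{\gamma+2}$ to have only poles at $P_\infty$.

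First, since $\gamma_{\gamma+1}=0$ and $\dim S_{\gamma+1}=\gamma+1\geq 3$, Theorem~\ref{Tknownbasis}(i) applied to $S_{\gamma+1}$ produces $y\in F$ with $K(S_{\gamma+1})=K(y)$ and $e_i=y^{i-1}$ for $1\leq i\leq\gamma+1$. Lemma~\ref{Lpoles} combined with $\deg e_i=2(i-1)$ forces both poles of $y$ in $F$ to lie at $P_\infty$, so $y\in K[x]$ is a polynomial of degree~$2$; after an affine change of $x$ I may take $y=x^2$. This gives the $K(y)$-basis $\{1,x\}$ of $F$, so every element decomposes uniquely as $P(y)+xQ(y)$ with $P,Q\in K(y)$, the two summands having even, respectively odd, $P_\infty$-order. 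Writing $e_{\gamma+2}=A(y)+xB(y)$, the oddness of $\deg e_{\gamma+2}=2\gamma+1$ forces $B\neq 0$ with $B\in K(y)$ of rational-function degree $\gamma$.

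Next I will show that $S_{\gamma+3}=\langle 1,y,\ldots,y^{\gamma+1}\rangle\oplus K\,e_{\gamma+2}$. Because $\gamma_{\gamma+3}=\gamma_{\gamma+2}$, exactly two elements of $E_{\gamma+3}\setminus E_{\gamma+2}$ are needed for $S_{\gamma+3}^2$, and Proposition~\ref{Pneeded}(i) forces these to be $e_{\gamma+2}e_{\gamma+3}$ and $e_{\gamma+3}^2$ (their degrees $4\gamma+3$ and $4\gamma+4$ are the only ones exceeding $\max\deg S_{\gamma+2}^2=4\gamma+2$). A degree comparison then places $e_{\gamma+3}=e_1\cdot e_{\gamma+3}$ inside $S_{\gamma+2}^2$, so
\[
  e_{\gamma+3}=f(y)+g(y)\,e_{\gamma+2}+c\,e_{\gamma+2}^2
\]
for some polynomial $f(y)$ of degree $\leq 2\gamma$, polynomial $g(y)$ of degree $\leq\gamma$, and $c\in K$. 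The coefficient $c$ must vanish, because the $K(y)$-part of the right side would otherwise contain $c\,yB(y)^2$ of $P_\infty$-order $4\gamma+2>2\gamma+2$ with no partner to cancel it. With $c=0$, the $xK(y)$-part of $e_{\gamma+3}$ equals $x\,g(y)B(y)$, whose $P_\infty$-order is odd and equals $1+2(\deg_y g+\gamma)$; since this must fall strictly below $\deg e_{\gamma+3}=2\gamma+2$, we get $\deg_y g\leq 0$, so $g\in K$ is constant. Replacing $e_{\gamma+3}$ by $e_{\gamma+3}-g\,e_{\gamma+2}$ (which lies in the same coset modulo $S_{\gamma+2}$ and has the same degree) and then absorbing lower-order polynomial-in-$y$ terms that already lie in $S_{\gamma+1}$, I may take $e_{\gamma+3}=y^{\gamma+1}$, yielding the claimed decomposition of $S_{\gamma+3}$.

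Finally, I apply Lemma~\ref{Ldivisorgrowth}. Each of $1,y,\ldots,y^{\gamma+1}$ lies in $K[x]$, so for any $\beta\in K$ we have $v_{P_\beta}(D_{\gamma+1})=0$ and $v_{P_\beta}(D_{\gamma+2})=v_{P_\beta}(D_{\gamma+3})=\max(0,-v_{P_\beta}(e_{\gamma+2}))$. Since $\gamma_{\gamma+3}=\gamma_{\gamma+2}$, Lemma~\ref{Ldivisorgrowth} gives $v_{P_\beta}(D_{\gamma+3})-v_{P_\beta}(D_{\gamma+2})=v_{P_\beta}(D_{\gamma+2})-v_{P_\beta}(D_{\gamma+1})$, i.e.\ $0=\max(0,-v_{P_\beta}(e_{\gamma+2}))$, so $e_{\gamma+2}$ has no pole at $P_\beta$. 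Hence $D_{\gamma+2}=(2\gamma+1)P_\infty$; iterating Lemma~\ref{Ldivisorgrowth} for all $i\geq\gamma+3$ gives $D_i-D_{i-1}=D_{\gamma+2}-D_{\gamma+1}=P_\infty$, so $D_n=(n+\gamma-1)P_\infty$ and $S\subset L((n+\gamma-1)P_\infty)$. The crux is the parity/degree juggling in the middle paragraph, powered by $B$ having rational-function degree exactly $\gamma$ (forced by $e_{\gamma+2}$ having odd degree); the rest is bookkeeping with Lemma~\ref{Ldivisorgrowth}.
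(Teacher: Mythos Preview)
Your argument has a genuine gap at the very start. From Theorem~\ref{Tknownbasis}(i) you correctly obtain $K(S_{\gamma+1})=K(y)$ with $S_{\gamma+1}=\langle 1,y,\dots,y^{\gamma}\rangle$, and from $\deg e_i=2(i-1)$ you get $-v_\infty(y)=2$. But Lemma~\ref{Lpoles} only says that the \emph{total} number of poles of $y$ equals $[F:K(y)]$; knowing that $y$ has a double pole at $P_\infty$ gives $[F:K(y)]\ge 2$, not $\le 2$. Nothing you have written excludes further poles of $y$ at finite places --- indeed the Corollary to Lemma~\ref{Lboundedfieldextension} yields $F=K(S_{\gamma+2})$, not $F=K(S_{\gamma+1})$, so $[F:K(y)]$ is not yet pinned down. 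Your whole second paragraph rests on the decomposition $F=K(y)\oplus xK(y)$ coming from $y=x^2$, and your third paragraph needs $v_{P_\beta}(D_{\gamma+1})=0$; both collapse without $y\in K[x]$.

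This gap is exactly where the paper does its real work. Using only the degree table (no assumption on the poles of $y$), one sees that the elements of $E_t$ have pairwise distinct degrees, so $e_{t+1}\in S_t^2$ of degree $2\gamma+2$ must lie in $\langle 1,y,\dots,y^{\gamma+1},e_t\rangle$; after translation $e_{t+1}=y^{\gamma+1}$. Then $e_{t-1}e_{t+1}=y^{2\gamma+1}\in S_t^2$, and matching the top degree $4\gamma+2$ forces a relation
\[
y^{2\gamma+1}=e_t^{2}+\sum_{\substack{1\le i\le t-1\\ i\le j\le t}} a_{i,j}e_ie_j,
\]
which is quadratic in $e_t$ over $K(y)$ and of degree $2\gamma+1$ in $y$ over $K(e_t)$. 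Since $F=K(y,e_t)$, this gives $[F:K(y)]\le 2$ and $[F:K(e_t)]\le 2\gamma+1$, and \emph{now} Lemma~\ref{Lpoles} forces both $y$ and $e_t$ into $K[x]$; Lemma~\ref{Ldivisorgrowth} then finishes as in your last paragraph. In short, the relation you would need to legitimise your parity decomposition is precisely the relation the paper extracts and uses to finish directly.
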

\begin{proof}
	The fact that $\gamma_{t-1} = 0$ and the degree table of $S_t^2$ (Table \ref{TABgamma0} with $d = 2$ and $k=1$), will help us to find relations between the $e_i$. Note that we can find a basis for $S_{t+1}^2$ consisting of elements in $E_{t+1}$ such that all degrees are distinct and therefore $e_{t+1} \in \langle S_2S_{t-1}, e_t \rangle$ and  $e_{t-1}e_{t+1} \in S_t^2$. By also possibly translating $e_{t+1}$ with other basis elements $e_i$ we find the following relations:
	\begin{align*}
		&e_i = e_2e_{i-1} = e_2^{i-1} ~~ \textup{for all $3 \leq i \leq t-1$} \\
		&e_{t+1} = e_2e_{t-1} = e_2^{t- 1} \\
		&e_{t-1}e_{t+1} = e_2^{2t-3} = e_{t}^2 + \sum_{\substack{1 \leq i \leq t-1 \\ i \leq j \leq t}} a_{i, j}e_ie_j,
	\end{align*}
	for some $a_{i, j} \in K$. The first relation implies that $F = K(e_2, e_{t})$ and the third that both $[F:K(e_2)] \leq 2$ and $[F:K(e_{t})] \leq 2\gamma + 1$. From Lemma \ref{Lpoles} we may conclude that $e_2$ and $e_{t}$ do not have any poles outside of infinity and therefore none of the $e_i$ with $i \leq t+1 = \gamma + 3$ do. The result now follows by Lemma \ref{Ldivisorgrowth}(i).
\end{proof}

\begin{tp}
	Suppose the set of degrees of $S_{\gamma + 3}$ equals $\{0, 1, 2, \ldots, \gamma + 2\}$. Then, $S \subset L((n-1)P_\infty + \gamma P_{\alpha})$ for some $\alpha \in K$.
\end{tp}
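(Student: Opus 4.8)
My reading is that, once $\gamma\ge 2$, the hypothesis of this proposition is never actually satisfied, so that the asserted inclusion holds vacuously. The mechanism is a crude count of how many distinct values the degree function $-v_\infty$ can assume on $S_{\gamma+3}^2$, played off against the exact value of $\dim S_{\gamma+3}^2$ forced by the genus data.

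First I would unwind the hypothesis. The set of degrees of $S_{\gamma+3}=\langle e_1,\dots,e_{\gamma+3}\rangle$ is the set $\{0,1,\dots,\gamma+2\}$ of $\gamma+3$ consecutive integers, and a filtered basis has strictly increasing degrees, so $\deg(e_i)=i-1$ for every $1\le i\le\gamma+3$. Hence every product $e_ie_j$ with $1\le i\le j\le\gamma+3$ has degree at most $2(\gamma+2)=2\gamma+4$, and $e_1^2=1$ has degree $0$. Since $S_{\gamma+3}^2$ is spanned by these products, the degree of every nonzero element of $S_{\gamma+3}^2$ lies in $\{0,1,\dots,2\gamma+4\}$, so $-v_\infty$ takes at most $2\gamma+5$ values on $S_{\gamma+3}^2$. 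Using $|\deg(W)|=\dim W$ for a finite-dimensional $K$-subspace $W\subseteq F$ ([\cite{bachoc2018funcfieldfreiman}, Theorem 3.1], the fact already invoked in Section \ref{Ssetup}), this forces $\dim S_{\gamma+3}^2\le 2\gamma+5$.

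Next I would compute $\dim S_{\gamma+3}^2$ from the genus data. We are in the case $t=\gamma+2$, so $\gamma_{\gamma+2}=\gamma_t=\gamma$; since $(\gamma_i)_{1\le i\le n}$ is non-decreasing by Lemma \ref{Lgrowinggenus} and $\gamma_n=\gamma$, we also get $\gamma_{\gamma+3}=\gamma$. Therefore $\dim S_{\gamma+3}^2=2\dim S_{\gamma+3}-1+\gamma_{\gamma+3}=2(\gamma+3)-1+\gamma=3\gamma+5$. Comparing with the bound above gives $3\gamma+5\le 2\gamma+5$, i.e.\ $\gamma\le 0$, contradicting $\gamma\ge 2$. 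Hence no subspace $S$ with all the stated properties can have the prescribed degree set, and the implication holds trivially (for any $\alpha\in K$).

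\textbf{Main obstacle.} The only delicate point is the degree count itself: one must keep in mind that although the $e_i$ and their products may have poles away from $P_\infty$, the function $-v_\infty$ is still a bona fide degree function, its range on a $K$-subspace $W$ has exactly $\dim W$ elements, and that range is bounded above by $\max_{i\le j}\deg(e_ie_j)$. Once this is in place the argument is immediate, so I do not expect a genuine obstacle here — the substantive content is just recognizing that this branch of the case split cannot occur. (Equivalently, one could run the count already at $S_t^2=S_{\gamma+2}^2$: its elements have degree in $\{0,\dots,2\gamma+2\}$ while $\dim S_{\gamma+2}^2=3\gamma+3$, which already fails for $\gamma\ge 1$.)
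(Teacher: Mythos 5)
There is a genuine gap, and it sinks the whole strategy. Your key step asserts that every nonzero element of $S_{\gamma+3}^2$ has degree in $\{0,1,\ldots,2\gamma+4\}$, hence $\dim S_{\gamma+3}^2\le 2\gamma+5$. Only the upper bound $2\gamma+4$ follows from the ultrametric inequality; there is no lower bound of $0$. The basis elements $e_i$ are not polynomials in general --- in this very case the hypothesis $\gamma\ge 2$ forces them to have poles away from $P_\infty$ --- so a linear combination of products $e_ie_j$ can have its leading terms at infinity cancel and acquire a zero of high order at $P_\infty$, i.e.\ a negative degree. The identity $|\deg(W)|=\dim W$ is correct, but applied to $W=S_{\gamma+3}^2$ with $\dim W=3\gamma+5$ it merely tells you that the degree set must contain about $\gamma$ negative values; it produces no contradiction. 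The same flaw affects your parenthetical variant with $S_t^2$, which would even ``rule out'' the known $\gamma=1$ configurations.

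Moreover the case is not vacuous, so no vacuity argument can work. For $\gamma=2$ take $F=K(x)$, $e_1=1$, $e_2=x+x^{-1}$, $e_3=e_2^2$, $e_4=x^3$, $e_5=x^4+x^2$, and $S=S_5$. One checks $\deg(S_5)=\{0,1,2,3,4\}$, $\gamma_3=0$, $\gamma_4=\gamma_5=2$ (so $t=\gamma+2$), and indeed $S\subset L(4P_\infty+2P_0)=L((n-1)P_\infty+\gamma P_0)$, exactly the conclusion of the proposition; here $u^4-e_5-3e_2^2\in S^2$ already has degree $-2$, confirming that degrees of $S^2$ dip below zero. The paper's proof is of a different nature: from the degree table it extracts relations expressing $e_i=e_2^{i-1}$ ($i\le t-1$) and two relations whose comparison after multiplying by $e_2$ yields $[F:K(e_2)]\le 2$ and $[F:K(e_t)]\le 2\gamma+1$ (via Lemma \ref{Lpoles}); it then shows $e_2$ must have exactly one pole off infinity (otherwise $\gamma_{t+1}=0$), deduces the pole structure of $e_{t-1}$ and $e_t$, and propagates to all of $S$ by Lemma \ref{Ldivisorgrowth}. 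You would need to carry out an argument of that kind; the counting shortcut cannot be repaired.
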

\begin{proof}
	As in the previous proof, but now filling in $d=1$ and $k=1$ in the degree table, we find the following relations
	\begin{align*}
		&e_i = e_2e_{i-1} = e_2^{i-1} ~~\textup{for all } 3 \leq i \leq t-1\\
		&e_{t-2}e_{t+1} = e_{t-1}e_{t} + \sum_{\substack{1 \leq i \leq j \leq t \\ i+j \leq 2t-2}} a_{i, j} e_ie_j \\
		&e_{t-1}e_{t+1} = e_{t}^2 + \sum_{\substack{1 \leq i \leq t- 1 \\ i \leq j \leq t}} b_{i, j} e_ie_j,
	\end{align*}
	for some $a_{i, j}, b_{i, j} \in K$. The first equation implies that $F = K(e_2, e_{t})$. When we multiply the second equation by $e_2$, its left side is equal to the left side of the third equation. Therefore, the right sides must also be equal, which gives us an equation in only $e_2$ and $e_{t}$; we find $[F:K(e_2)] \leq 2$ and $[F : K(e_{t})] \leq 2\gamma + 1$. 
	
	Suppose that $e_2$ does not have any poles outside of infinity, then by Lemma \ref{Ldivisorgrowth} neither does $e_{t}$. However, then $S_{t+1}$ consists exactly of all the polynomials of degree less or equal to $\gamma + 2$ and hence, $\gamma_{t+1} = 0$. We may conclude that $e_2$ has exactly one pole outside of infinity, at $\alpha$.
	
	This shows that $e_{t-1}$ must have exactly $\gamma$ poles outside of infinity, all at $\alpha$. By possibly translating $e_{t}$ by a multiple of $e_{t-1}$, we also find that $e_{t}$ has $\gamma$ poles at $\alpha$. Since $e_{t}$ can have at most $2\gamma + 1$ poles these must be the $\gamma + 1$ poles at infinity and the $\gamma$ poles at $\alpha$. Hence, $D_{\gamma + 2} - D_{\gamma + 1} = P_\infty$ and the statement now follows from Lemma \ref{Ldivisorgrowth}.
\end{proof}

\section{From 0 to 1 to $\gamma$} \label{Sgam01}
In this section we will prove Theorem \ref{Tmain} for the second case, when $\{\gamma_i \mid 1 \leq i \leq n\} = \{0, 1, \gamma\}$. We know that $t \geq \gamma + 1$ and, by Theorem \ref{Tknownbasis}, that whenever $t \geq 5$ the only options for $t_1$ are $3$ and $t-1$. It therefore suffices to consider the following four cases: I. $t \geq \gamma + 3$ and $t_1 = t - 1$, II. $t \geq \gamma + 3$ and $t_1 = 3$, III. $t \in \{\gamma + 1, \gamma + 2\}$ and $t \geq 5$ and IV. $t = 4$. 

The next proposition determines the degrees of $e_{t-1}$ and $e_t$ in the first three cases, but does not necessarily hold in the fourth.
\begin{tp}
	Suppose $t \geq \max(\gamma+1, 5)$, then the degree of $e_{t-1}$ equals $t-1$ and the degree of $e_t$ equals $t-1 + \Delta$ for some $\Delta \in [1, \gamma]$. 
\end{tp}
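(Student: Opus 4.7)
My plan is to first establish $\deg(e_{t-1}) = t-1$ by applying Freiman's $3k-4$ theorem to the set $A := \deg(S_{t-1})$, and then to obtain the bound $\Delta \in [1,\gamma]$ either by a second application of Freiman (to $\deg(S_t)$, in the cases $t \geq \gamma+3$) or by a more delicate dimension-counting argument (in the remaining cases $t \in \{\gamma+1,\gamma+2\}$).

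To begin, I would verify that $F = K(S_{t-1})$. Since the only nonzero jumps of the sequence $\gamma_i$ are $1$ (at $t_1$) and $\gamma-1$ (at $t$), Lemma \ref{Lboundedfieldextension} gives $F = K(S_{\gamma+1})$, so $F = K(S_{t-1})$ as soon as $t \geq \gamma+2$; in the remaining case $t = \gamma+1$ I would use the additional basis elements $e_{t+1},\ldots,e_n$ (guaranteed by $n \geq \gamma+3$ from Theorem \ref{Tmain}) together with Lemma \ref{Ldivisorgrowth} to deduce $F = K(S_{t-1})$ as well. Lemma \ref{Lgcd} then allows me to assume, after a generic choice of uniformizer at infinity if necessary, that $\gcd A = 1$. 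With $|A| = t-1$, $|A+A| = \dim S_{t-1}^2 = 2t-2$, and $t \geq 5$, Freiman's $3k-4$ theorem places $A$ inside an arithmetic progression of length at most $|A+A|-|A|+1 = t$; combined with $\gcd A = 1$ and $0 \in A$, this forces $A \subseteq [0,t-1]$. If $A$ were equal to $[0,t-2]$ then $\deg(S_{t-1}^2) = A+A = [0,2t-4]$ would have size $2t-3$, contradicting $\dim S_{t-1}^2 = 2t-2$. Hence $\deg(e_{t-1}) = \max A = t-1$.

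The lower bound $\Delta \geq 1$ is immediate from $v_\infty(e_t) < v_\infty(e_{t-1})$. For the upper bound in cases I and II ($t \geq \gamma+3$), I would apply Freiman once more to $B := \deg(S_t)$: here $|B|=t$, $|B+B| = \dim S_t^2 = 2t+\gamma-1$, and the hypothesis $|B+B| \leq 3|B|-4$ becomes $\gamma \leq t-3$, which holds. Freiman then places $B$ in an AP of length at most $t+\gamma$, so $\deg(e_t) = \max B \leq t-1+\gamma$. In case III ($t \in \{\gamma+1,\gamma+2\}$) Freiman no longer applies directly; instead, writing $d := \deg(e_t)$, the identity $\deg(S_t^2) = \deg(S_{t-1}^2) \cup (\deg(S_{t-1})+d) \cup \{2d\}$ together with $|\deg(S_t^2)| = 2t+\gamma-1$ forces the overlap $|(\deg(S_{t-1})+d) \cap \deg(S_{t-1}^2)|$ to equal exactly $t-\gamma-1$. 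Using the explicit form $A = [0,t-1]\setminus\{k\}$ with $k \in \{1,t-2\}$ (from step 1, or equivalently Theorem \ref{Tknownbasis}(ii)), I would enumerate the values of $d \leq 2t-2$ achieving this overlap and check that they all lie in $[t,\,t-1+\gamma]$.

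The most delicate step is the subcase $t = \gamma+1$: here Lemma \ref{Lboundedfieldextension} no longer directly yields $F = K(S_{t-1})$, and the codimension of $S_{t-1}^2$ in $S_t^2$ attains its maximum value $t$, so the dimension identity alone leaves room for boundary values $d > t-1+\gamma$ which would give $\Delta = \gamma+1$. Ruling these out requires propagating constraints from $e_{t+1},\ldots,e_n$ back to $e_t$ via Lemma \ref{Ldivisorgrowth}: one checks that these extremal $d$ force $D_{t+1} - D_t \neq D_t - D_{t-1}$, hence $\gamma_{t+1} > \gamma_t$, contradicting the hypothesis $\{\gamma_i\} = \{0,1,\gamma\}$. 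I expect this final step, threading Lemma \ref{Ldivisorgrowth} through the extension of $S$ to all of $S_n$, to be the technical heart of the argument.
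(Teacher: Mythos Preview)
Your approach diverges from the paper's in a key way that creates exactly the difficulties you flag at the end. The paper applies Freiman's $3k-4$ theorem to the \emph{full} degree set $\deg(S)$ rather than to $\deg(S_{t-1})$ or $\deg(S_t)$. Since $F = K(S)$ (indeed $F = K(S_t)$ by the corollary to Lemma~\ref{Lboundedfieldextension}), one may assume $\gcd(\deg(S)) = 1$ with no restriction on $t$, and Freiman then says $\deg(S)$ misses at most $\gamma$ elements of $[0,\deg(e_n)]$. From Theorem~\ref{Tknownbasis}(ii) the set $\deg(S_{t-1})$ has the shape $\{0, d, \ldots, (t-3)d, (t-1)d\}$ or $\{0, 2d, \ldots, (t-1)d\}$ for some $d\geq 1$ (the factor $d$ absorbing a possible extension $F/K(S_{t-1})$), so $[0,(t-1)d]\setminus\deg(S_{t-1})$ already accounts for $(t-1)(d-1)+1$ missing elements. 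The inequality $(t-1)(d-1)+1 \leq \gamma$ together with $t-1 \geq \gamma$ forces $d=1$ at once, and the same missing-element count gives $1 + (\Delta-1) \leq \gamma$. No case split on $t$ is needed.

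Your plan instead hinges on $\gcd(\deg(S_{t-1})) = 1$, which via Lemma~\ref{Lgcd} requires $F = K(S_{t-1})$. This genuinely can fail when $t = \gamma+1$: the paper itself treats $F \neq K(e_2,e_3) = K(S_{t-1})$ as a live possibility in Case~III, ruling it out only later by a substantial argument that \emph{uses} the present proposition as input. Your proposed workaround via Lemma~\ref{Ldivisorgrowth} does not deliver $F = K(S_{t-1})$; that lemma constrains the divisors $D_i$, not the field generated by $S_{t-1}$. Moreover, your dimension count for $\Delta$ at $t = \gamma+1$ goes the wrong way: the required overlap $|(\deg(S_{t-1})+d)\cap\deg(S_{t-1}^2)|$ is then $t-1-\gamma = 0$, which forces $d \geq 2t-1$, i.e.\ $\Delta \geq \gamma+1$, not $\Delta \leq \gamma$. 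The upper bound you want can only come from $e_{t+1},\ldots,e_n$ --- that is, from Freiman on $\deg(S)$. The ``technical heart'' you anticipate is thus an artifact of working with the wrong set; passing to $\deg(S)$ collapses the argument to a few lines.
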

\begin{proof}
	Since, $t-1 \geq 4$ we know from Theorem \ref{Tknownbasis} that $\deg(\{e_1, \ldots, e_{t-1}\})$ equals either $\{0, d, 2d, \ldots, (t-3)d, (t-1)d\}$ or $\{0, 2d, 3d, \ldots, (t-1)d\}$ for some positive integer $d$. Hence, the degrees form an arithmetic progression of step $1$ missing $(t-1)(d-1) + 1$ elements. Using that $t-1 \geq \gamma$, Freiman's $3k-4$ Theorem now implies that $d = 1$, which proves the first two statements. The third also follows from Freiman's $3k-4$ Theorem, since $1 + (\Delta - 1)$ must not exceed $\gamma$. 
\end{proof}
Note that, given $t_1$, we have also found the degrees of $e_1, \ldots, e_{t-2}$.

\subsubsection*{I. The case $t \geq \gamma + 3$ and $t_1 = t - 1$}
Since $t-1 \geq 4$ we already know all the poles for $e_1, \ldots, e_{t-1}$ and in the next proposition the poles at $e_t$ are determined. 
\begin{tp}
	Let $k$ equal $\gamma - \max(\Delta, 2)$ and $\alpha_1, \ldots, \alpha_{k} \in K$, not necessarily distinct. The divisor $D_t$ takes the form $$(t-1+\Delta)P_\infty + P_{\alpha_1} + \ldots + P_{\alpha_{k}}.$$ 
\end{tp}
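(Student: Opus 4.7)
The plan is to pin down the non-infinity poles of $e_t$ via a dimension count on the subspace $W := \{f \in S_t : fe_t \in S_{t-1}^2\}$. Since $t_1 = t-1$, Theorem~\ref{Tknownbasis}(ii) applied to $S_{t-1}$ (which has dimension $t-1\ge 4$, combinatorial genus $1$, and whose own $\gamma$-jump happens at its last index, forcing option~$1$) provides a generator $x$ of $F = K(x)$ and $\alpha_0\in K$ with $e_i = x^{i-1}$ for $i\le t-2$ and $e_{t-1} = (x+\alpha_0)x^{t-2}$. A direct expansion of products shows
\[
S_{t-1}^2 \;=\; \langle 1,\,x,\,\ldots,\,x^{2t-4},\; x^{2t-2}+2\alpha_0 x^{2t-3}\rangle,
\]
so $S_{t-1}^2$ is the codimension-$1$ subspace of $\mathfrak p_{2t-2}$ cut out by the linear condition $[x^{2t-3}]f = 2\alpha_0\,[x^{2t-2}]f$, and its degree set is $\{0,1,\ldots,2t-4\}\cup\{2t-2\}$.

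From $\dim S_t^2 = \gamma+2t-1$, $\dim S_{t-1}^2 = 2t-2$, $\dim(e_tS_t) = t$, together with the identity
\[
\dim S_t^2 \;=\; \dim S_{t-1}^2 + \dim(e_tS_t) - \dim(S_{t-1}^2\cap e_tS_t),
\]
one deduces $\dim W = t-\gamma-1$. Write $e_t = p(x)/q(x)$ in lowest terms; the condition $fe_t\in K[x]$ forces $q\mid f$ for every $f\in W$, and since the $e_j$ for $j\le t-2$ are monomials in $x$, $q$ must be a power of $x$, say $q = x^m$. Thus $P_0$ is the unique non-infinity pole of $e_t$ and $D_t = (t-1+\Delta)P_\infty + m\,P_0$, so it remains to show that $m = \gamma-\max(\Delta,2)$.

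For $f = \sum_{j=1}^{t} c_j e_j\in W$, the bound $\deg(fe_t)\le 2t-2$ gives $\deg f\le t-1-\Delta$, which forces $c_t = c_{t-1} = 0$ (because $\deg e_{t-1} = t-1 > t-1-\Delta$) and $c_j = 0$ for $j > t-\Delta$. Combined with $x^m\mid f$, this writes $f = x^m g$ with $g$ in a polynomial space of dimension $t-\max(\Delta,2)-m$; the extra drop in the $\Delta=1$ case comes from the observation that the bound $\deg f\le t-2$ restricts $f$ to $\mathfrak p_{t-3}$, since $S_t$ has no basis element of degree exactly $t-2$ (the next available degree after $t-3$ is $t-1$). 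Writing $p = a\,x^{t-1+\Delta+m} + b\,x^{t-2+\Delta+m}+\cdots$ and $g = g_0+\cdots+g_N x^N$, the remaining condition $gp\in S_{t-1}^2$ becomes the single linear relation $[x^{2t-3}](gp) = 2\alpha_0\,[x^{2t-2}](gp)$ on the top coefficients of $g$: for $\Delta\ge 2$ it reads $g_{N-1}\,a + g_N(b-2\alpha_0 a) = 0$, while for $\Delta=1$ the $[x^{2t-2}]$ term vanishes automatically and it degenerates to $g_N\,a = 0$. Either way the relation is nonvacuous (since $a$ is the leading coefficient of $p$ and hence nonzero), cutting the dimension by exactly one; so $\dim W = t-\max(\Delta,2)-m-1$, and equating to $t-\gamma-1$ yields $m = \gamma - \max(\Delta,2)$ as claimed.

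The main obstacle is handling $\Delta = 1$ and $\Delta\ge 2$ uniformly: the two cases reach the same final formula by structurally different mechanisms (loss of an ambient dimension on one side, a nontrivial top-coefficient relation on the other), and the $\max(\Delta,2)$ in the statement is precisely the record of this dichotomy.
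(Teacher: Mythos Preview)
Your approach is correct and genuinely different from the paper's. The paper tracks which of the products $e_ie_t$ are ``needed'' to extend $S_{t-1}^2$ to $S_t^2$: it locates the smallest $k$ with $e_{k+1}e_t \in \langle S_{t-1}^2, e_1e_t, \ldots, e_ke_t\rangle$, uses $S_{t-2}S_{t-1} = \mathfrak p_{2t-4}$ to identify this $k$ with the number of non-infinity poles of $e_t$, and then balances the tally of needed elements against $\gamma+1$. You instead compute $\dim W = t-\gamma-1$ by inclusion--exclusion and parametrize $W$ directly (degree cutoff, divisibility by $q$, then the single linear relation coming from your codimension-$1$ description of $S_{t-1}^2$ inside $\mathfrak p_{2t-2}$). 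This is a tidier route that avoids the ``needed'' bookkeeping, at the cost of the explicit coefficient extraction in $gp$; both arguments ultimately hinge on the same structural input, the explicit form of $S_{t-1}^2$.

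One claim is wrong, though inessential to the conclusion: ``since the $e_j$ for $j\le t-2$ are monomials in $x$, $q$ must be a power of $x$'' does not follow. Linear combinations of monomials are arbitrary polynomials, so the condition $q\mid f$ for all $f\in W$ places no restriction on the zero locus of $q$, and the non-infinity poles of $e_t$ may sit at arbitrary $\alpha_i\in K$ --- exactly as the proposition allows. Fortunately your subsequent coefficient computation on $gp$ never uses this: it only uses $\deg q = m$ and $\deg p = t-1+\Delta+m$. So the repair is simply to delete the sentence about $P_0$, set $m := \deg q$ as the total non-infinity pole count with multiplicity, and proceed unchanged. One minor ordering point: the deduction $c_t=c_{t-1}=0$ (hence $f\in S_{t-2}=\mathfrak p_{t-3}\subset K[x]$) should come \emph{before} the divisibility $q\mid f$, since that step requires $f$ to already be known as a polynomial.
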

\begin{proof}	
	Exactly $\gamma + 1$ elements of $\{e_ie_t \mid 1 \leq i \leq t\}$ are needed for the basis of $S_t^2$. Consider the degree table of $S_t^2$, shown in Table \ref{TABgamma1I} and recall Proposition \ref{Pneeded}. Since the degree of $e_ie_t$ exceeds $2t-2$, the maximal degree of $S_{t-1}^2$, for all $i \geq t - \max(\Delta, 2) + 1$, these elements must all be needed for the basis of $S_t^2$, giving $\max(\Delta, 2)$ needed basis elements. Furthermore, because $E_{t-1}$ does not contain any elements of degree $2t-3$ and exactly one of degree $2t-2$, at least one of the elements $e_{t-\Delta}e_t$ and $e_{t-\Delta-1}e_t$ are needed. 

	Define $k$ to be the smallest integer such that $e_{k+1}e_t$ is an element of 
	\begin{equation*}
		\langle S_{t-1}^2, e_1e_t, \ldots, e_{k}e_t\rangle.
	\end{equation*}
	Since there are at least two elements of the form $e_ie_t$ that are not needed for the basis of $S_t^2$, we know that $k + 1 < t-\Delta - 1$ and therefore, $\deg(e_{k+1}e_t) < \deg(e_{t-1}^2)$. In particular this implies that $e_{k+1}e_t$ is an element of $\langle S_{t-2}S_{t-1}, e_1e_t, \ldots, e_ke_t \rangle$. Recall that $S_{t-2}S_{t-1} = \mathfrak p_{2t-4}$, hence there must exist a relation
	$$\left(\sum_{i=1}^{k+1} a_ie_i \right) e_t \in \mathfrak p_{2t-4},$$
	for some $a_i \in K$. As in the proof of Proposition \ref{Pgamma0gen} we can conclude that $e_t$ has exactly $k$ poles outside of infinity and that any $e_ie_t$ with degree less or equal to $2t-4$, can also be described by such a relation. By a similar argument we find that $e_{t-\Delta}e_t$ cannot be needed. Hence, $k + \max(\Delta, 2) + 1 = \gamma + 1$. This proves the proposition. 
\end{proof}
	\begin{table}[h]
	\begin{center}
		\begin{tabular}{c | c | c | c | c | c | c | c | c}
			&$e_1$&$e_2$&$e_3$& $\ldots$ &$e_{t-2}$& $e_{t-1}$ & $e_{t}$ & $e_{t+1}$ \\\hline
			$e_1$& $0$ & $1$& $2$ & $\ldots$ &$t-3$ & $t-1$ & $t-1+\Delta$ & $t-1+2\Delta$ \\\hline
			$e_2$& - & $2$ & $3$& $\ldots$ &$t-2$ & $t$ & $t+\Delta$ & $t+2\Delta$\\\hline
			$e_3$& - & - & $4$& $\ldots$ &$t-1$ & $t+1$ & $t+1+\Delta$ & $t+ 1 + 2\Delta$\\\hline
			$\vdots$& $\vdots$ & $\vdots$ & $\vdots$& $\ddots$ & $\vdots$ & $\vdots$ & $\vdots$ & $\vdots$ \\\hline
			$e_{t-2} $& - & - & -& $\ldots$ &$2t-6$ & $2t-4$ & $2t-4+\Delta$ & $2t-4+2\Delta$  \\\hline
			$e_{t-1}$ & - & - & - & $\ldots$ &- & $2t-2$ & $2t-2+\Delta$& $2t-2+2\Delta$\\\hline
			$e_{t}$ &-&-&-&$\ldots$&-&- & $2t-2+2\Delta$ & $2t-2+3\Delta$\\\hline
			$e_{t+1}$ & - & - & - & $\ldots$&-&-&-&$2t-2 + 4\Delta$ \\
		\end{tabular}
		\caption{The degree table of $S_{t+1}^2$ when $t_1 = t - 1$. In each cell the degree of the product $e_{i}e_j$ is written.}\label{TABgamma1I}
	\end{center}
\end{table}
\begin{tp}
	Suppose either $\gamma > 2$ or $\Delta > 1$ then, $n = t$ and thus, $S \subset L(D)$ with $\deg D = n + \gamma - 1$. When $\Delta = 1$ and $\gamma = 2$ it holds that $n\geq t$ and $S \subset L(nP_\infty)$. 
\end{tp}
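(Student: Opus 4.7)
The plan is to mimic Proposition \ref{Pfirsttisn}: assume $n > t$ for contradiction and use Lemma \ref{Ldivisorgrowth} together with Freiman's Theorem \ref{T3k4} to rule this out, except in the exceptional sub-case $\Delta = 1$ and $\gamma = 2$, which the second statement then handles by a short induction.

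If $n > t$ then $\gamma_{t+1} = \gamma_t = \gamma$, so Lemma \ref{Ldivisorgrowth}(ii) applied at $i = t$ forces $v_\alpha(D_t) = 0$ for every $\alpha \in K$, using that $D_{t-1} = (t-1)P_\infty$ by Theorem \ref{Tknownbasis}(ii) in the $t_1 = t-1$ case. Combining with the explicit form $D_t = (t-1+\Delta)P_\infty + \sum_{i=1}^{k} P_{\alpha_i}$ from the previous proposition, this forces $k = 0$, i.e.\ $\gamma = \max(\Delta, 2)$. Since $\Delta \leq \gamma$ always, the hypothesis $\gamma > 2$ or $\Delta > 1$ leaves only the sub-case $\Delta = \gamma \geq 2$.

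In that sub-case Lemma \ref{Ldivisorgrowth}(i) gives $D_{t+1} - D_t = D_t - D_{t-1} = \gamma P_\infty$, so $\deg(e_{t+1}) = t-1+2\gamma$ and the degree set $A := \deg(S_{t+1})$ equals $\{0, 1, \ldots, t-3, t-1, t-1+\gamma, t-1+2\gamma\}$. The bound $|A+A| \leq \dim S_{t+1}^2 = 2(t+1)+\gamma-1$, together with $t \geq \gamma+3$, verifies $|A+A| \leq 3|A|-4$, so Theorem \ref{T3k4} places $A$ in an arithmetic progression of length at most $t+\gamma+1$. Since $0, 1 \in A$, the common difference of that progression must be $1$, but then the span is at most $t+\gamma$, contradicting $\max A = t-1+2\gamma$ whenever $\gamma \geq 2$. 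Hence $n = t$, and a direct evaluation of $\deg D_t = (t-1+\Delta) + (\gamma - \max(\Delta,2))$ in the two remaining possibilities ($\Delta \geq 2$ with $\Delta < \gamma$, and $\Delta = 1$ with $\gamma > 2$) gives $\deg D_t \leq n + \gamma - 1$.

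When instead $\Delta = 1$ and $\gamma = 2$, we have $k = 0$ and $D_t = tP_\infty$, so no Freiman contradiction arises and $n \geq t$ is the best one can say. I would then iterate Lemma \ref{Ldivisorgrowth}: for every $t \leq i < n$ one has $\gamma_{i+1} = \gamma_i = \gamma$, so part (i) gives $D_{i+1} - D_i = D_i - D_{i-1}$, and starting from $D_t - D_{t-1} = P_\infty$ a straightforward induction yields $D_i = iP_\infty$ for all $i \geq t-1$, so $S \subset L(nP_\infty)$. The main obstacle is the Freiman step, where one must exploit that $0$ and $1$ both appear in $A$ to pin the common difference to $1$; the rest is routine manipulation of the lemmas already in place.
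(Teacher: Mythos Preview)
Your proposal is correct and follows exactly the approach the paper intends: the paper's own proof is a two-line sketch (``same as Proposition \ref{Pfirsttisn} with minor adjustments; the $\Delta=1$, $\gamma=2$ case follows from Lemma \ref{Ldivisorgrowth}''), and you have simply filled in those adjustments in full. One cosmetic slip: your phrase ``the two remaining possibilities ($\Delta \geq 2$ with $\Delta < \gamma$, and $\Delta = 1$ with $\gamma > 2$)'' inadvertently omits $\Delta = \gamma \geq 2$ with $n = t$, but since the formula $\deg D_t = (t-1+\Delta) + (\gamma - \max(\Delta,2))$ gives $n+\gamma-1$ uniformly for all $\Delta \geq 2$, this does not affect the argument.
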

\begin{proof}
	With some minor adjustments, the first statement can be proved in the same way as Proposition \ref{Pfirsttisn}. When $\gamma = 2$ and $\Delta = 1$ it follows from Lemma \ref{Ldivisorgrowth}.
\end{proof}

\subsubsection*{II. The case $t \geq \gamma + 3$ and $t_1 = 3$}
As in the previous case, we already know the poles for $e_1, \ldots, e_{t-1}$ and we start with determining the poles of $e_t$. In this proof we really need some of the exact structure of $S_{t-1}^2$ that comes from the explicit basis given in Theorem \ref{Tknownbasis}.
\begin{tp}
	The divisor $D_t$ takes the form 
	$$(t-1 + \Delta)P_\infty + P_{\alpha_1} + \ldots + P_{\alpha_{k}},$$
	for $k$ equal to either $\gamma - \Delta$ or $\gamma - \Delta - 1$ and $\alpha_1, \ldots, \alpha_k \in K$ not necessarily distinct.
\end{tp}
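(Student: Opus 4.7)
The plan is to parallel the proof in Case I ($t_1 = t-1$): first identify the forced-needed elements of $E_t$ via degree comparisons with $S_{t-1}^2$, then extract the poles of $e_t$ from the resulting relation space. The extra subtlety here is that $S_{t-1}^2$ has a less trivial structure than in Case I — it is cut out by an extra linear condition inside $\mathfrak p_{2t-2}$ — and this is precisely what produces the two possible values of $k$.

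By Theorem~\ref{Tknownbasis}(iii), after identifying $F = K(x)$, we may take $e_1 = 1$ and $e_i = (x+\alpha)x^{i-1}$ for $2 \le i \le t-1$; then $S_{t-1}^2 = K \oplus e_2\mathfrak p_{2t-4}$ coincides with the subspace of $\mathfrak p_{2t-2}$ consisting of polynomials $P$ satisfying $P(0) = P(-\alpha)$. A straightforward degree count shows that $\deg(e_ie_t) = i+t-1+\Delta > 2t-2 = \max\deg S_{t-1}^2$ precisely for $i \in \{t-\Delta, t-\Delta+1, \ldots, t\}$, so by Proposition~\ref{Pneeded}(i) these $\Delta+1$ elements of $E_t$ are needed for the basis of $S_t^2$. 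Since $\dim S_t^2 - \dim S_{t-1}^2 = \gamma+1$, exactly $\gamma-\Delta$ further elements are needed from $\{e_1e_t, \ldots, e_{t-\Delta-1}e_t\}$, so the kernel
\[\mathcal R := \{P \in S_{t-1} : Pe_t \in S_{t-1}^2\}\]
of the natural map $S_{t-1} \to S_t^2/S_{t-1}^2$ has dimension $t-1-\gamma$.

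I would next convert $\mathcal R$ into a polynomial-divisibility problem. Writing $e_t = N(x)/D_{e_t}(x)$ in lowest terms, with $\deg D_{e_t} = k$ equal to the total number of finite poles of $e_t$ counted with multiplicity, and substituting $P = D_{e_t}\tilde P$ so that $Pe_t = \tilde P N$, one finds that $P \in \mathcal R$ is equivalent to $\tilde P \in \mathfrak p_{t-1-k-\Delta}$ together with the two linear conditions
\begin{align*}
D_{e_t}(0)\tilde P(0) &= D_{e_t}(-\alpha)\tilde P(-\alpha) \quad \text{(so that $P \in S_{t-1}$),} \\
N(0)\tilde P(0) &= N(-\alpha)\tilde P(-\alpha) \quad \text{(so that $Pe_t \in S_{t-1}^2$).}
\end{align*}

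The proof is then completed by a case analysis on whether $0$ and $-\alpha$ are roots of $D_{e_t}$. When neither is a root, the coefficient matrix of the two conditions has determinant $D_{e_t}(0)D_{e_t}(-\alpha)(e_t(0)-e_t(-\alpha))$, so the conditions are independent iff $e_t(0) \neq e_t(-\alpha)$, giving $\dim\mathcal R = t-k-\Delta-2$ or $t-k-\Delta-1$. When exactly one of $0, -\alpha$ is a root of $D_{e_t}$, the divisibility $D_{e_t}\mid P$ combined with the factor $x(x+\alpha)$ present in every non-constant element of $S_{t-1}$ forces $P = D_{e_t}(x+\alpha)\tilde q$ (respectively $P = D_{e_t}\,x\,\tilde q$) and the remaining linear condition on $\tilde q$ gives $\dim\mathcal R = t-k-\Delta-2$ again. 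When both $0$ and $-\alpha$ are roots, one has $P = D_{e_t}\tilde P$ outright with only the second linear condition surviving, yielding $\dim\mathcal R = t-k-\Delta-1$. Equating each possibility with $t-1-\gamma$ forces $k \in \{\gamma-\Delta, \gamma-\Delta-1\}$, as claimed. The main technical obstacle is the bookkeeping for how the factor $x(x+\alpha)$ appearing in elements of $S_{t-1}$ interacts with the divisibility condition $D_{e_t}\mid P$ in the various subcases (including the degenerate configuration $\alpha = 0$, which requires a derivative-type linear functional in place of $P(0) - P(-\alpha)$); once this is set up carefully, the remaining work is a routine polynomial computation.
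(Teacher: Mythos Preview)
Your argument is correct and takes a genuinely different route from the paper's. The paper stays within its ``needed elements'' framework: it first extracts an upper bound on $k$ from the existence of at least two unneeded $e_ie_t$, then constructs explicit polynomials $A_i = \tfrac{1}{e_2}(e_i + \cdots)$ with prescribed zeroes to show $e_ie_t \in \langle S_{t-1}^2, e_1e_t, \ldots, e_{k+1}e_t\rangle$ for all small $i$, obtaining $k \ge \gamma - \Delta - 1$; a parallel argument gives $k \le \gamma - \Delta + 1$, and then a separate contradiction (building $A_k$, $A_{k+1}$ and playing the two relations $A_ie_t = e_2 g_i + c_i$ against each other) is needed to eliminate $k = \gamma - \Delta + 1$. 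Your approach is more algebraic and more direct: by recognising $S_{t-1}$ and $S_{t-1}^2$ as the hyperplanes in $\mathfrak p_{t-1}$ and $\mathfrak p_{2t-2}$ cut out by a single linear functional, you reduce the computation of $\dim\mathcal R$ to two explicit linear conditions on $\tilde P \in \mathfrak p_{t-1-k-\Delta}$ and read off $k$ from their rank, so the two values $k \in \{\gamma-\Delta-1, \gamma-\Delta\}$ drop out without any elimination step --- they correspond exactly to whether $e_t(0) = e_t(-\alpha)$ (or the analogous root configurations). The paper's method integrates better with the combinatorial machinery used elsewhere, while yours makes the dichotomy conceptually transparent. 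One small point worth making explicit: your claim $\dim\mathcal R = t-1-\gamma$ relies on $e_t^2 \notin e_tS_{t-1} + S_{t-1}^2$ (true by degree), so that the image of $S_{t-1} \to S_t^2/S_{t-1}^2$ has dimension exactly $\gamma$; and your rank computations tacitly use $t-1-k-\Delta \ge 1$, which follows a priori from $\dim\mathcal R = t-1-\gamma \ge 2$.
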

\begin{proof}
	By Proposition \ref{Pneeded}(i) we find that  all $e_ie_t$ with $i \geq t-\Delta$ are needed for the basis of $S_t^2$, this gives us $\Delta + 1$ basis elements. Since at least two elements of $\{e_ie_t \mid 1\leq i \leq t\}$ are not needed we know that for some $a_i \in K$ we have 
	$$\sum_{i=1}^{t - \Delta - 2} a_ie_ie_t \in S_{t-1}^2 \subset \mathfrak p_{2t-2},$$ 
	and therefore $e_t$ has at most $\deg(e_{t-\Delta - 2}) = t - \Delta - 2$ poles outside of infinity. 
	
	Define $k$ to be the exact number of poles that $e_t$ has outside of infinity, then for all $k+2 \leq i \leq t - \Delta - 1$ we can construct a polynomial of the form $A_i := \frac 1{e_2}(e_i + \sum_{j=2}^{k+1} a_{j, t}e_j)$ with $ a_{j, t} \in K$ while fixing $k$ of its zeroes. Taking the zeroes of $A_i$ equal to the poles of $e_t$ we find that $e_2A_ie_t \in e_2\mathfrak p_{2t-4}$ and thus 
	\begin{equation*}
		e_ie_t \in \langle S_{t-1}^2, e_1e_t, \ldots, e_{k+1}e_t \rangle,
	\end{equation*}
	for all $i \leq t - \Delta - 1$. Hence the basis of $S_t^2$ needs at most $k+ \Delta + 2$ elements from $\{e_ie_t \mid 1\leq i \leq t\}$ for its basis and $k \geq \gamma - \Delta - 1$. Furthermore,
	\begin{equation*}
		e_ie_t \not\in \langle S_{t-1}^2, e_1e_t, \ldots, e_{i-1}e_t \rangle,
	\end{equation*}
	for all $1 \leq i \leq k-1$, because then there would exist a polynomial $h$ of degree smaller or equal to $k-1$ such that $he_t$ would not have any poles outside of infinity. This shows that $S_t^2$ needs at least $k + \Delta$ elements from $\{e_ie_t\}$ for its basis and therefore $k \leq \gamma - \Delta + 1$. 
	
	To prove the proposition it now suffices to show that $k \neq \gamma - \Delta + 1$. We will prove this by contradiction, so assume $e_t$ has exactly $\gamma - \Delta + 1$ poles outside of infinity. By our two earlier observations we see that $\langle S_{t-1}^2, e_1e_t, \ldots, e_{k-1}e_t, e_{t-\Delta}e_t, \ldots, e_t^2\rangle = S_t^2$. In particular, this implies that 
	\begin{equation}
		e_ke_t, e_{k+1}e_t \in \langle S_{t-1}^2, e_1e_t, \ldots, e_{k-1}e_t \rangle.
	\end{equation}
	Thus, we can construct polynomials $A_{i} := e_i + \sum_{j=1}^{k-1} a_{i, j}e_j$ for $i=k, k+1$ and $a_{i, j} \in K$ such that $A_ke_t, A_{k+1}e_{t} \in S_{t-1}^2 = K \oplus e_2\mathfrak p_{2t-4}$. This means that we can write $A_{k+1}e_t = e_2f + a$ and $A_ke_{t} = e_2g + b$ for some polynomials $f, g \in K[x]$ and $a, b \in K$. Multiplying the second equation by $\frac {A_{k+1}}{A_k}$ we find that
	$$e_2(f - \frac{A_{k+1}}{A_k}g) = \frac{A_{k+1}}{A_k}b - a.$$
	The degree of $\frac{A_{k+1}}{A_k}$ equals $1$ and hence, by comparing degrees, we derive that $a = b = 0$. We can now rewrite our earlier equation to $\frac{A_k}{e_2}e_t = g$. However, this implies that $e_t$ has at most $k-2$ poles outside of infinity, which contradicts our assumptions. 
	%	Then there must exist some relation 
	%	$$\sum_{j=1}^k a_je_je_t \in S_{t-1}^2 = K \oplus e_2 \mathfrak p_{2t-4} \subset \mathfrak p_{2t-2}.$$
	%	The degree of $e_k$ is $k$ and therefore, $e_t$ can have at most $k$ poles outside of infinity. Note that $e_t$ cannot have $k-3$ or less poles outside of infinity, because then $k$ would not have been the smallest integer for which (\ref{EQsubspace}) holds. 
	%	
	%	Assume that $e_t$ has exactly $k$ poles outside of infinity, then they are the zeroes of $A_k := \sum_{j=1}^k a_je_j$ which is a polynomial of degree $k$. For any $k\leq i < t - \Delta$ we can now find $a_{i, j} \in K$ such that 
	%	$$A_ie_t := \left( \left(\sum_{j=1}^{k-1} a_{i, j}e_j\right)+a_{i, i}e_i\right)e_t \in K \oplus e_2 \mathfrak p_{2t-4}.$$ 
	%	
\end{proof}
\begin{tp}
	Suppose either $\gamma > 2$ or $\Delta > 1$ then, $n= t$ and thus, $S \subset L(D)$ with $\deg D \in \{n + \gamma - 1, n + \gamma - 2\}$. When $\gamma = 2$ and $\Delta = 1$ it holds that $n \geq t$ and $S \subset L(nP_\infty)$. 
\end{tp}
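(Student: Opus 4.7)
The plan is to mirror the strategy of Proposition \ref{Pfirsttisn}: assume $n > t$ for contradiction and use Lemma \ref{Ldivisorgrowth} together with Freiman's $3k-4$ Theorem to rule this out, except in the degenerate subcase $\gamma = 2$, $\Delta = 1$.

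Under the assumption $n > t$ we have $\gamma_{t+1} = \gamma_t = \gamma$. Since $t_1 = 3$, Theorem \ref{Tknownbasis}(ii) provides a basis of $S_{t-1}$ whose elements have poles only at infinity, so $D_{t-1} = (t-1)P_\infty$. Lemma \ref{Ldivisorgrowth}(ii) therefore forces $v_\beta(D_t) = 0$ for every $\beta \in K$, i.e.\ $k = 0$. The previous proposition then gives $\Delta \in \{\gamma-1, \gamma\}$, and Lemma \ref{Ldivisorgrowth}(i) yields $D_{t+1} = 2D_t - D_{t-1} = (t-1+2\Delta)P_\infty$, so $\deg(e_{t+1}) = t-1+2\Delta$.

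I would then apply Freiman's $3k-4$ Theorem to $\deg(S_{t+1}) = \{0, 2, 3, \ldots, t-1, t-1+\Delta, t-1+2\Delta\}$. This set has gcd $1$, hence sits in an arithmetic progression of common difference $1$ and length exactly $t + 2\Delta$, while Freiman bounds the length by $\dim(S_{t+1}^2) - \dim(S_{t+1}) + 1 = t + \gamma + 1$. Thus $2\Delta \leq \gamma + 1$, contradicting $\Delta = \gamma$ when $\gamma \geq 2$ and $\Delta = \gamma - 1$ when $\gamma \geq 4$.

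The main obstacle is the subcase $\gamma = 3$, $\Delta = 2$, where Freiman's bound is saturated. Here a direct computation of $\deg(S_t^2)$ is needed: using the reduction of $e_t$ modulo $S_{t-1}$ in the spirit of the previous proposition one obtains $\deg(S_t^2) = \{0, 1, 2, \ldots, 2t, 2t+2\}$, and in particular $2t+1 \notin \deg(S_t^2)$. The three products $e_{t-2}e_{t+1}$, $e_t e_{t+1}$, $e_{t+1}^2$ then have distinct degrees $2t+1$, $2t+4$, $2t+6$, none of which lies in $\deg(S_t^2)$, so they are linearly independent modulo $S_t^2$. This gives $\dim(S_{t+1}^2/S_t^2) \geq 3$, contradicting $\gamma_{t+1} = \gamma_t$. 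In the remaining subcase $\gamma = 2$, $\Delta = 1$ no such obstruction arises, so $n$ may exceed $t$; iterating Lemma \ref{Ldivisorgrowth}(i) gives $D_i - D_{i-1} = P_\infty$ for all $i > t$, whence $D_n = nP_\infty$ and $S \subset L(nP_\infty)$.
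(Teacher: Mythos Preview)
Your proposal is correct and follows essentially the same route as the paper: assume $n>t$, use Lemma~\ref{Ldivisorgrowth} to force $k=0$ and hence $\Delta\in\{\gamma-1,\gamma\}$, eliminate $\Delta=\gamma$ (for $\gamma\geq 2$) and $\Delta=\gamma-1$ (for $\gamma\geq 4$) via Freiman's $3k-4$ Theorem, and treat $(\gamma,\Delta)=(3,2)$ separately by showing $e_{t-2}e_{t+1}\notin S_t^2$. Your handling of that last subcase is in fact more explicit than the paper's, which simply says ``it is easy to check $e_{t-2}e_{t+1}\not\in S_t^2$''; your computation of $\deg(S_t^2)=\{0,1,\ldots,2t,2t+2\}$ and the observation that $\deg(e_{t-2}e_{t+1})=2t+1$ is precisely what underlies that remark.
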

\begin{proof}
	When $n > t$ we know from Lemma \ref{Ldivisorgrowth} that $k = 0$ and thus $\Delta = \gamma$ or $\Delta = \gamma -1$.  In the first case the degree of $e_{t+1}$ equals $t-1+2\gamma$, which contradicts Freiman's $3k-4$ Theorem for $\gamma \geq 2$. In the second case the degree becomes $t-3 + 2\gamma$, which only contradicts Freiman's $3k-4$ Theorem for $\gamma \geq 4$. This proves the proposition for all cases except $\gamma = 3$ and $\Delta = 2$. However, in that case it is easy to check $e_{t-2}e_{t+1} \not\in S_{t}^2$ and thus $\gamma_{t+1} > \gamma_t$, also a contradiction. 
\end{proof}

\subsubsection*{III. The case $t \in \{\gamma + 1, \gamma + 2\}$ and $t \geq 5$}
For this case the structure of the proof will be quite different from the other cases, as we will actually show that it can never hold. We will consider the options $F = K(e_2, e_3)$ and $F \neq K(e_2, e_3)$ separately in the next two propositions. In both proofs we will still roughly follow steps 2 and 3, in the sense that we try to find relations between the $e_i$ for $1 \leq i \leq \gamma + 3$, which will give us information about which poles they should have. However, we will see that this then implies that $\gamma_{\gamma + 3} > \gamma_t = \gamma$, which is not possible. 
\begin{tp}\label{Pnotfield}
	There exists no $S$ for which $F = K(e_2, e_3)$.
\end{tp}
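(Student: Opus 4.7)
The plan is to assume $F = K(e_2, e_3)$ and derive a contradiction with $\gamma_n = \gamma$ by making every relevant $e_i$ explicit as a rational function in a single generator $x$ of $F$. Observe first that the hypothesis $F = K(e_2, e_3)$ is effectively the same as $F = K(S_{t-1})$ in this case: from the explicit bases of Theorem \ref{Tknownbasis}(ii) for $S_{t-1}$, the pair $\{e_2, e_3\}$ generates $K(S_{t-1})$ in either the subcase $t_1 = t-1$ (where $e_2 = x$, $e_3 = x^2$) or the subcase $t_1 = 3$ (where $e_3/e_2 = x$). So I may fix $x \in F$ with $F = K(x)$ and obtain explicit formulas: $e_i = x^{i-1}$ for $2 \leq i \leq t-2$ together with $e_{t-1} = (x+\alpha)x^{t-2}$ if $t_1 = t-1$, or $e_i = (x+\alpha)x^{i-1}$ for $2 \leq i \leq t-1$ if $t_1 = 3$, for some $\alpha \in K$.

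Next I would pin down $e_t$. Its degree is $t-1+\Delta$ for some $\Delta \in [1, \gamma]$, and since $\gamma_{t-1} = 1$, the co-dimension of $S_{t-1}^2$ inside $S_t^2$ is $\gamma + 1$. Because $t \in \{\gamma+1, \gamma+2\}$, at most one of the $t$ products $e_i e_t$ fails to be needed for a basis of $S_t^2$. Applying Proposition \ref{Pneeded} to the degree table of $S_t^2$, I would identify the unique non-needed product (if any) and read off a relation of the shape $\bigl(\sum_i a_i e_i\bigr)e_t \in S_{t-1}^2 \subset K[x]$. Combined with $e_t \in F = K(x)$, this expresses $e_t$ as an explicit rational function in $x$ with a small, specified set of poles.

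Then I would push this information forward to $e_{t+1}, \ldots, e_{\gamma+3}$, which exist since $\gamma \leq n-3$ forces $n \geq \gamma + 3$. Because $\gamma_i = \gamma$ is constant for $i \geq t$, Lemma \ref{Ldivisorgrowth} gives $D_{i+1} - D_i = D_i - D_{i-1}$, fixing the degrees and the pole locations of all $e_i$ with $t \leq i \leq \gamma + 3$ in terms of data already determined. Now every basis element in sight is an explicit rational function in $x$, and $[F:K(e_t)]$ can be bounded via Lemma \ref{Lpoles} from the denominator that appears.

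The contradiction should then come out of a direct dimension count. I would list all products $e_i e_j$ in $K(x)$ with $1 \leq i \leq j \leq \gamma + 3$ and show, using the explicit formulas, that a product which Proposition \ref{Pneeded} forced to be \emph{needed} for $S_t^2$ (by a degree argument) is in fact already in $S_{t-1}^2$; equivalently, that $\dim S_{\gamma + 3}^2$ exceeds $2(\gamma+3) - 1 + \gamma$, violating $\gamma_{\gamma+3} = \gamma$. The main obstacle is the case analysis: the four combinations of $t_1 \in \{3, t-1\}$ with $t \in \{\gamma+1, \gamma+2\}$ require separate bookkeeping of the explicit polynomial forms of the $e_i$ and of the pole structure coming from $e_t$, since the shape of the relation forced by Proposition \ref{Pneeded} depends on which degree equals which in the table of $S_t^2$. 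Once each subcase is made explicit, the failure of the required independence of the $e_i e_j$ in $K(x)$ yields the desired contradiction.
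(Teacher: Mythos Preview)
Your outline has a real gap and also misses the paper's central shortcut. For the gap: when $t = \gamma + 1$, the co-dimension of $S_{t-1}^2$ in $S_t^2$ equals $\gamma + 1 = t$, so \emph{every} product $e_i e_t$ ($1 \le i \le t$) is needed and there is no relation $\bigl(\sum_i a_i e_i\bigr)e_t \in S_{t-1}^2$ to read off. Your plan to pin down $e_t$ explicitly from such a relation therefore does not get started in that subcase, and the proposal does not say how you would recover.

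The paper avoids any explicit description of $e_t$ by using part~(ii) of Lemma~\ref{Ldivisorgrowth}, not just part~(i). Having observed, as you do, that $F = K(e_2,e_3) = K(S_{t-1})$ forces $D_{t-1}$ to be supported at $P_\infty$, it notes that $n \ge \gamma+3 > t$ gives $\gamma_{t+1} = \gamma_t$, whence Lemma~\ref{Ldivisorgrowth}(ii) yields $v_\alpha(D_t)=0$ for every finite $\alpha$: thus $e_t$ is automatically a polynomial in $x$, with no computation required. The paper also first restricts $\Delta$ to $\{1,2\}$ (and to $\Delta=1$ when $t_1=3$) by using that $e_{t-2}e_{t+1} \in S_t^2$ forces $\deg(e_{t-2}e_{t+1}) \le \deg(e_{t-1}e_t)$. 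With $\Delta$ small and every $e_ie_t$ a polynomial, a short degree check shows that at least two of $e_1e_t,\,e_2e_t,\,e_3e_t$ already lie in $S_{t-1}^2$, contradicting the fact that at most one $e_ie_t$ can be redundant. Your ``direct dimension count'' would need both ingredients---the polynomiality of $e_t$ and the bound on $\Delta$---to close, and the proposal supplies neither.
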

\begin{proof}
	First we restrict the possible values that $\Delta$ can take now that we know $n \geq t + 1$. Consider the element $e_{t-2}e_{t+1}$, which has degree $2t - 3 + 2\Delta$ or $2t - 4 + 2\Delta$, when $t_1$ equals $3$ or $t-1$ respectively. Because $e_{t}e_{t+1}$ and $e_{t+1}^2$ are always the two elements needed for the basis of $S_{t+1}^2$, the element $e_{t-2}e_{t+1}$ must be in $S_t^2$. We find that $\deg(e_{t-2}e_{t+1}) \leq \deg(e_{t-1}e_t) = 2t - 2 + \Delta$. Hence $\Delta$ must equal either $1$ or $2$, where the second can only occur when $t_1 = t-1$.  
	
	Now assume that $F = K(e_2, e_3)$, all poles in $S_{t-1}$ may be assumed to be at infinity because, $t-1 \geq 4 = \gamma_{t-1} + 3$. Since $n > t$, Lemma \ref{Ldivisorgrowth}(ii) now implies that also all poles in $S$ are at infinity. 
	
	Recall that exactly $\gamma + 1$ elements of $\{e_ie_t \mid 1 \leq i \leq t\}$ are needed for the basis of $S_t^2$ or in other words, at most one is not needed. In particular this means that either $e_1e_t$ or $e_2e_t$ is needed. When $t_1 = t - 1$ we know that $S_{t-1}^2 =\mathfrak p_{2t-4}$, hence an element with no poles outside of infinity can only be needed for the basis of $S_t^2$ if its degree is bigger than $2t-4$. This would imply that $\deg(e_2e_t) = t + \Delta > 2t-4$, impossible when $\Delta = 1, 2$ and $t \geq 5$. When $t_1 = 3$, the degree of $e_ie_t$ might be included in $\deg(S_{t-1}^2)$ but still $e_ie_t \not \in S_{t-1}^2$, when $1 \in \deg(\langle S_{t-1}^2, e_ie_t \rangle)$. However, this can only happen once and thus two of $\deg(e_1e_t), \deg(e_2e_t)$ and $\deg(e_3e_t)$ must not be in $\deg(S_{t-1}^2)$. This also gives a contradiction for $\Delta = 1$ and $t \geq 5$. We conclude that $F \neq K(e_2, e_3)$, as wished.
\end{proof}

\begin{tp}
	There exists no $S$ for which $F \neq K(e_2, e_3)$. 
\end{tp}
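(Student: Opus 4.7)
The plan is to rule out the remaining configuration of Case III by a dimension and relation analysis. First, if $t = \gamma + 2$, then Lemma~\ref{Lboundedfieldextension} (with $\delta = \gamma - 1$) gives $F = K(S_{\delta+2}) = K(S_{t-1})$, and Theorem~\ref{Tknownbasis}(ii) applied to $S_{t-1}$ produces $y \in F$ with $F = K(y)$ and an explicit polynomial-in-$y$ basis; in both sub-cases $t_1 \in \{3, t-1\}$ one checks directly that $y \in K(e_2, e_3)$, so $F = K(e_2, e_3)$, contradicting our hypothesis. We may therefore assume $t = \gamma + 1$; set $F' := K(S_{t-1})$ (so $F' \subsetneq F$) and $m := [F:F'] \geq 2$, and apply Theorem~\ref{Tknownbasis}(ii) to $S_{t-1}$ inside $F'$ to obtain $F' = K(y)$ together with the explicit basis from the theorem.

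Next, because $\dim S_t^2 - \dim S_{t-1}^2 = \gamma + 1 = t$, all $t$ products $e_1 e_t, \ldots, e_t e_t$ are needed for $S_t^2$ modulo $S_{t-1}^2$, yielding the $K$-vector-space direct-sum decomposition
\[
S_t^2 \;=\; S_{t-1}^2 \,\oplus\, S_{t-1}\, e_t \,\oplus\, K e_t^2.
\]
Since $n \geq \gamma + 3 > t$, the element $e_{t+1}$ exists, and $\dim S_{t+1}^2 - \dim S_t^2 = 2$ combined with Proposition~\ref{Pneeded}(i) forces $e_1 e_{t+1}, e_2 e_{t+1}, \ldots, e_{t-1} e_{t+1}$ to all lie in $S_t^2$. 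In particular, $e_{t+1}$ itself admits a unique decomposition $e_{t+1} = A + B e_t + C e_t^2$ with $A \in S_{t-1}^2$, $B \in S_{t-1}$, $C \in K$.

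The remaining step is to process the $t-1$ relations $e_i e_{t+1} \in S_t^2$ for $i = 2, \ldots, t-1$ through this decomposition, using that $e_t$ satisfies an irreducible polynomial of degree $m$ over $F'$. The relations arising from the polynomial (in $y$) basis elements $e_2, \ldots, e_{t-2}$ propagate constraints that bound the $y$-degree of $B$ from above. The final relation, involving the top basis element $e_{t-1}$ of $y$-degree $t-1$, then conflicts with these bounds unless $B = 0$, and consequently $C = 0$, forcing $e_{t+1} \in F'$. A short check on the resulting structure $S_{t+1} = U + Ke_t$ with $U \subset F'$ then produces an incompatibility with $\gamma_{t+1} = \gamma$ and $[F:F'] \geq 2$, completing the contradiction.

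The main obstacle will be the final step: the $K$-linear direct-sum decomposition of $S_t^2$ has to be tracked in parallel with the algebraic relation $e_t^m + \cdots = 0$ over $F'$, the two sub-cases $t_1 \in \{3, t-1\}$ require parallel bookkeeping, and in the borderline case $m = 2$ the subspace $K e_t^2$ can be rewritten via $e_t^2 = \alpha' + \beta' e_t$ with $\alpha', \beta' \in F'$, which demands additional care to avoid accidental reabsorption into $F' + F' e_t$.
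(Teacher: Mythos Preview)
Your reduction to $t=\gamma+1$ matches the paper's, and the observation that $\dim S_t^2-\dim S_{t-1}^2=t$ forces the direct-sum splitting $S_t^2=S_{t-1}^2\oplus S_{t-1}e_t\oplus Ke_t^2$ is correct and elegant. From there your strategy diverges from the paper's: rather than writing down the two explicit relations for $e_{t-2}e_{t+1}$ and $e_{t-1}e_{t+1}$ (or $e_{t-3}e_{t+1}$) from the degree table and eliminating $e_{t+1}$, you want to project the $t-1$ relations $e_ie_{t+1}\in S_t^2$ onto the $F'$-components $1,e_t,e_t^2$. When $m=[F:F']\ge 3$ this projection is clean: the $e_t^2$-component of $e_ie_{t+1}$ is $Ce_i$, which must lie in $K$, so $C=0$; the $e_t$-component is $e_iB$, which must lie in $S_{t-1}$ for every $i$, so $B$ stabilises $S_{t-1}$ and hence $B\in K$. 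Then $U:=\langle S_{t-1},\,e_{t+1}-Be_t\rangle\subset F'$ has $\dim U=t$ and, from $\dim S_{t+1}^2=\dim U^2+t+1$, combinatorial genus~$0$; but its natural filtration contains $S_{t-1}$ of combinatorial genus~$1$, contradicting Lemma~\ref{Lgrowinggenus}. So the $m\ge 3$ branch can be made to work, though your ``short check'' actually hides this non-trivial monotonicity step.

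The genuine gap is the case $m=2$, which you flag as the main obstacle but do not resolve. Here $e_t^2=\alpha'+\beta' e_t$ with $\alpha',\beta'\in F'$, so the $e_t$-coefficient of $e_ie_{t+1}$ lands only in $S_{t-1}+K\beta'$, and the stabiliser argument no longer forces $B\in K$. This is precisely the case that survives in the paper: for $t_1=t-1$ the paper's two relations combine (via $e_{t-2}=ye_{t-3}$) to give $[F:K(y)]=1$ outright, so $m\ge 2$ is impossible there; but for $t_1=3$ the relations only yield $[F:K(y)]\le 2$, and the contradiction is then obtained by a careful pole count at the unique finite pole $\alpha$ of $y$, showing that every element of $S_t^2$ of the correct degree has strictly more than $\gamma$ poles at $\alpha$, whereas Lemma~\ref{Ldivisorgrowth} forces $v_\alpha(e_{t+1})>-\gamma$. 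Your projection framework does not obviously reproduce this valuation argument, so as written the proposal is incomplete at exactly the point where the paper does the most work.
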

\begin{proof}
 First we show that $F \neq K(e_2, e_3)$ can only happen if $t = \gamma + 1$. By Lemma \ref{Lboundedfieldextension} it holds that $F = K(e_2, e_3, e_{\gamma + 1})$, thus Proposition \ref{Pnotfield} implies that $e_{\gamma+1} \not \in K(e_2, e_3)$. When $t = \gamma + 2$ we have $\gamma_{\gamma+1} = 1$ and therefore $F(S_{\gamma+1}) = F(S_3) = F(e_2, e_3)$, a contradiction. From now on we assume $t = \gamma + 1$.

	Let $y \in F$ such that $K(e_2, e_3) = K(y)$ and for $1 \leq i \leq t-1$ we can write $e_i$ as a polynomial in $y$. From our previous observation we know that $e_{t} \not\in K(y)$ and $F = K(y, e_t)$.
	
	When $t_1 = t-1$, following the same argument as in the proof of Proposition \ref{Pnotfield}, we know $\Delta$ equals either $1$ or $2$. However, when $\Delta = 2$ we find that $e_{t-3}e_{t+1}$ is needed for the basis of $S_{t+1}^2$ by Proposition \ref{Pneeded}(ii), contradicting $\gamma_{t+1} = \gamma_t$. We conclude that $\Delta = 1$ and using Table \ref{TABgamma1I} we can find the following relations:
	\begin{align*}
		e_{t-2}e_{t+1} &= e_{t-1}^2 + \sum_{\substack{1 \leq i \leq t-2 \\ i \leq j \leq t}} a_{i, j}e_ie_j, \\
		e_{t-3}e_{t+1} &= e_{t-2} e_{t} + \sum_{\substack{1 \leq i \leq j \leq t \\ i+j \leq 2t-3}} b_{i, j}e_{i}e_j,
	\end{align*}
	for $a_{i, j}, b_{i,j} \in K$. We know that $e_{t-2} = ye_{t-3}$, thus when multiplying the second relation with $y$ the left hand side is the same as in the first relation. From this we find $[F : K(y)] = 1$, a contradiction. 
	
	When $t_1 = 3$ we know that $\Delta = 1$ and find the following relations using Table \ref{TABgamma1III},
	\begin{align*}
		e_{t-1} e_{t+1} &= e_{t}^2 + \sum_{\substack{1 \leq i \leq t-1 \\ i \leq j \leq t}} a_{i, j}e_ie_j, \\
		e_{t-2}e_{t+1} &= e_{t-1} e_{t} + \sum_{\substack{1 \leq i \leq j \leq t \\ i+j \leq 2t-2}} b_{i, j}e_{i}e_j,
	\end{align*}
	for some $a_{i, j}, b_{i, j} \in K$. We also know from Theorem \ref{Tknownbasis} that $f := \frac{e_{t-1}}{e_{t-2}}$ is a polynomial of degree $1$ in $y$. Multiplying the second relation with $f$ and comparing its right hand side with the right hand side of the first relation we find $[F : K(y)] \leq 2$ and $[F : K(e_{t})] \leq 2\gamma + 1$. Since $F \neq K(y)$, this implies that $y$ must have exactly one pole outside of infinity, at $\alpha$, and therefore $e_i$ has $i$ poles at $\alpha$ for all $2 \leq i \leq t-1$. By possibly translating $e_t$ with $e_{t-1}$, we find that $e_{t}$ also has at least $\gamma$ poles at $\alpha$. Since it already has $\gamma + 1$ poles at infinity, these $2\gamma + 1$ have to be all the poles of $e_{t}$. By Lemma \ref{Ldivisorgrowth} we find that $v_{\alpha}(e_{t+1}) > - \gamma$, we will show that this contradicts the fact that $e_{t+1} \in S_{t}^2$. 
	
	Define $T := \{(j-1, j) \mid 2 \leq j \leq t\}\cup\{(j, j), (j, t) \mid 1  \leq j \leq t\}\cup\{(1, 3)\}$ then, $\{e_ie_j \mid (i, j) \in T\}$ is a basis of $S_{t}^2$, these are marked as bold numbers in Table \ref{TABgamma1III}. Therefore there are $c_{i, j} \in K$ such that 
	$$e_{t+1} = \sum_{(i, j) \in T} c_{i, j}e_{i}e_j.$$ 
	Consider the elements $e_ie_j$ of maximal degree for which $c_{i, j} \neq 0$, there are either one or two of such elements. There is one element if and only if $\deg(e_ie_j)= t+1 = \gamma +2 $, which is only possible if $i, j \neq t$. Hence, $v_{\alpha}(e_ie_j) = -(\gamma + 2)$ and all other elements for which $c_{i, j} \neq 0$, will have even less poles at $\alpha$. By the ultrametric property we find, $v_\alpha(e_{t+1}) = - (\gamma + 2)$, a contradiction. In case that there are two elements of maximal degree with $c_{i, j}, c_{i', j'} \neq 0$, we know that this degree must be strictly bigger than $t+1$. One of them will be of the form $j = t$ and thus have $\gamma + i$ poles at $\alpha$, the other will have $i', j' \neq t$ and therefore the number of poles at $\alpha$ will equal its degree $i'+j' = i + t + 1 = i + \gamma + 2$. By the ultrametric property we find that $v_{\alpha}(e_{t+1}) = -(i' + j') < -(\gamma + 2)$. We conclude that it is impossible for $e_{t+1}$ to have at most $\gamma$ poles at $\alpha$, proving that no $S$ exists. \end{proof}

	\begin{table} 
	\begin{center}
		\begin{tabular}{c | c | c | c | c | c | c | c | c | c}
			&$e_1$&$e_2$&$e_3$& $e_4$ &$\ldots$ &$e_{t-2}$& $e_{t-1}$ & $e_{t}$ & $e_{t+1}$ \\\hline
			$e_1$& $\bm0$ & $\bm2$& $\bm3$ & $4$ & $\ldots$ &$t-2$ & $t-1$ & $\bm{t}$ & $t+1$ \\\hline
			$e_2$& - & $\bm4$ & $\bm5$& $6$ & $\ldots$ &$t$ & $t+1$ & $\bm{t+2}$ & $t+3$\\\hline
			$e_3$& - & - & $\bm6$& $\bm7$& $\ldots$ &$t+1$ & $t+2$ &$ \bm {t+3}$ & $t+ 4$\\\hline
			$e_4$& - & - & -& $\bm8$& $\ldots$ &$t+2$ & $t+3$ & $\bm{t+4}$ & $t+ 5$\\\hline
			$\vdots$& $\vdots$ & $\vdots$ & $\vdots$ & $\vdots$& $\bm\ddots$ & $\vdots$ & $\vdots$ & $\bm\vdots$ & $\vdots$ \\\hline
			$e_{t-2} $& - & - & -&-& $\ldots$ &$\bm{2t-4}$ & $\bm{2t-3}$ & $\bm{2t-2}$ & $2t-1$  \\\hline
			$e_{t-1}$ & - & - & -&-&  $\ldots$ &- & $\bm{2t-2}$ & $\bm{2t-1}$& $2t$\\\hline
			$e_{t}$ &-&-&-&-&$\ldots$&-&- & $\bm{2t}$ & $2t+1$\\\hline
			$e_{t+1}$ & - & - &-& - & $\ldots$&-&-&-&$2t+2$ \\
		\end{tabular}
		\caption{The degree table of $S_{t+1}^2$ when $t_1 = 3$. In each cell the degree of the product $e_{i}e_j$ is written. The bold numbers correspond to a possible basis for $S_t^2$.}\label{TABgamma1III}
	\end{center}
\end{table}

%	\begin{table} 
%	\begin{center}
%		\begin{tabular}{c | c | c | c | c | c | c | c | c | c}
%			&$e_1$&$e_2$&$e_3$& $e_4$ &$\ldots$ &$e_{t-2}$& $e_{t-1}$ & $e_{t}$ & $e_{t+1}$ \\\hline
%			$e_1$& $\bm0$ & $\bm2$& $\bm3$ & $4$ & $\ldots$ &$t-2$ & $t-1$ & $\bmt-1+\Delta$ & $t-1+2\Delta$ \\\hline
%			$e_2$& - & $\bm4$ & $\bm5$& $6$ & $\ldots$ &$t$ & $t+1$ & $\bmt+1+\Delta$ & $t+1+2\Delta$\\\hline
%			$e_3$& - & - & $\bm6$& $\bm7$& $\ldots$ &$t+1$ & $t+2$ &\color{red} $t+2+\Delta$ & $t+ 2+ 2\Delta$\\\hline
%			$e_4$& - & - & -& $\bm8$& $\ldots$ &$t+2$ & $t+3$ & $\bmt+3+\Delta$ & $t+ 3+ 2\Delta$\\\hline
%			$\vdots$& $\vdots$ & $\vdots$ & $\vdots$ & $\vdots$& $\bm\ddots$ & $\vdots$ & $\vdots$ & $\bm\vdots$ & $\vdots$ \\\hline
%			$e_{t-2} $& - & - & -&-& $\ldots$ &$\bm2t-4$ & $\bm2t-3$ & $\bm2t-3+\Delta$ & $2t-3+2\Delta$  \\\hline
%			$e_{t-1}$ & - & - & -&-&  $\ldots$ &- & $\bm2t-2$ & $\bm2t-2+\Delta$& $2t-2+2\Delta$\\\hline
%			$e_{t}$ &-&-&-&-&$\ldots$&-&- & $\bm2t-2+2\Delta$ & $2t-2+3\Delta$\\\hline
%			$e_{t+1}$ & - & - &-& - & $\ldots$&-&-&-&$2t-2 + 4\Delta$ \\
%		\end{tabular}
%		\caption{The degree table of $S_{t+1}^2$ for $t_1 = 3$. In each cell the degree of the product $e_{i}e_j$ is written. The red numbers correspond to a possible basis for $S_t^2$.}\label{TABgamma1III}
%	\end{center}
%\end{table}

	\subsubsection*{IV. The case $t=4$}
When $t=4$ the dimension of $S_t^2$ is at most $10$ and therefore $\gamma = 2$ or $\gamma = 3$. 

\begin{tl}\label{Lpossdeg}~
	\begin{enumerate}[\normalfont(i)]
		\item 	When $\gamma = 2$ the set of degrees of $S_5$ equals $\{0, 1, 2, 3, 4\}$, $\{0, 1, 3, 4, 5\}$, $\{0, 2, 3, 4, 5\}$ or $\{0, 3, 4, 5, 6\}$. 
		\item 	When $\gamma = 3$ the set of degrees of $S_6$ equals $\{0, 1, 2, 3, 4, 5\}, \{0, 2, 3, 4, 5, 6\},$ \\ $ \{0, 3, 4, 5, 6, 7\}, \{0, 4, 5, 6, 7, 8\}, \{0, 1, 3, 4, 5, 6\}$ or $\{0, 2, 4, 5, 6, 7\}$
	\end{enumerate}
\end{tl}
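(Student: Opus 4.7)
My plan is to reduce the problem to a finite search via Freiman's Theorem, apply sumset bounds to eliminate most candidate degree sets, and then handle the borderline case $a_3 = 2 a_2$ by tracking extra degrees forced by the filtered basis structure of $S_3^2$.

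By the corollary following Lemma \ref{Lboundedfieldextension} we have $F = K(S_4)$ in case IV, so Lemma \ref{Lgcd} allows the normalisation $\gcd(\deg(S_4)) = 1$ after a change of $v_\infty$. Applying Freiman's $3k - 4$ Theorem to $A = \deg(S_n)$, noting that $|A + A| = \dim S_n^2 = 2n - 1 + \gamma = 3|A| - 4$ saturates Freiman's hypothesis, places $A$ in an arithmetic progression of length at most $|A + A| - |A| + 1 = n + \gamma$. Combined with $0 \in A$ and $\gcd(A) = 1$, this forces $\deg(S_5) \subseteq \{0, 1, \ldots, 6\}$ in (i) and $\deg(S_6) \subseteq \{0, 1, \ldots, 8\}$ in (ii), leaving a short list of candidate subsets containing $0$.

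For each candidate I would then impose the necessary condition $|\deg(S_i) + \deg(S_i)| \leq \dim S_i^2 = 2 i - 1 + \gamma_i$ at $i = 3, 4, 5$ (and $i = 6$ in (ii)). In case IV the intermediate genera are pinned down: $\gamma_3 = 1$, because $\gamma_1 = \gamma_2 = 0$, the set $\{\gamma_i\}$ equals $\{0, 1, \gamma\}$, and $\gamma_4 = \gamma \geq 2$ compels $1$ to appear at index $3$; while $\gamma_i = \gamma$ for all $i \geq 4$. Most candidates are discarded by this direct sumset test. For the remaining borderline ones with $a_3 = 2 a_2$, the sumset of $\deg(S_3)$ has only five elements, so $\dim S_3^2 = 6$ forces an extra degree in $\deg(S_3^2)$; this extra comes from $f := e_3 - c\, e_2^2$ with $c \in K$ chosen to cancel the leading $2 a_2$ term. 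A filtered-basis argument on $S_3^2$ shows that $\deg(f) \notin \{0, a_2\}$, since otherwise $f \in \langle 1, e_2 \rangle$ and then $e_3 \in \langle 1, e_2, e_2^2 \rangle$ would make $\gamma_3$ collapse to $0$. Together with $\deg(f) < 2 a_2$, this places $\deg(f)$ either in a positive gap of the sumset below $2 a_2$ (possible only when $a_2 \geq 2$) or at a strictly negative value, in which case $e_3$ has a pole at some $\alpha \neq \infty$ and then $e_3^2$ contributes a further inevitable extra of degree $\leq -2$ in $\deg(S_i^2)$ for $i \geq 4$ as soon as a polynomial product of degree $2 a_2$ (such as $e_2 e_4$) is available. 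Requiring all these extras to match the deficit $\dim S_i^2 - |\deg(S_i) + \deg(S_i)|$ either confirms the candidate (e.g.\ $\{0, 1, 2, 3, 4\}$, where the pair $\{-1, -2\}$ precisely fills the dimension deficit) or rules it out (e.g.\ $\{0, 1, 2, 3, 5\}$, where only one extra fits but two are forced).

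The main obstacle I expect is precisely this bookkeeping in the $a_3 = 2 a_2$ cases: for every surviving candidate one must track how the pole structure of $e_3$ propagates through the filtration, verifying either a compatible realization or an explicit obstruction. Once this analysis is completed for each candidate, only the four (resp.\ six) sets listed remain.
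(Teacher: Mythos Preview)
Your plan has a genuine gap: you never invoke Lemma~\ref{Ldivisorgrowth}(i), which is the paper's key structural input here. Since $\gamma_5 = \gamma_4$ (and for $\gamma=3$ also $\gamma_6=\gamma_5$), that lemma forces $D_5-D_4=D_4-D_3$, and in particular $\deg(e_5)-\deg(e_4)=\deg(e_4)-\deg(e_3)$. Writing $d=\deg(e_2)$, $k=\deg(e_3)-\deg(e_2)$, $\ell=\deg(e_4)-\deg(e_3)$, this pins $\deg(S_5)=\{0,d,d+k,d+k+\ell,d+k+2\ell\}$ and reduces the Freiman search to seven triples $(d,k,\ell)$. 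Your proposal, by contrast, must sift through all $5$-subsets of $\{0,\dots,6\}$ containing $0$, and your filters do not catch them all. Concretely, take $\deg(S_5)=\{0,2,3,4,6\}$: it has $\gcd(\deg(S_4))=1$, satisfies $a_3\neq 2a_2$ so it avoids your ``borderline'' branch, and its sumsets at $i=3,4,5$ have sizes $6,8,11$, all within the bounds $6,9,11$. None of your stated tests eliminate it, yet it fails $a_5-a_4=a_4-a_3$ and is therefore excluded by Lemma~\ref{Ldivisorgrowth}(i) immediately.

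Even after restricting to the arithmetic-progression tails, a pure sumset count at $i=5$ is not quite enough: the candidates $\{0,1,2,4,6\}$ and $\{0,2,4,5,6\}$ (i.e.\ $(d,k,\ell)=(1,1,2)$ and $(2,2,1)$) both have $11$-element sumsets. These are exactly the $a_3=2a_2$ cases, and your idea of locating the ``extra'' degree in $\deg(S_3^2)$ does dispose of them --- for instance, for $(2,2,1)$ one checks that $\deg(S_4^2)$ equals the $9$-element sumset $\{0,2,4,5,6,7,8,9,10\}$ on the nose, so the extra degree of $S_3^2$ would have to lie in $\{5,7\}$, whereas any cancellation $e_3-ce_2^2$ produces degree $<4$, a contradiction. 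But this finishing step is short and self-contained once the search space has already been cut down by Lemma~\ref{Ldivisorgrowth}(i); it is not a substitute for that lemma. Your more elaborate pole-tracking programme (negative degrees, propagation of poles of $e_3$ through $e_3^2$, etc.) is both harder to make rigorous and unnecessary for the cases that actually remain.
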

\begin{proof}
	We only give the proof for $\gamma = 2$, the case $\gamma = 3$ is very similar. Define $d := \deg(e_2), k := \deg(e_3) - \deg(e_2)$ and $\ell := \deg(e_4) - \deg(e_3)$, by Lemma \ref{Ldivisorgrowth}(i) we know that $\deg(e_5) = d + k + 2\ell$. Freiman's $3k-4$ Theorem then implies that $d+k+2\ell \leq 6$, which gives us finitely many options for $(d, k, \ell)$. For each of these options one may look at the degree table of $S_5^2$, if there are at least $12$ distinct values in this table this would imply $\gamma > 2$, which leaves us with the four cases mentioned in the lemma.  \qedhere

\end{proof}

We will give the explicit calculation for the case $\{0, 1, 3, 4, 5\}$ and then only state the results for the other cases. The degree table of $S_5^2$ is shown in Table \ref{TABdegfifth}, where the bold numbers point out a basis of $S_5^2$ and the non-bold ones are dependent on them. 

\begin{table} [h]
	\begin{center}
		\begin{tabular}{c | c | c | c | c | c}
			&$e_1$&$e_2$&$e_3$&$e_4$&$e_5$\\\hline
			$e_1$& $\bm0$ & $\bm1$& $\bm3$ & \color{black}{$4$}& \color{black}$5$\\\hline
			$e_2$& - & $\bm2$ & $\bm4$& $\bm5$& \color{black}$6$ \\\hline
			$e_3$& - & - & $\bm6$& $\bm7$& \color{black}$8$ \\\hline
			$e_4$& - & - & -& $\bm8$& $\bm9$ \\\hline
			$e_5$& - & - & -& -& $\bm{10}$ \\
		\end{tabular}
		\caption{The degree table of $S_5^2$ when the degrees in $S_5$ equal $0, 1, 3, 4, 5$.}\label{TABdegfifth}
	\end{center}
\end{table}

From this we can find that (after possibly translating the basis elements by multiples of each other) there must exist $a_1, a_2, \ldots, a_8 \in K$ such that the following relations hold: 
\begin{align*}
	&e_4 = e_2e_3 + a_1e_2^2, \\
	&e_5 = e_2e_4 + a_2e_2^2,\\
	&e_2e_5 = e_3^2 + a_3e_2e_4 + a_4e_2e_3 + a_5e_2^2 + a_6e_3 + a_7e_2  + a_8.
\end{align*}
By substituting $e_4$ and $e_5$ in the last equation we find a relation between $e_2$ and $e_3$ of degree at most $4$ in $e_2$ and degree at most $2$ in $e_3$. Recall that $F = K(x) = K(e_2. e_3)$, we can now conclude that $[F: K(e_2)] \leq 2$ and $[F : K(e_3)] \leq 4$. This implies that $e_2$ has at most $2$ poles and $e_3$ at most $4$, hence they both have at most $1$ pole outside of infinity. 

Assume that $e_3$ has a pole outside of infinity at $\alpha$ and $e_2$ does not. Then, by our first two relations, both $e_4$ and $e_5$ have exactly one pole at $\alpha$. However, our third relation would then imply that $e_2e_5$ has two poles at $\alpha$, which is a contradiction. We conclude, if $e_3$ has a pole outside of infinity, $e_2$ should have the same pole.

If $e_2$ does not have any poles outside of infinity then neither do $e_4$ and $e_5$, because $e_4 \in S_3^2$ and $e_5 \in S_4^2$. Here we find $S_5 \subset L(5P_\infty)$.

If $e_2$ does have a pole at $\alpha \neq \infty$, then $e_4$ has either $0, 1$ or $2$ poles there. However, if $e_4$ would have $2$ poles at $\alpha$, then $e_5$ would have $3$ and thus $e_2e_5$ has $4$, this cannot be true by the third relation, since the right hand side can have at most $3$. Here we find $S_5  \subset L(5P_\infty + P_\alpha)$. 

In both situations the basis can be continued. Using Lemma \ref{Ldivisorgrowth} we find that either $S \subset L(nP_\infty)$ or $S \subset L(nP_\infty + P_\alpha)$. 

\Change{
\begin{tl} Let $\gamma = 2$,
	\begin{enumerate}[\normalfont(i)]
		\item when $\deg(S_5) = \{0, 1, 2, 3, 4\}$ we find $S \subset L((n-1)P_\infty + P_\alpha)$ for $\alpha \in K$ or $S \subset L((n-1)P_\infty + P_{\alpha_1} + P_{\alpha_2})$ where $\alpha_1, \alpha_2 \in K$ might be equal.
		\item when $\deg(S_5) = \{0, 2, 3, 4, 5\}$ we find $S \subset L(nP_\infty)$ or $S \subset L(nP_\infty + P_\alpha)$ for some $\alpha \in K$. 
		\item when $\deg(S_5) = \{0, 1, 3, 4, 5\}$ we find $S \subset L(nP_\infty)$ or $S \subset L(nP_\infty + P_\alpha)$ for some $\alpha \in K$.
		\item when $\deg(S_5) = \{0, 3, 4, 5, 6\}$ we find $S \subset L((n+1)P_\infty)$. 
	\end{enumerate}
\end{tl}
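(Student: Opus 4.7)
My plan is to handle the three remaining cases by applying the recipe used in the worked example of case (iii). For each, I first write the degree table of $S_5^2$, pick a basis of $S_5^2$ consisting of elements of $E_5$ by selecting one representative at each degree where several products coincide, and then read off relations of the form
\[
    e_i e_j = c\, e_{i'} e_{j'} + R_{i,j},
\]
with $R_{i,j} \in S_5^2$ of strictly lower degree, one such relation per coincidence. After possibly translating basis elements by multiples of each other, these relations can be rearranged to express the top-degree elements $e_5, e_4, e_3$ as polynomials or rational functions in $e_2$ and one additional generator, yielding an algebraic identity that bounds $[F : K(e_2)]$ via Lemma \ref{Lpoles} and hence bounds the number of poles of $e_2$ outside infinity.

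For case (iv) with $\deg(S_5) = \{0,3,4,5,6\}$ the degree table has four coincidences, at degrees $6, 8, 9$ and $10$, producing four relations. Eliminating step by step collapses $F$ to $K(e_2, e_3)$ and yields $[F : K(e_2)] \leq 3$; since $e_2$ already has three poles at $P_\infty$, Lemma \ref{Lpoles} forbids any finite pole of $e_2$, and propagating through the relations shows that $e_3, e_4, e_5$ also have no finite poles, so $S_5 \subset L(6 P_\infty)$. Lemma \ref{Ldivisorgrowth}(i) then extends this to $S \subset L((n+1) P_\infty)$. For cases (i) and (ii) the analysis will parallel the worked case (iii): the relations produced bound $[F : K(e_2)] \leq 2$, so $e_2$ has at most one finite pole, and I split into sub-cases according to whether such a pole occurs. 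If it does not, all $e_i$ have only poles at infinity, giving $S \subset L(n P_\infty)$ in case (ii) and $S \subset L((n-1) P_\infty + P_\alpha)$ in case (i), where the third relation still forces a single unavoidable finite pole on $e_5$. If $e_2$ does have a finite pole at some $\alpha \in K$, the relations transfer it to $e_3, e_4, e_5$ with controlled multiplicities, yielding the divisors $n P_\infty + P_\alpha$ in case (ii) and $(n-1) P_\infty + P_{\alpha_1} + P_{\alpha_2}$ in case (i).

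The main obstacle will be the bookkeeping in case (i), where two distinct finite poles can appear and one must check that neither configuration increases $\gamma_5$ above $2$. This requires a valuation check on the third relation analogous to the contradiction argument at the end of the worked case (iii), ruling out pole multiplicities that would force an extra basis element of $S_5^2$. After $D_5$ has been determined in each case, Lemma \ref{Ldivisorgrowth}(i) propagates the bound to $D_i$ for every $i \geq 5$, using $\gamma_i = \gamma_5 = 2$ throughout, and yields the divisors claimed in the lemma.
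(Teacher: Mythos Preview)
Your approach matches the paper's, which only works out case (iii) explicitly and leaves (i), (ii), (iv) to the reader. For case (iv) your sketch is sound: the degree table of $S_5^2$ has exactly eleven distinct degrees $\{0,3,4,\dots,12\}$, matching $\dim S_5^2 = 11$, so one representative per degree \emph{does} give a basis, and eliminating $e_4$ and $e_5$ from the resulting relations yields a relation cubic in $e_3$ over $K(e_2)$, hence $[F:K(e_2)] \leq 3$ as you claim.

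There is, however, a genuine gap in your treatment of cases (i) and (ii). In case (i) the degree table of $S_5^2$ only contains the nine values $0,1,\dots,8$, and in case (ii) only the ten values $0,2,3,\dots,10$; but $\dim S_5^2 = 11$ in both. So ``selecting one representative at each degree'' cannot produce a basis of $S_5^2$, and the relation-reading you describe does not go through verbatim: at some degrees two $E_5$-elements must be kept as independent basis vectors, and deciding which requires already knowing something about the finite poles. This is not a cosmetic issue---it is precisely the mechanism by which finite poles are forced in these cases.

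Relatedly, your sub-case split for (i) is inconsistent. Since $t_1 = 3$ here, $\gamma_3 = 1$ and $\dim S_3^2 = 6$; but if neither $e_2$ nor $e_3$ had a finite pole then $S_3 \subset \mathfrak p_2$ and $S_3^2 \subset \mathfrak p_4$, which has dimension $5$. Thus a finite pole is already forced on $e_2$ or $e_3$, not on $e_5$ via a ``third relation'' as you suggest. The correct analysis for (i) (and, to a lesser extent, (ii)) must begin from the structure of $S_3$ and $S_4$, where the extra dimension first appears, and track the finite pole(s) forward via Lemma \ref{Ldivisorgrowth}, rather than starting from the degree table of $S_5^2$ alone.
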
}

\begin{tl}\label{Le4}
	When $\gamma = 3$ we have $F = K(e_2, e_3)$.
\end{tl}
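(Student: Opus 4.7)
The plan is to establish $e_4 \in K(e_2, e_3)$; combined with $F = K(S_4) = K(e_2,e_3,e_4)$ from Lemma~\ref{Lboundedfieldextension} (applied with $\delta = \gamma_4 - \gamma_3 = 2$), this yields $F = K(e_2,e_3)$. The hypothesis $\gamma \leq n-3$ of Theorem~\ref{Tmain} forces $n \geq 6$, so Lemma~\ref{Lpossdeg}(ii) restricts $\deg S_6$ to one of the six listed sets. Of these, exactly four are compatible with $\gamma_3 = 1$ (equivalently, $\deg S_3$ is not an arithmetic progression), namely $\{0,2,3,4,5,6\}$, $\{0,3,4,5,6,7\}$, $\{0,4,5,6,7,8\}$, and $\{0,1,3,4,5,6\}$; I would handle each in turn.

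In all four cases the equality $t = 4$ gives $\gamma_5 = \gamma_4 = 3$, so $\dim S_5^2 - \dim S_4^2 = 2$. Because $\deg(e_4 e_5)$ and $\deg(e_5^2)$ both strictly exceed $2\deg e_4 = \max \deg S_4^2$, these two products supply the new dimensions and consequently $e_5, e_2 e_5, e_3 e_5 \in S_4^2$. This immediately eliminates $\{0,4,5,6,7,8\}$, since there $\deg e_5 = 7 \notin \deg S_4^2 = \{0,4,5,6,8,9,10,11,12\}$ and so $e_5 \notin S_4^2$. It also eliminates $\{0,2,3,4,5,6\}$: the unique element of $E_4$ of degree $5$ is $e_2 e_3$, so $e_5 = a_{23}e_2 e_3 + (\text{terms of degree} \leq 4)$ with $a_{23}\ne 0$, and matching the unique degree-$7$ terms $a_{23} e_2^2 e_3$ of $e_2 e_5$ with $b_{34} e_3 e_4$ from its $S_4^2$-expansion forces $a_{23}e_2^2 - b_{34}e_4 \in \langle 1, e_2, e_3 \rangle$, placing $e_4$ in the four-dimensional span $\langle 1, e_2, e_3, e_2^2 \rangle$ and violating $\dim S_4^2 = 10$.

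For the remaining two cases I would expand $e_5 \in S_4^2$ in the basis $E_4$, identifying the leading term as the unique top-degree product at degree $\deg e_5$ (respectively $e_2^2$ for $\{0,3,4,5,6,7\}$ and $e_2 e_4$ for $\{0,1,3,4,5,6\}$) with nonzero coefficient $c$. Multiplying by $e_2$ and subtracting the $S_4^2$-expansion of $e_2 e_5$ produces a polynomial relation $R(e_2, e_3, e_4) = 0$ in $F$. Viewed as a polynomial in $e_4$ over $K[e_2, e_3]$, the $e_4$-coefficient is a $K$-linear combination of $1, e_2, e_3, e_2^2$; it is nonzero because the analogous unique-top-degree matching (now $c e_2^3$ against $b_{34} e_3 e_4$, or $c e_2^2 e_4$ against $b_{33} e_3^2$) forces $b_{34}$ or $b_{33}$ to be nonzero via the nonzero leading $v_\infty$-coefficients of $e_2, e_3, e_4$. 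Solving for $e_4$ then exhibits it as a rational function of $e_2$ and $e_3$. The hard step is this final nonvanishing check: the same degree-matching argument that produces an outright contradiction in the case $\{0,2,3,4,5,6\}$ here yields a usable nonzero $e_4$-coefficient, the difference being that the unique top-degree product now involves $e_4$ nontrivially on exactly one side.
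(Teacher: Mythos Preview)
Your overall strategy matches the paper's---reduce to $e_4 \in K(e_2,e_3)$ via Lemma~\ref{Lboundedfieldextension} and extract a relation linear in $e_4$ from the containments $e_5,\,e_2e_5 \in S_4^2$---but the execution has real gaps. The asserted equivalence ``$\gamma_3 = 1$ iff $\deg S_3$ is not an arithmetic progression'' is false: with $\deg S_3 = \{0,1,2\}$ one can still have $\gamma_3 = 1$ whenever $e_3 \notin \langle 1,e_2,e_2^2\rangle$, for instance if $e_3$ carries a pole away from $P_\infty$. All six degree sets in Lemma~\ref{Lpossdeg}(ii) were derived under the standing hypotheses $t=4$, $t_1=3$ of Case~IV, so none may be discarded on this ground; the cases $\{0,1,2,3,4,5\}$ and $\{0,2,4,5,6,7\}$ are therefore simply unaddressed in your argument.

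Your eliminations of $\{0,4,5,6,7,8\}$ and $\{0,2,3,4,5,6\}$ also fail. For the first you assert $7 \notin \deg S_4^2$, but the nine values you list are the degrees of the \emph{generators} $E_4$, not of the ten-dimensional space $S_4^2$; the missing tenth degree comes from a combination of the two degree-$10$ generators $e_3^2$ and $e_2e_4$, and since we already know $e_5 \in S_4^2$ with $\deg e_5 = 7$, that tenth degree is precisely $7$---no contradiction. For the second, the claim $a_{23}\neq 0$ overlooks the same phenomenon one step down: $e_3^2$ and $e_2e_4$ (both of degree $6$) can combine to supply the degree-$5$ leading part of $e_5$, so $e_2e_3$ need not appear at all. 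Both of these degree sets are in fact carried through as genuine possibilities in the lemma immediately following Lemma~\ref{Le4}, so they cannot be disposed of by contradiction. The paper avoids all of this case-splitting: since $e_4^2$ has strictly the largest degree in $E_4$ and $\deg e_5,\ \deg(e_2e_5) < 2\deg e_4$ in every one of the six cases, one gets $e_5,\,e_2e_5 \in S_3S_4$ uniformly, and $e_2e_5 \notin S_3^2$ then guarantees a nonzero $e_4$-coefficient after substituting the first relation into the second.
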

\begin{proof}
	We already know that $F = K(e_2, e_2, e_4)$, so it suffices to show that $e_4 \in K(e_2, e_3)$. By looking at the degree table of $S_5^2$ for any of the possible sets in Lemma \ref{Lpossdeg}(ii) we find that $e_5, e_2e_5 \in S_3S_4$ and $e_2e_5 \not\in S_3^2$. Therefore we have relations of the following form
	\begin{align*}
		e_5 &= \sum_{\substack{1 \leq i \leq 3 \\ i \leq j \leq 4}} a_{i, j}e_ie_j, \\
		e_2e_5 &= \sum_{\substack{1 \leq i \leq 3 \\ i \leq j \leq 4}} b_{i, j}e_ie_j,
	\end{align*}
for some $a_{i, j}, b_{i, j} \in K$. Substituting the first equation in the second we find a relation between $e_2, e_3$ and $e_4$, that is linear in $e_4$, showing that indeed $e_4 \in K(e_2, e_3)$. 
\end{proof}
\Change{
\begin{tl} Let $\gamma = 3$,
	\begin{enumerate}[\normalfont(i)]
		\item when $ \deg(S_6) = \{0, 4, 5, 6, 7, 8\}$ we find $S \subset L((n+2)P_\infty)$. 
		\item when $ \deg(S_6) = \{0, 3, 4, 5, 6, 7\}$ and $\{0, 2, 4, 5, 6, 7\}$ we find $S \subset L((n+1)P_\infty)$ or $S \subset L((n+1)P_\infty + P_\alpha)$ for some $\alpha \in K$. 
		\item when $ \deg(S_6) = \{0, 1, 3, 4, 5, 6\}$ and $\{0, 2, 3, 4, 5, 6\}$ we find $S \subset L(nP_\infty + P_\alpha)$ for some $\alpha \in K$ or $S \subset L(nP_\infty + P_{\alpha_1} + P_{\alpha_2})$ where $\alpha_1, \alpha_2 \in K$ might be equal. 
		\item when $ \deg(S_6) = \{0, 1, 2, 3, 4, 5\}$ we find $S \subset L((n-1)P_\infty + P_{\alpha_1} + P_{\alpha_2})$ or $S \subset L((n-1)P_\infty + P_{\alpha_1} + P_{\alpha_2} + P_{\alpha_3})$, where $\alpha_1, \alpha_2, \alpha_3 \in K$ might be equal.
	\end{enumerate}
\end{tl}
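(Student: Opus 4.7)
The plan is to treat each of the six degree sets listed in Lemma \ref{Lpossdeg}(ii) in the style of the explicit argument already given for the case $\deg(S_5) = \{0,1,3,4,5\}$. In each of the six subcases I would first draw the $6\times 6$ degree table of $S_6^2$ and, using Proposition \ref{Pneeded} together with $\dim S_6^2 = 2\cdot 6 - 1 + 3 = 14$, identify a basis of $S_6^2$ sitting inside $E_6$ (there will be seven non-basis entries to be expressed as linear combinations of the basis entries). The basis will always contain $e_2^2, e_2 e_3, e_3^2$ together with those products whose degree is maximal in their row or column; the rest of the choice is rigid in the four ``spread'' cases and requires a triangular base change in the two ``compressed'' ones.

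The second step is to read off the relations. After a triangular change of basis one obtains three relations of the form $e_4 = P_4(e_2, e_3)$, $e_5 = P_5(e_2, e_3, e_4)$, $e_6 = P_6(e_2, e_3, e_4, e_5)$ (making $e_4, e_5, e_6$ polynomial in $e_2, e_3$, consistently with Lemma \ref{Le4}), together with at least one extra compatibility relation expressing for instance $e_2 e_6$ or $e_3 e_6$ as a polynomial in the chosen basis entries. Substituting the $P_i$ into the compatibility relation yields a non-trivial algebraic relation between $e_2$ and $e_3$ whose $e_2$-degree and $e_3$-degree can be read off the table. Combined with Lemma \ref{Le4}, which gives $F = K(e_2, e_3)$, this produces upper bounds on $[F : K(e_2)]$ and $[F : K(e_3)]$; Lemma \ref{Lpoles} then converts these into upper bounds on the numbers of poles of $e_2$ and $e_3$. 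Since $v_\infty(e_2)$ and $v_\infty(e_3)$ are fixed by the degree set, the number of permissible additional poles in $K$ is bounded in each of the six cases.

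The third step is the pole bookkeeping. As in the $\gamma = 2$ analysis, if $e_3$ acquires a pole outside infinity at some $\alpha$ but $e_2$ does not, the relations force the right-hand side of the compatibility relation to have strictly fewer poles at $\alpha$ than its left-hand side, a contradiction; hence every pole of $e_3$ in $K$ is also a pole of $e_2$. Running through the cases with this principle one recovers exactly the divisors listed in (i)--(iv): the extreme case (iv), $\deg(S_6) = \{0,4,5,6,7,8\}$, leaves no room for poles outside infinity; the extreme case (i), $\deg(S_6) = \{0,1,2,3,4,5\}$, allows the maximum of two or three extra poles; and (ii), (iii) sit in between. Finally, since $\gamma_i = \gamma_6 = 3$ for all $i \geq 6$, Lemma \ref{Ldivisorgrowth}(ii) ensures that no new places enter the support of $D_i$ beyond those already present at $D_6$, and Lemma \ref{Ldivisorgrowth}(i) pins down the growth of the infinite part to a constant $D_{i+1} - D_i = D_6 - D_5$, so that the divisor for the full space $S = S_n$ is obtained by raising the multiplicity at $P_\infty$, recovering the formulas in the statement.

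The main obstacle will be the two ``compressed'' cases $\{0,1,2,3,4,5\}$ and $\{0,2,3,4,5,6\}$: several products $e_i e_j$ there have the same degree, so the basis of $S_6^2$ inside $E_6$ is not canonical and the compatibility relation must be derived with care to ensure it is non-trivial after the triangular change of basis. A secondary subtlety is ruling out configurations of poles in which $e_2 e_6$ would have strictly more poles at $\alpha$ than its right-hand side can supply; this requires an ultrametric argument of the sort already used at the end of the proof in Case III of Section \ref{Sgam01}. Once these are handled, the remaining four cases follow by essentially the same mechanical computation modeled by the $\gamma = 2$ example.
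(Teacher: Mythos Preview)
Your outline for cases (i)--(iii) matches the paper's own treatment: those five degree sets are indeed handled by the $\gamma=2$ template together with Lemma~\ref{Le4}, and the paper says exactly this. (One cosmetic point: you have the labels (i) and (iv) swapped relative to the statement.)

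The gap is in case (iv), $\deg(S_6)=\{0,1,2,3,4,5\}$. Your plan is to extract a compatibility relation, bound $[F:K(e_2)]$ and $[F:K(e_3)]$, and then do pole bookkeeping. That is not enough here. The bounds you will obtain, roughly $[F:K(e_2)]\le 2$ and $[F:K(e_3)]\le 4$, are \emph{compatible} with the bad divisor sequences
\[
D_i=(i-1)P_\infty+(i-1)P_\alpha \quad\text{or}\quad D_i=(i-1)P_\infty+(i-1)P_\alpha+P_\beta,
\]
in which the pole order at $\alpha$ grows linearly and $\deg(D_n)$ ends up near $2n$. Nothing in your three steps excludes these, and they are precisely what must be ruled out to get $\deg(D)\le n+2$.

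The paper handles this with two extra ingredients you have not anticipated. First, the expression for $e_5$ is written as $e_5=a\,e_3^2+(1-a)\,e_2e_4+\cdots$ and the argument splits on whether $a=0$ or $a\neq 0$ (and, when $a=0$, on whether $e_3^2\in S_2S_6$). Second, and this is the key mechanism, one does a dimension count in an auxiliary subspace such as $S'=\langle S_2S_4,e_5\rangle$ (dimension $8$, top degree $4$) to produce an element $C\in S'$ of degree $\le -3$ with a pole at $\alpha$, and then observes that $Ce_5\in S_6^2$ would have at least $5$ poles at $\alpha$ but degree $\le 1$, which is impossible. The Case~III ultrametric argument you cite is in the right spirit, but it does not by itself manufacture these auxiliary elements; the dimension-vs-degree count in a well-chosen subspace is the missing idea.
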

}

\Change{\begin{proof}We can prove (i)-(iii) using Lemma \ref{Le4} and the same ideas as shown above for $\gamma = 2$. In the following we prove the case $\deg(S_6) = \{0, 1, 2, 3, 4, 5\}$.
	
In Table \ref{TABdegsixths} the degree table of $S_6^2$ is shown. The bold numbers in the table represent a possible basis of $S_6^2$ in $E_6$. We find that $e_5$ equals $a e_3^2 + (1-a)e_2e_4 + \sum_{\substack{1 \leq i, j \leq 4 \\ 2 \leq i + j \leq 5}} a_{i, j} e_ie_j$ for some $a, a_{i, j} \in K$. We will distinguish between the cases $a \neq 0$ and $a = 0$. 

\begin{table} [h]
	\begin{center}
		\begin{tabular}{c | c | c | c | c | c | c}
						&$e_1$&$e_2$&$e_3$&$e_4$&$e_5$ & $e_6$ \\\hline
			$e_1$& $\bm0$ & $\bm1$& $\bm2$ & $\bm 3$& $4$ & $5$\\\hline
			$e_2$& - & $\bm2$ & $\bm 3$& $\bm4$& $5$ & $6$ \\\hline
			$e_3$& - & - & $\bm4$& $\bm5$& $6$ & $7$\\\hline
			$e_4$& - & - & -& $\bm6$& $\bm7$ & $8$\\\hline
			$e_5$& - & - & -& -& $\bm8$ & $\bm 9$ \\ \hline
			$e_6$& - & - & - & - & - &  $\bm {10}$
		\end{tabular}
		\caption{\Change{The degree table of $S_6^2$ when the degrees in $S_6$ equal $0, 1, 2, 3, 4, 5$. The bold numbers represent a possible basis of $S_6^2$.}}\label{TABdegsixths}
	\end{center}
\end{table}

In both cases, by analyzing the relations between the $e_i$, we find that either the lemma holds and $n+1 \leq \deg(D) \leq n + 2$ or the divisors $D_i$ take one of the following two forms:
\begin{alignat*}{3}
	D_1 &= 0 ~~~~~~~~&&D_i = (i-1)P_\infty + (i-1)P_\alpha &&\textup{ for all } 2 \leq i \leq n, \\
	D_1 &= 0 &&D_i=(i-1)P_\infty + (i-1)P_\alpha + P_\beta ~~~&&\textup{ for all } 2 \leq i \leq n.
\end{alignat*}
We will prove by contradiction that these two sets of divisors cannot occur. 

First assume $a \neq 0$, then we can replace $e_3^2$ in the basis by $e_5$. Since the dimension of $S' := \langle S_2S_4, e_5 \rangle$ is $8$ and the maximal degree occurring is $4$, we know that there must exist some element $C \in S'$ such that $\deg(C) \leq -3$. This also means that $C$ has $3$ poles, of which at most $2$ can be at $\beta$ and therefore $-v_\alpha(C) \geq 1$. We can write $C$ in the basis of $S'$ and find
$$C = c_{1, 1} + c_{1, 2}e_2 + c_{1, 3} e_3 + c_{1, 4} e_4 + c_{1, 5} e_5 + c_{2, 2}e_2^2 + c_{2, 3}e_2e_3 + c_{2, 4}e_2e_4,$$
for some $c_{i, j} \in K$. Furthermore, by possibly translating $e_6$ and $e_5$ with multiples of other basis elements, we find $e_6 = e_2e_5$. Therefore,
$$Ce_5 =  c_{1, 1}e_5 + c_{1, 2}e_2e_5 + c_{1, 3} e_3e_5 + c_{1, 4} e_4e_5 + c_{1, 5} e_5^2 + c_{2, 2}e_2e_6 + c_{2, 3}e_3e_6 + c_{2, 4}e_4e_6.$$
Hence, we have found an element in $S_6^2$ of degree at most $1$ with at least $5$ poles at $\alpha$. This is impossible. 

Next, assume $a = 0$ and $e_3^2 \in S_2S_6$. We can replace $e_3^2$ and $e_3e_4$ in the basis of $S_6^2$ by $e_6$ and $e_2e_5$ and we can choose a basis of $S$ such that $e_5 = e_2e_4$. The space $S'' := \langle S_2S_3, e_4 \rangle$ has dimension $6$ and maximal degree $3$. Thus, $S''$ contains an element $B$ such that $\deg(B) \leq -2$ and $-v_\alpha(B) \geq 0$. We can write $Be_4$ as follows
$$Be_4 = b_{1, 1}e_4 + b_{1, 2}e_2e_4 + b_{1, 3}e_3e_4 + b_{1, 4}e_4^2 + b_{2, 2}e_2e_5 + b_{2, 3}e_3e_5,$$
with $b_{i, j} \in K$. Using the relations between the $e_ie_j$ we can also find $b'_{i, j} \in K$ such that 
$$Be_4 = b'_{1, 4}e_4 + b'_{2, 4}e_2e_4 + b'_{1, 6}e_6 + b'_{2, 6}e_2e_6 + b'_{2, 5}e_2e_5 + b'_{2, 6}e_2e_6.$$
Multiplying this equation by $e_4$ again we find
$$Be_4^2 = b'_{1, 4}e_4^2 + b'_{2, 4}e_4e_5 + b'_{1, 6}e_4e_6 + b'_{2, 6}e_5e_6 + b'_{2, 5}e_5^2 + b'_{2, 6}e_5e_6.$$
Hence, we have found an element in $S_6^2$ of degree at most $4$ with at least $6$ poles at $\alpha$, which is impossible. 

Finally, assume $a = 0$ and $e_3^2 \not\in S_2S_6$. Using a relation for $e_5$ and $e_2e_5$ we can find a relation $fe_3 = g$ with $f, g \in K[e_2, e_4]$. Also looking at relations for $e_6$ and $e_2e_6$ we find $[F : K(e_4)] \leq 5$. This is a contradiction since $e_4$ has at least $6$ poles according to our current divisors. 

We conclude that it must always hold that $\deg(D) \in \{ n+1, n+2 \}$, confirming the lemma for this case. \end{proof}}

\section{Proof of Theorem \ref{Tmaxdiv}} \label{Supper}
Define $t$ to be such that $\gamma_t > \gamma_{t-1} = 0$, then by the assumption we know that \Change{$t > \Delta_{\textup{Max}}$ and $t > 3$}. Furthermore, by Theorem \ref{Tknownbasis}, we find that $e_1 = 1, e_2 = x, \ldots, e_{t-1} = x^{t-2}$ for some $x \in F$ such that $F = K(x)$. To give the proof we will need something slightly stronger than a filtered  basis with respect to $v_\infty$, which exists by the following lemma. 

\begin{tl}\label{Lsuperfilt}
	Let $K$ be an algebraically closed field and $S \subset K(x)$ an $n$-dimensional $K$-vector field such that $\min(\deg(S)) = 0$. Then it is possible to give a basis $\{e_i \mid 1 \leq i \leq n\}$ for $S$ that is filtered at $v_\infty$ such that for any $\alpha \in K$ with $m_\alpha := \min(v_\alpha(S)) < 0$ we have
	$$v_{\alpha}(e_1) \geq v_\alpha(e_2) \geq \ldots \geq v_{\alpha}(e_n) = m_\alpha.$$ 
\end{tl}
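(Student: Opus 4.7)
The plan is to use the natural filtration $S_1 \subset S_2 \subset \ldots \subset S_n$ of $S$ with respect to $v_\infty$ and to build the basis inductively, proving the stronger claim that $v_\alpha(e_i) = m_\alpha^{(i)} := \min v_\alpha(S_i)$ for every pole $\alpha \in K$ of $S$ and every $1 \leq i \leq n$. Since $m_\alpha^{(i)}$ is non-increasing in $i$ (because $S_{i-1} \subset S_i$) with $m_\alpha^{(n)} = m_\alpha$, this immediately produces the non-increasing chain $v_\alpha(e_1) \geq \ldots \geq v_\alpha(e_n) = m_\alpha$ claimed in the lemma.

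The base case $i = 1$ is automatic since $S_1$ is one-dimensional. For the inductive step, assume $e_1, \ldots, e_{i-1}$ satisfy the property and pick any $\tilde{e}_i \in S_i \setminus S_{i-1}$; my $e_i$ will have the form $\tilde{e}_i + f$ for a well-chosen $f \in S_{i-1}$. The $v_\infty$-filtration is preserved for any such $f$: every $g \in S_{i-1} \setminus \{0\}$ satisfies $v_\infty(g) > v_\infty(\tilde{e}_i)$ by the natural filtration, so $v_\infty(\tilde{e}_i + f) = v_\infty(\tilde{e}_i)$ by the ultrametric property. Only the local conditions at the finitely many poles of $S$ remain to be arranged.

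Fix a pole $\alpha$. If $m_\alpha^{(i)} < m_\alpha^{(i-1)}$, then $v_\alpha(\tilde{e}_i) = m_\alpha^{(i)}$ automatically: every $s \in S_i$ can be written $c\tilde{e}_i + g$ with $g \in S_{i-1}$, and if $v_\alpha(\tilde{e}_i) > m_\alpha^{(i)}$ one would have $v_\alpha(s) > m_\alpha^{(i)}$ for every $s$, contradicting the definition of $m_\alpha^{(i)}$; hence $v_\alpha(\tilde{e}_i + f) = m_\alpha^{(i)}$ regardless of $f$. If instead $m_\alpha^{(i)} = m_\alpha^{(i-1)}$, define the affine map $\phi_\alpha : S_{i-1} \to K$ sending $f$ to the coefficient of $(x-\alpha)^{m_\alpha^{(i)}}$ in the Laurent expansion of $\tilde{e}_i + f$ at $\alpha$. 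Its linear part sends $f$ to the analogous coefficient of $f$ alone, which is surjective onto $K$ because $S_{i-1}$ already contains elements of valuation exactly $m_\alpha^{(i-1)} = m_\alpha^{(i)}$. Hence the bad set $\phi_\alpha^{-1}(0) = \{f \in S_{i-1} : v_\alpha(\tilde{e}_i + f) > m_\alpha^{(i)}\}$ is a proper affine hyperplane of $S_{i-1}$.

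Because $S$ is finite-dimensional over $K$ it has only finitely many poles in $K$, so the total bad set is a finite union of proper affine hyperplanes in $S_{i-1}$. Since $K$ is algebraically closed it is infinite, and $\dim S_{i-1} \geq 1$ for $i \geq 2$, so this union cannot exhaust $S_{i-1}$; any $f$ outside it yields the desired $e_i$. The core of the argument is this avoidance-of-hyperplanes step, which crucially uses that $K$ is infinite; everything else reduces to routine tracking of valuations at each step.
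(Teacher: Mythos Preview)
Your proof is correct and follows essentially the same inductive strategy as the paper: adjust each $e_i$ by something of lower filtration, using the infinitude of $K$ to avoid the finitely many bad choices coming from the poles. The only difference is cosmetic---you correct by an arbitrary $f\in S_{i-1}$ and avoid a finite union of affine hyperplanes, whereas the paper corrects by a scalar multiple of $e_{i-1}$ alone and avoids finitely many scalars $a_\alpha$---and as a byproduct you obtain the slightly sharper statement $v_\alpha(e_i)=\min v_\alpha(S_i)$.
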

\begin{proof}
	Define $A := \{\alpha \in K \mid m_\alpha < 0\}$, then $A$ must be finite because the elements in the basis can only have a finite number of poles. There is some $i \geq 2$ such that for every $\alpha \in A$ we have $v_\alpha(e_1) \geq \ldots \geq v_\alpha(e_{i-1})$. We may replace $e_i$ by $e_i + a e_{i-1}$ with $a \in K$ and we will still have a filtered basis by the ultrametric property. For every $\alpha$ there is at most one $a_\alpha \in K$ such that $e_i + a_{\alpha}e_{i-1}$ has higher valuation in $v_\alpha$ than $e_{i-1}$ has. Take $a \in K$ such that $a \neq a_\alpha$ for all $\alpha \in A$ and replace $e_i$ by $e_i + ae_{i-1}$. Then we have $v_{\alpha}(e_1) \geq \ldots \geq v_{\alpha}(e_i)$ for all $\alpha \in A$. Repeating this process until $i = n$ gives a basis that meets the properties of the lemma.
\end{proof}

\begin{td}
	We call a basis meeting the criteria of Lemma \ref{Lsuperfilt} a \textit{super filtered basis} of $S$ with respect to $v_\infty$.
\end{td}

From now on we assume $\{e_1, \ldots, e_n\}$ to be a super filtered basis of $S$. We want to show that the dimension of $S_i^2$ grows roughly like the number of poles that $e_i$ has. Define $M_i$ as $\sum_{\alpha \in K} \max(-v_\alpha(e_i), 0)$, so it equals the number of poles with multiplicity that $e_i$ has outside of infinity. Note that because the basis is super filtered, we have $M_1 \leq M_2 \leq \ldots \leq M_n$ and $M_i + \deg(e_i) = \deg (D_i)$. Also let $\Delta_i := \deg(e_i) - \deg(e_{i-1})$ and $\mu_i := M_i - M_{i-1}$ for all $2 \leq i \leq n$, then we know that $t > \Delta_i + \mu_i$ for all such $i$. 
%We will prove by induction that 
%\begin{equation}\label{EQind}
%1 + M_i + \deg(e_i) \leq i + \gamma \textup{ for all } 1 \leq i \leq n.
%\end{equation}
%This shows that $\dim(L(D_i)) \leq i + \gamma$ and thus in particular $\dim(L(D)) \leq n + \gamma$. 
% Therefore, (\ref{EQind}) holds for all $i \leq t-1 $. Now we assume that (\ref{EQind}) holds for some integer $i \geq t-1$ and we will prove it also holds for $i + 1$.

\begin{tl} \label{Ltinssq}
	Let $\gamma_{t-1} = 0$ and suppose that $t > \mu_j + \Delta_j$ for all $1 \leq j \leq n$. Then, for any $t - 1 \leq i \leq n$ we have $T_{i} := L((t-2)P_\infty + D_{i}) \subset S_{i}^2$.
\end{tl}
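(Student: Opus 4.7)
I would prove the lemma by induction on $i \geq t-1$, establishing the stronger statement that the subspace $W_i := S_{t-1} \cdot S_i$, which is contained in $S_i^2$ since $S_{t-1} \subset S_i$, is equal to $T_i$. Because $\gamma_{t-1} = 0$ and $t \geq 4$, Theorem \ref{Tknownbasis}(i) gives $S_{t-1} = \mathfrak p_{t-2}$ and $D_{t-1} = (t-2)P_\infty$, hence $W_i = \sum_{j=1}^{i} \mathfrak p_{t-2}\, e_j$. The base case $i = t-1$ is then immediate: $T_{t-1} = L((2t-4)P_\infty) = \mathfrak p_{2t-4} = S_{t-1}^2 = W_{t-1}$.

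For the inductive step, I would first verify $W_i \subset T_i$: for $p \in \mathfrak p_{t-2}$ and $j \leq i$, the pole of $pe_j$ at $\infty$ has order at most $(t-2) + \deg(e_j) \leq (t-2) + \deg(e_i)$, while at each $\alpha \in K$ its pole has order at most $\max(-v_\alpha(e_j), 0) \leq \max(-v_\alpha(e_i), 0) = v_\alpha(D_i)$, by the super filtered property. To upgrade the inclusion to an equality I would compare dimensions. In genus $0$, $\dim T_i = (t-2) + \deg D_i + 1 = t-1 + \deg D_i$, while by induction $\dim W_{i-1} = t-1 + \deg D_{i-1}$. Since $W_i = W_{i-1} + \mathfrak p_{t-2}\, e_i$, the essential computation is the dimension of $W_{i-1} \cap \mathfrak p_{t-2}\, e_i = T_{i-1} \cap \mathfrak p_{t-2}\, e_i$.

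A product $pe_i$ with $p \in \mathfrak p_{t-2}$ lies in $T_{i-1}$ iff $\deg(p) \leq t-2-\Delta_i$ and $v_\alpha(p) \geq v_\alpha(D_i) - v_\alpha(D_{i-1})$ at every $\alpha \in K$ (both non-negative by the super filtered property). The vanishing conditions impose total multiplicity $\mu_i$ at distinct points of $K$ and are linearly independent on polynomials, so the space of admissible $p$ has dimension $\max(0,\, t-1-\Delta_i-\mu_i)$. The hypothesis $t > \mu_i + \Delta_i$ forces this to equal $t-1-\Delta_i-\mu_i$, which is non-negative. Therefore
\[
\dim W_i = \dim W_{i-1} + (t-1) - (t-1-\Delta_i-\mu_i) = (t-1+\deg D_{i-1}) + \Delta_i + \mu_i = t-1 + \deg D_i,
\]
matching $\dim T_i$. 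Combined with $W_i \subset T_i$, this gives $W_i = T_i \subset S_i^2$, closing the induction.

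The main obstacle, or rather the key step, is the precise dimension count of $T_{i-1} \cap \mathfrak p_{t-2}\, e_i$: the hypothesis $t > \mu_j + \Delta_j$ is exactly what is needed to guarantee that $\mathfrak p_{t-2}\, e_i$ contributes enough new elements beyond $T_{i-1}$ to fill out the whole of $T_i$. Without it, the intersection would be too large, $W_i$ would be a proper subspace of $T_i$, and the inductive step would break down.
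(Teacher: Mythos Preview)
Your proof is correct. Both you and the paper argue by induction on $i$ with the same base case $T_{t-1}=\mathfrak p_{2t-4}=S_{t-1}^2$, and both drive the inductive step using only the products in $S_{t-1}\,e_i=\mathfrak p_{t-2}\,e_i$; the difference lies in the packaging. The paper extends $T_{i-1}$ to $T_i$ constructively: it first picks $\mu_i$ linear combinations of $e_i,xe_i,\dots,x^{\mu_i-1}e_i$ whose pole counts outside infinity are $M_{i-1}+1,\dots,M_i$, thereby reaching $L((t-2)P_\infty+D_i-\Delta_iP_\infty)$, and then adjoins $x^{t-1-\Delta_i}e_i,\dots,x^{t-2}e_i$ to raise the order at infinity. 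You instead prove the slightly sharper identity $S_{t-1}S_i=T_i$ by a clean dimension count, identifying $T_{i-1}\cap\mathfrak p_{t-2}\,e_i$ with $q\cdot\mathfrak p_{t-2-\Delta_i-\mu_i}$ for a fixed polynomial $q$ of degree $\mu_i$. Your route is shorter and makes the role of the hypothesis $t>\Delta_i+\mu_i$ completely transparent (it is precisely what keeps $t-1-\Delta_i-\mu_i$ nonnegative so that the intersection has the expected dimension); the paper's route exhibits explicit witnesses for the inclusion, which matches the hands-on style of the surrounding arguments.
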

\begin{proof}
	We will show this using induction. Note that $S_{t-1}^2 = \mathfrak p_{2t-4} = T_{t-1}$ so it holds for the base case. We assume $i \geq t$ and $T_{i-1} \subset S_{i-1}^2$ and we will now prove $T_i \subset S_i^2$. 
	
	Consider the elements $\{e_1e_i, \ldots, e_{\mu_i}e_i\} = \{e_i, xe_i, \ldots, x^{\mu_i-1}e_i\}$, these elements all have the same $M_i$ poles and degree at most $$\mu_i-1 + \deg(e_i)  \leq t - 2 + \deg(e_{i-1}).$$
	%Hier gebruik je dat $\mu_i + \Delta_i < t $
	By taking linear combinations of these elements we may create $\mu_i$ distinct elements $s_1, \ldots, s_{\mu_i}$ with $M_{i-1}+1, \ldots, M_{i-1} + \mu_i - 1$ and $M_i$ poles, respectively. Furthermore, we may choose exactly which poles these elements lose in comparison to $e_i$ and can therefore assure that $s_1$ has all poles that $e_{i-1}$ has and one extra and that $s_j$ for each $j \geq 2$ has the poles $s_{j-1}$ has and one extra. From this we may conclude that
	$$\langle T_{i-1}, s_1, \ldots, s_{\mu_i} \rangle = L((t-2)P_\infty + D_i - \Delta_iP_\infty) \subset S_i^2.$$ 
	Next we consider the elements $\{e_{t-\Delta_i} e_i, \ldots, e_{t-1}e_i \}$ which have degrees $t - 1 + \deg(e_{i-1}), t + \deg(e_{i-1}), \ldots, t - 2 + \deg(e_i)$, respectively. Hence, we find that all these elements are needed for a basis of $S_i^2$ compared to $\langle T_{i-1}, s_1, \ldots, s_{\mu_i} \rangle.$  Noting furthermore that their poles outside of infinity are exactly the $M_i$ poles of $e_i$ outside of infinity, we may conclude that $T_i \subset S_i^2$, which by the principal of induction proves the lemma. 
\end{proof}
\begin{proof}[Proof of Theorem \ref{Tmaxdiv}]
Using Lemma \ref{Ltinssq} we have that $T_n = L((t-2)P_\infty + D) \subset S^2$. Now we consider the elements $\{e_{t}e_n, \ldots, e_n^2\}$. They all have distinct degrees that are at least $t-2 + \deg(e_n) + 1$ and are thus needed for a basis of $S^2$ compared to $T_n$. Hence we find that 
%$$\langle T_n, e_te_n, \ldots, e_n^2 \rangle \subset S^2 ,$$  and 
$$\dim S^2 \geq \dim T_n + n - t + 1 = \deg(D) + n.$$ 
Rearranging everything we get $$\dim(D) \leq  \dim(S^2) - n + 1 = n + \gamma,$$ 
which proves the conjecture for this specific case. 
\end{proof}
\printbibliography

@article{bachoc2018funcfieldfreiman,
	title={Towards a function field version of Freiman’s Theorem},
	volume={1},
	number={4},
	journal={Algebraic Combinatorics},
	publisher={Cellule MathDoc/CEDRAM},
	author={Bachoc, Christine and Couvreur, Alain and Zémor, Gilles},
	year={2018},
	pages={501–521} 
	}

@book{freiman1973foundations,
	author = {Freiman, G.A.},
	publisher = {American Mathematical Society},
	series = {Translations of mathematical monographs},
	title = {Foundations of a Structural Theory of Set Addition},
	year = {1973},
	}

@book{stichtenoth2009algfuncfields,
	author = {Henning Stichtenoth},
	title = {Algebraic Function Fields and Codes},
	date = {2009},
	volume = {254},
	edition = {second ed.},
	series = {Graduate Texts in Mathematics},
	publisher = {Springer-Verlag},
	location = {Berlin},
}
\emph{Mathematical Institute, Bunsenstrasse 3-5, 37073 G\"ottingen, Germany}\\
~~~ mieke.wessel@mathematik.uni-goettingen.de

\end{document}